%
%
%
%
\documentclass[preprint,review,10pt]{amsart}
\usepackage{amsmath,amssymb,amsfonts,xspace}
\newtheorem{theorem}{Theorem}[section]

\theoremstyle{definition}
\newtheorem{definition}[theorem]{Definition}
\newtheorem{example}[theorem]{Example}

\newtheorem{corollary}[theorem]{Corollary}

\theoremstyle{remark}

\numberwithin{equation}{section}



\begin{document}

\title{ some  hyperideals defined on the basis of the intersection of all $n$-ary prime hyperideals
 }

\author{M. Anbarloei}
\address{Department of Mathematics, Faculty of Sciences,
Imam Khomeini International University, Qazvin, Iran.
}

\email{m.anbarloei@sci.ikiu.ac.ir }


\subjclass[2010]{ 16Y99}


\keywords{ $N$-hyperideal, $\delta$-$N$-hyperideal, $(k,n)$-absorbing $\delta$-$N$-hyperideal, $S$-$N$-hyperideal, Krasner $(m,n)$-hyperring.}

\begin{abstract} 
Various classes of hyperideals have been studied in many papers in order to let us fully understand the structures of hyperrings in general. The purpose of this paper is the study of some  hyperideals whose concept is created on the basis of the intersection of all  $n$-ary prime hyperideals in category of Krasner $(m, n)$-hyperrings. In this regard we introduce notions of $N$-hyperideals, $\delta$-$N$-hyperideals, $(k,n)$-absorbing $\delta$-$N$-hyperideals and $S$-$N$-hyperideals. The overall
framework of these structures is then explained, providing a number of major conclusions.
\end{abstract}
\maketitle
\section{Introduction}
The notion of prime ideal as an extension 
of the notion of prime number in the ring of integers  plays a highly important role in the theory of rings. In 2017, Terkir  et al. \cite{tekir} proposed a new classe of ideals called $n$-ideals and  investigated some properties of them analogous with prime ideals. Let  $I$ be a ideal of ring $R$. $I$ refers to an $n$-ideal  if the condition $xy \in I$ with $x \notin \sqrt{0}$ implies $y \in I$ for all $x,y \in I$.
Afterward, the concept of $(2,n)$-ideals in a commutative ring was defined  by Tamekhante and Bouba \cite{Tamekkante}. They give many results to show the relations between this new notion and others that already exist. Indeed, the notion is a generalization of $n$-ideals. A proper ideal $I$ of $R$ is said to be a $(2,n)$-ideal if  $xyz \in I$ for $x,y,z \in R$  implies that $xy \in I$ or $yz \in \sqrt{0}$ or $xz \in \sqrt{0}$.

Algebraic hyperstructures arise as natural extensions of classical algebraic structures when the composition operator is multivalued. The pioneer of this theory was the French mathematician F. Marty \cite{s1}, who introduced the concept of hypergroup in 1934 on the occasion of the 8th Congress of Scandinavian Mathematicians in Stockholm. Since the 1970s, hyperstructure theory has experienced a surge of interest , when its research field was greatly extended by the introduction of other useful notions. Nowadays, hypercompositional algebra has a variety of relationships with other areas of mathematics.
One significant type of hyperrings was introduced by Krasner, when the addition is a hyperoperation, while the multiplication is an ordinary binary operation, which is said to be Krasner hyperring. An extension  of the Krasner hyperrings, which is a subclass of $(m,n)$-hyperrings, was presented  by Mirvakili and Davvaz which is called Krasner $(m,n)$-hyperring,  in \cite{d1}. Ameri and Norouzi  defined some substancial classes of hyperideals in  Krasner $(m, n)$-hyperrings  \cite{sorc1}. Later, in \cite{rev2} the concepts of $(k,n)$-absorbing hyperideals and $(k,n)$-absorbing primary hyperideals were studied by Hila et al..
Norouzi and his colleagues illustrated a new defnition for normal hyperideals in Krasner $(m,n)$-hyperrings, with respect to that one given in \cite{d1} and they  proposed in \cite{nour} that these hyperideals correspond to strongly regular relations . 
In \cite{asadi}, Asadi and Ameri showed direct limit of a direct system in the category of Krasner $(m,n)$-hyperrigs.
Ozel Ay et al. presented the idea of $\delta$-primary on Krasner hyperrings \cite{bmb4}. The notion of $\delta$-primary hyperideals in Krasner $(m,n)$-hyperrings, which unifies the prime and primary hyperideals under one frame, was defined in \cite{mah3}. For a Krasner hyperring, the concept of $n$-hyperideals was briefly introduced by Omidi et al in \cite{Omidi}.

For a commutative Krasner $(m,n)$-hyperring $H$,   $\sqrt{0}^{(m,n)}$ is the intersection is taken over all $n$-ary prime hyperideals.    
\[\sqrt{0}^{(m,n)}= \biggm{\{} a \in H \ \vert \ \biggm{\{}
\begin{array}{lr}
g(a^{(s)},1_H^{(n-s)})= 0,& s \leq n\\
g_{(l)}(a^{(s)})=0 & s>n, s=l(n-1)+1
\end{array}
\biggm{\}} \biggm{\}}\]
is an alternative definition of $\sqrt{0}^{(m,n)}$.
In this paper, we aim to analyze some notions of hyperideals established on basis of the intersection of all $n$-ary prime hyperideals in a commutative Krasner $(m,n)$-hyperring. The paper is orgnized as follows. In Section 2, we have given some basic definitions and results of commutative Krasner $(m,n)$-hyperrings which we need to develop our paper. In Section 3, we introduce the idea of $n$-ary $N$-hyperiudeals in  a commutative Krasner $(m,n)$-hyperring and give several characterizations of them. In Section 4, we extend the notion of n-ary $N$-hyperideals to $n$-ary $\delta$-$N$-hyperideals. We obtain many specific results explaining the structures. 
Afterward, in Section 5, we study an expansion of the previous concept called $(k,n)$-absorbing $\delta$-$N$-hyperideals. Some properties of them are provided. The last section is devoted for introducing the $n$-ary $S$-$N$-hyperideals.
\section{Preliminaries}
For   a non-empty set $H$, the mapping $f : H^n \longrightarrow P^*(H)$
is called an $n$-ary hyperoperation where $P^*(H)$ is the
family of all the nonempty subsets of $H$. An $n$-ary hypergroupoid is an algebraic system $(H, f)$. Let   $H_1,..., H_n$  be non-empty subsets of $H$. Then $f(H^n_1) = f(H_1,..., H_n)$ is defined by $ \bigcup \{f(a^n_1) \ \vert \ a_i \in H_i, 1 \leq i \leq n \}$.
The sequence $a_i, a_{i+1},..., a_j$ is denoted by $a^j_i$ and so we  write \[f(a_1,..., a_i, b_{i+1},..., b_j, c_{j+1},..., c_n)=f(a^i_1, b^j_{i+1}, c^n_{j+1}).\] 
Moreover, if $b_{i+1} =... = b_j = b$, then we write $f(a^i_1, b^{(j-i)}, c^n_{j+1})$. $a^j_i$ is the empty symbol if $j< i$.  
Let $f$ be an $n$-ary hyperoperation. Then $(l(n- 1) + 1)$-ary hyperoperation $f_{(l)}$ is given by 
$f_{(l)}(a_1^{l(n-1)+1}) = f(f(..., f(f(a^n _1), a_{n+1}^{2n -1}),...), a_{(l-1)(n-1)+1}^{l(n-1)+1})$. 
Recall from \cite{d1} that an algebraic hyperstructure $(H, f, g)$ , or simply $H$, is called a commutative Krasner $(m, n)$-hyperring if
\begin{itemize} 
\item[\rm(1)]~ $(H, f$) is a canonical $m$-ary hypergroup, i.e., for every $1 \leq i < j \leq n$ and all $a_1^{2n-1} \in H$, 
$f(a^{i-1}_1, f(a_i^{n+i-1}), a^{2n-1}_{n+i}) = f(a^{j-1}_1, f(a_j^{n+j-1}), a_{n+j}^{2n-1})$ and $^{-1}:H \longrightarrow H$ is a unitary operation such that
\begin{itemize}
\item[\rm(i)]~ there exists a unique $e \in H$, such that $f(a, e^{(m-1)}) = a$ for every $a \in H$ and $e^{-1}=e$;
\item[\rm(ii)]~ for each $a \in H$ there exists a unique $a^{-1} \in H$ with $e \in f(a, a^{-1}, e^{(m-2)})$;
\item[\rm(iii)]~ if $a \in f(a^m _1)$, then   $a_i \in f(a, a^{-1},..., a^{-1}_{ i-1}, a^{-1}_ {i+1},..., a^{-1}_ m)$ for all $i$,
\item[\rm(iv)]~ $f(a_1^m)=f(a_{\sigma(1)}^{\sigma(m)})$ for all $\sigma \in \mathbb{S}_n$ and for all $a_1^m \in H$,
\end{itemize}
\item[\rm(2)]~ $(H, g)$ is a $n$-ary semigroup,
\item[\rm(3)] ~ $g(a^{i-1}_1, f(x^m _1 ), a^n _{i+1}) = f(g(a^{i-1}_1, x_1, a^n_{ i+1}),..., g(a^{i-1}_1, x_m, a^n_{ i+1}))$ for every $a^{i-1}_1 , a^n_{ i+1}, x^m_ 1 \in H$, and $1 \leq i \leq n$;
\item[\rm(4)]~ $g(0, a^n _2) = g(a_2, 0, a^n _3) = ... = g(a^n_ 2, 0) = 0$ for every $a^n_ 2 \in H$,
\item[\rm(5)]~ $g(a_1^n)=g(a_{\sigma(1)}^{\sigma(n)})$ for all $\sigma \in \mathbb{S}_n$ and for all $a_1^n \in H$.
\end{itemize}
For a non-empty subset $S$ of $H$, if $(S, f, g)$ is a Krasner $(m, n)$-hyperrin, then $S$ is called a subhyperring of $H$ . Let
$I$ be a non-empty subset of $H$.  $I$ refers to a hyperideal of $(R, f, g)$ if $(I, f)$ is an $m$-ary subhypergroup of $(H, f)$ and for every $x^n _1 \in R$ and $1 \leq i \leq n$, $g(a^{i-1}_1, I, a_{i+1}^n) \subseteq I$.
\begin{definition} \cite{d1}
Suppose that $(H_1, f_1, g_1)$ and $(H_2, f_2, g_2)$ are two Krasner $(m, n)$-hyperrings. A mapping
$d : H_1 \longrightarrow H_2$ is called a homomorphism if  
\[d(f_1(a_1,..., a_m)) = f_2(d(a_1),...,d(a_m))\]
\[d(g_1(b_1,..., b_n)) = g_2(d(b_1),...,d(b_n)) \]
for all $a^m _1 \in H_1$ and $b^n_ 1 \in H_1$.
\end{definition}
Now, We recall some definitions from \cite{sorc1}. Let $I$ be a hyperideal in a commutative Krasner $(m, n)$-hyperring $R$ with
scalar identity. ${\sqrt I}^{(m,n)}$
is the intersection is taken over all prime hyperideals of $H$ which contain $I$. If the set of all prime hyperideals containing $I$ is empty, then ${\sqrt I}^{(m,n)}=H$. Theorem 4.23 in \cite{sorc1} shows that if $a \in {\sqrt I}^{(m,n)}$ then there exists $t \in \mathbb {N}$ such that $g(a^ {(t)} , 1_H^{(n-t)} ) \in I$ for $t \leq n$, or $g_{(l)} (a^ {(t)} ) \in I$ for $t = l(n-1) + 1$.
\begin{definition} 
Let  $I$ be a proper hyperideal of a commutative Krasner $(m, n)$-hyperring $H$ with the scalar identity $1_R$. $I$ refers to  a 
\begin{itemize}
\item[\rm(1)]~ maximal hyperideal of $H$ if for every hyperideal $I^{\prime}$ of $H$, $I \subseteq I^{\prime} \subseteq H$ implies that $I^{\prime}=M$ or $I^{\prime}=H$. The intersection of all maximal hyperideals of $H$ is called the Jacobson radical of  $H$ which is  denoted by $J_{(m,n)}(H)$. If $H$ does not have any maximal hyperideal, we define $J_{(m,n)}(H)=H$.
\item[\rm(2)]~ $n$-ary prime hyperideal if for hyperideals $I_1,..., I_n$ of $H$, $g(I_1^ n) \subseteq I$ implies that $I_i \subseteq I$ for some $1 \leq i \leq n$. Lemma 4.5 in \cite{sorc1} shows that if for all $a^n_ 1 \in H$, $g(a^n_ 1) \in I$ implies that $a_i \in I$ for some $1 \leq i \leq n$, then $I$ is a prime hyperideal. 
\item[\rm(2)]~ $n$-ary primary hyperideal if $g(a^n _1) \in I$ and $a_i \notin I$, then $g(a_1^{i-1}, 1_H, a_{ i+1}^n) \in {\sqrt I}^{(m,n)}$ for some $1 \leq i \leq n$. Theorem 4.28 in \cite{sorc1} shows that if $I$ is a primary hyperideal of $H$, then ${\sqrt I}^{(m,n)}$ is prime. 
\end{itemize}
\end{definition}
\begin{definition}  Let $a$ be an element in a commutative Krasner $(m,n)$-hyperring $H$. The hyperideal generated by $a$ is denoted by $<a>$ and defined by $g(H,a,1^{(n-2)})=\{g(r,a,1^{(n-2)}) \ \vert \ r \in H\}$
\end{definition}

\begin{definition}
Let $a$ be an element in a commutative Krasner $(m,n)$-hyperring $H$. It is invertible if there exists $b \in R$ with $1_H=g(a,b,1_H^{(n-2)})$. Let $U$ be a subset  of $H$. Then $U$ is invertible if and only if every element of $U$ is invertible.
\end{definition}






\section{$n$-ary $N$-hyperideals}
 We begin the section with the following definition.
\begin{definition} 
Let $I$ be a  proper hyperideal  of a commutative Krasner $(m,n)$-hyperring $H$. $I$  refers to an $n$-ary $N$-hyperideal if whenever $x_1^n \in H$ with $g(x_1^n) \in I$ and $x_i \notin \sqrt{0}^{(m,n)}$ for some $1 \leq i \leq n$ imply that $g(x_1^{i-1},1_H,x_{i+1}^n) \in I$.
\end{definition}
\begin{example}
Let $x \geq 1$. Consider $n$-ary hyperintegral domain $([x,\infty) \cup \{0\},\boxplus,\cdot)$  where $\boxplus$ is  defined by
\[
a \boxplus b=
\begin{cases}
b \boxplus a=\{a\} & \text{if $b=0$},\\
\{min\{a,b\}\} & \text{if $a \neq 0, b \neq 0$ and $a \neq b$,}\\
[a,\infty) \cup \{0\} & \text{if $a=b\neq 0$.}
\end{cases}\]
and $"\cdot"$ is the usual multiplication. The hyperideal  $0$  is the only $n$-ary $N$-hyperideal of $([x,\infty) \cup \{0\},\boxplus,\cdot)$.
\end{example}  



\begin{theorem} \label{11}
For  an $n$-ary $N$-hyperideal $I$ of a commutative Krasner $(m,n)$-hyperring $H$, $I \subseteq \sqrt{0}^{(m,n)}$.
\end{theorem}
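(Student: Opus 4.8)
The plan is to argue by contradiction, exploiting the scalar identity $1_H$ together with the defining property of an $n$-ary $N$-hyperideal. Suppose, toward a contradiction, that there exists $a \in I$ with $a \notin \sqrt{0}^{(m,n)}$. I will show that this forces $1_H \in I$, and hence $I = H$, contradicting that $I$ is a proper hyperideal.

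First I would rewrite the element $a$ using the scalar identity. Since $1_H$ is the scalar identity of $H$, we have $g(a, 1_H^{(n-1)}) = a \in I$. Now I apply the $N$-hyperideal condition to the $n$-tuple $x_1 = a$ and $x_2 = \cdots = x_n = 1_H$: here $g(x_1^n) = g(a, 1_H^{(n-1)}) = a \in I$, and the chosen coordinate $x_1 = a$ satisfies $x_1 \notin \sqrt{0}^{(m,n)}$. Thus the definition, applied with $i = 1$, yields $g(x_1^{0}, 1_H, x_2^n) = g(1_H, 1_H^{(n-1)}) = g(1_H^{(n)}) = 1_H \in I$.

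Next I would show that $1_H \in I$ collapses the hyperideal to all of $H$. For an arbitrary $r \in H$, the scalar identity gives $r = g(r, 1_H^{(n-1)})$; placing the entry $1_H \in I$ into a single slot and using the hyperideal absorption property $g(a_1^{i-1}, I, a_{i+1}^n) \subseteq I$, we obtain $r \in g(r, I, 1_H^{(n-2)}) \subseteq I$. Hence $I = H$, which contradicts the properness of $I$. Therefore no element $a \in I$ can lie outside $\sqrt{0}^{(m,n)}$, and so $I \subseteq \sqrt{0}^{(m,n)}$.

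The argument is genuinely short, so I do not anticipate a serious analytical obstacle; the only points demanding care are the bookkeeping with the scalar identity—verifying that $g(a, 1_H^{(n-1)}) = a$ and that a single occurrence of $1_H \in I$ inside $g$ is enough to absorb an arbitrary $r$—and making sure the $N$-hyperideal definition is invoked with the correct index $i = 1$. Everything else is a direct consequence of $I$ being a proper hyperideal containing the identity.
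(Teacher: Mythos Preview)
Your proof is correct and follows essentially the same route as the paper: assume some $a\in I\setminus\sqrt{0}^{(m,n)}$, write $g(a,1_H^{(n-1)})=a\in I$, apply the $N$-hyperideal condition at $i=1$ to obtain $g(1_H^{(n)})=1_H\in I$, and derive a contradiction with properness. The only difference is that you spell out explicitly why $1_H\in I$ forces $I=H$, whereas the paper leaves this implicit.
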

\begin{proof}
Let $I$ be an $n$-ary $N$-hyperideal of a commutative Krasner $(m,n)$-hyperring $H$ however $I \nsubseteq \sqrt{0}^{(m,n)}$. Assume that $x \in I$ but $x \notin \sqrt{0}^{(m,n)}$. Since  $I$ is an $n$-ary $N$-hyperideal of $H$ and $g(x,1_H^{(n-1)}) \in I$, we get $g(1_H^{(n)}) \in I$, yielding a contradiction.
 Consequently, $I \subseteq \sqrt{0}^{(m,n)}$.
\end{proof}
The next Theorem gives  a characterization of $n$-ary $N$-hyperideals.
\begin{theorem} \label{12}
Let $I$ be an $n$-ary prime hyperideal  of a commutative Krasner $(m,n)$-hyperring $H$. Then $I$ is an $n$-ary $N$-hyperideal if and only if $I =\sqrt{0}^{(m,n)}$.
\end{theorem}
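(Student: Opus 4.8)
The plan is to prove both implications by leaning on the defining property that $\sqrt{0}^{(m,n)}$ is the intersection of all $n$-ary prime hyperideals of $H$, together with the preceding Theorem \ref{11}. The whole argument reduces to a pair of short containment checks plus one index-bookkeeping observation.

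For the forward direction I would assume $I$ is simultaneously $n$-ary prime and an $n$-ary $N$-hyperideal. Theorem \ref{11} already supplies the inclusion $I \subseteq \sqrt{0}^{(m,n)}$. For the reverse inclusion, I would observe that since $I$ is itself an $n$-ary prime hyperideal, it belongs to the family of prime hyperideals whose intersection defines $\sqrt{0}^{(m,n)}$; hence $\sqrt{0}^{(m,n)} \subseteq I$. Combining the two inclusions gives $I = \sqrt{0}^{(m,n)}$, so this direction is essentially immediate.

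For the converse I would suppose $I = \sqrt{0}^{(m,n)}$, which is an $n$-ary prime hyperideal by hypothesis, and verify the $N$-hyperideal condition directly. Take $x_1^n \in H$ with $g(x_1^n) \in I$ and $x_i \notin \sqrt{0}^{(m,n)}$ for some $i$. Since $I$ is $n$-ary prime, the element-wise characterization (Lemma 4.5 of \cite{sorc1}) yields $x_j \in I$ for some $1 \leq j \leq n$. Because $x_i \notin \sqrt{0}^{(m,n)} = I$, the index $j$ is forced to differ from $i$, so the factor $x_j$ still appears among the entries of $g(x_1^{i-1}, 1_H, x_{i+1}^n)$. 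Finally, since $I$ is a hyperideal, any value of $g$ having an entry in $I$ lies in $I$; applying this with the entry $x_j \in I$ gives $g(x_1^{i-1}, 1_H, x_{i+1}^n) \in I$, which is exactly the required condition, so $I$ is an $n$-ary $N$-hyperideal.

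The argument is short, and the only point that demands care is in the converse: one must note that the ``offending'' index $i$ lying outside the radical cannot coincide with the index $j$ produced by primeness, so that substituting $1_H$ for $x_i$ leaves untouched the factor $x_j \in I$ responsible for absorption. Beyond tracking these indices correctly, I anticipate no real obstacle.
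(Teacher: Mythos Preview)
Your proposal is correct and follows essentially the same route as the paper: both directions use Theorem~\ref{11} together with the fact that any prime hyperideal contains $\sqrt{0}^{(m,n)}$ for the forward implication, and for the converse both invoke primeness to locate an index $j\neq i$ with $x_j\in I$ and then use the absorption property of hyperideals. Your write-up is, if anything, slightly more explicit about why $j\neq i$ and why this forces $g(x_1^{i-1},1_H,x_{i+1}^n)\in I$.
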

\begin{proof}
$\Longrightarrow$ Let $I$ be an $n$-ary prime hyperideal  of $H$. Clearly,  $\sqrt{0}^{(m,n)} \subseteq I$. Suppose that $I$ is an $n$-ary $N$-hyperideal of $H$. Hence we obtain $I \subseteq \sqrt{0}^{(m,n)}$, by Theorem \ref{11}.  Then we conclude that $I = \sqrt{0}^{(m,n)}$. \\
$\Longleftarrow$ Let $I = \sqrt{0}^{(m,n)}$. We presume $x_1^n \in H$ with $g(x_1^n) \in I$ such that $x_i \notin \sqrt{0}^{(m,n)}$ for some $1 \leq i \leq n$. Since   $I$ is an $n$-ary prime hyperideal  of $H$ and $x_i \notin I$, then there exists $1 \leq j \leq i-1$ or $i+1 \leq  j \leq n$ such that $x_j \in I$. This implies that $g(x_1^{i-1},1_H,x_{i+1}^n) \in I$. Thus, $I$ is an $n$-ary $N$-hyperideal of $H$.
\end{proof}
In view of Theorem \ref{12}, we have the following result.
\begin{corollary} \label{13}
Let $H$ be a commutative Krasner $(m,n)$-hyperring. Then $\sqrt{0}^{(m,n)}$ is an $n$-ary prime hyperideal of $H$ if and only if it is an $n$-ary $N$-hyperideal of $H$.
\end{corollary}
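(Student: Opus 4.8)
The plan is to obtain the two implications separately, instantiating Theorem \ref{12} at the single hyperideal $I=\sqrt{0}^{(m,n)}$ for the forward direction, and supplying a short direct argument for the converse, since Theorem \ref{12} already presupposes primeness and therefore cannot by itself produce it.

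For the forward implication I would assume that $\sqrt{0}^{(m,n)}$ is an $n$-ary prime hyperideal and put $I=\sqrt{0}^{(m,n)}$. Then $I$ is prime, so Theorem \ref{12} applies and asserts that $I$ is an $n$-ary $N$-hyperideal if and only if $I=\sqrt{0}^{(m,n)}$. The latter equality holds trivially, hence $\sqrt{0}^{(m,n)}$ is an $n$-ary $N$-hyperideal. This direction is essentially immediate.

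The converse is the step Theorem \ref{12} does not deliver directly, and here I would argue by hand. Assume $\sqrt{0}^{(m,n)}$ is an $n$-ary $N$-hyperideal; by definition it is then proper, so $1_H\notin\sqrt{0}^{(m,n)}$. To prove primeness it suffices, by the element-wise criterion (Lemma 4.5 of \cite{sorc1}), to show that $g(x_1^n)\in\sqrt{0}^{(m,n)}$ forces $x_i\in\sqrt{0}^{(m,n)}$ for some $i$. Suppose, for contradiction, that $x_i\notin\sqrt{0}^{(m,n)}$ for every $i$. Applying the $N$-hyperideal property at position $1$ gives $g(1_H,x_2^n)\in\sqrt{0}^{(m,n)}$; since the entry $x_2$ still lies outside $\sqrt{0}^{(m,n)}$, a second application at position $2$ yields $g(1_H^{(2)},x_3^n)\in\sqrt{0}^{(m,n)}$, and iterating across all $n$ coordinates produces $g(1_H^{(n)})\in\sqrt{0}^{(m,n)}$. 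As $1_H$ is the scalar identity, $g(1_H^{(n)})=1_H$, so $1_H\in\sqrt{0}^{(m,n)}$, contradicting properness. Hence some $x_i\in\sqrt{0}^{(m,n)}$, and $\sqrt{0}^{(m,n)}$ is $n$-ary prime.

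The only point requiring care is the bookkeeping in the iteration: at each stage the coordinate being replaced by $1_H$ must still lie outside $\sqrt{0}^{(m,n)}$, which holds because it is one of the original $x_i$, and the $N$-hyperideal condition is invoked exactly at that coordinate. I do not expect a genuine obstacle, since properness of $\sqrt{0}^{(m,n)}$ — automatic from its being an $N$-hyperideal — is precisely what drives the terminal contradiction.
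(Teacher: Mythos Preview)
Your proof is correct and follows essentially the same approach as the paper. The forward direction is identical (invoke Theorem~\ref{12} with $I=\sqrt{0}^{(m,n)}$), and for the converse the paper also iterates the $N$-hyperideal condition, replacing one coordinate at a time by $1_H$; your explicit contradiction framing (assuming all $x_i\notin\sqrt{0}^{(m,n)}$ and arriving at $1_H\in\sqrt{0}^{(m,n)}$) is just a slightly more detailed version of the paper's phrase ``continue the process and obtain $x_j\in\sqrt{0}^{(m,n)}$.''
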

\begin{proof}
$\Longrightarrow$ Suppose that $\sqrt{0}^{(m,n)}$ is an $n$-ary prime hyperideal of $H$.  By Theorem \ref{12}, $\sqrt{0}^{(m,n)}$ is an $n$-ary $N$-hyperideal of $H$.\\
$\Longleftarrow$ Let $\sqrt{0}^{(m,n)}$ be an $n$-ary $N$-hyperideal of $H$. Assume that $g(x_1^n) \in \sqrt{0}^{(m,n)}$ for some $x_1^n \in H$ such that $x_i \notin \sqrt{0}^{(m,n)}$ for some $1 \leq i \leq n$. Then we get  $g(x_1^{i-1},1_H,x_{i+1}^n) \in \sqrt{0}^{(m,n)}$. Since $\sqrt{0}^{(m,n)}$ is  an $n$-ary $N$-hyperideal of $H$, we can continue the process and obtain $x_j \in \sqrt{0}^{(m,n)}$ for some $1 \leq j \leq n$. This means that $\sqrt{0}^{(m,n)}$ is an $n$-ary prime hyperideal of $H$.
\end{proof}
\begin{theorem}
Let $I$ be a proper hyperideal of a commutative Krasner $(m,n)$-hyperring $H$. If every proper principal hyperideal is an $n$-ary $N$-hyperideal, then so is $I$.
\end{theorem}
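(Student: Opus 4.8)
The plan is to reduce the general hyperideal $I$ to the principal case, exploiting the fact that membership of a single element in $I$ drags along an entire principal hyperideal contained in $I$. Concretely, suppose $x_1^n \in H$ satisfy $g(x_1^n) \in I$ with $x_i \notin \sqrt{0}^{(m,n)}$ for some $1 \leq i \leq n$; I must produce $g(x_1^{i-1}, 1_H, x_{i+1}^n) \in I$, which is exactly the defining condition for $I$ to be an $n$-ary $N$-hyperideal.

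First I would form the principal hyperideal $J = \langle g(x_1^n) \rangle = \{g(r, g(x_1^n), 1_H^{(n-2)}) \mid r \in H\}$. Since $g(x_1^n) \in I$ and $I$ is a hyperideal, hence absorbs the $n$-ary multiplication $g$, every generator $g(r, g(x_1^n), 1_H^{(n-2)})$ lies in $I$, so $J \subseteq I$. Because $I$ is proper, we have $J \subseteq I \subsetneq H$, so $J$ is itself a proper principal hyperideal; by the hypothesis of the theorem, $J$ is therefore an $n$-ary $N$-hyperideal.

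Next I would observe that $g(x_1^n) \in J$: taking $r = 1_H$ and using the scalar identity together with the commutativity of $g$ gives $g(1_H, g(x_1^n), 1_H^{(n-2)}) = g(x_1^n)$. Now I apply the $N$-hyperideal property of $J$ to the tuple $x_1^n$: since $g(x_1^n) \in J$ and $x_i \notin \sqrt{0}^{(m,n)}$, we obtain $g(x_1^{i-1}, 1_H, x_{i+1}^n) \in J$. Finally, the containment $J \subseteq I$ yields $g(x_1^{i-1}, 1_H, x_{i+1}^n) \in I$, which completes the verification that $I$ is an $n$-ary $N$-hyperideal.

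I do not expect a genuine obstacle here; the argument is essentially a one-line reduction once the containment $J \subseteq I$ and the membership $g(x_1^n) \in J$ are recorded. The only points requiring minor care are the following: first, confirming that $J$ is proper, which uses the properness of $I$ rather than any deeper structural input; and second, the degenerate situation $g(x_1^n) = 0$, where $J = \{0\}$ by axiom (4), which still falls under the hypothesis since the zero hyperideal is proper, so the same conclusion follows verbatim.
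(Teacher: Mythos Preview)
Your proof is correct and follows essentially the same approach as the paper: both form the principal hyperideal $\langle g(x_1^n)\rangle$, note it is contained in $I$, apply the hypothesis to obtain $g(x_1^{i-1},1_H,x_{i+1}^n)\in \langle g(x_1^n)\rangle$, and conclude via the containment. Your version is in fact more careful than the paper's, since you explicitly justify that $\langle g(x_1^n)\rangle$ is proper and address the degenerate case $g(x_1^n)=0$.
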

\begin{proof}
Assume that $I$ is a proper hyperideal of $H$. Let $g(x_1^n) \in I$ for some $x_1^n \in H$ such that $x_i \notin \sqrt{0}^{(m,n)}$ for some $1 \leq i \leq n$. Since every proper principal hyperideal is an $n$-ary $N$-hyperideal and $g(x_1^n) \in \langle g(x_1^n) \rangle$, we get $g(x_1^{i-1},1_H,x_{i+1}^n) \in \langle g(x_1^n) \rangle \subseteq I$. This means that $I$ is a an $n$-ary $N$-hyperideal of $H$.
\end{proof}
\begin{theorem} \label{14}
Let $I$ be a proper hyperideal of a commutative Krasner $(m,n)$-hyperring $H$. Then the following statements are equivalent:
\begin{itemize} 
\item[\rm(1)]~ $I$ is an $n$-ary $N$-hyperideal of $H$.

\item[\rm(2)]~ $I=E_x$ where $E_x=\{y \in H \ \vert \ g(x,y,1_H^{(n-2)}) \in I\}$ for every $x \notin \sqrt{0}^{(m,n)}$.

\item[\rm(3)]~ $g(I_1^n) \subseteq I$ for some hyperideals $I_1^n$  of $H$ such that $I_i \cap (H-\sqrt{0}^{(m,n)}) \neq \varnothing$ for some $1 \leq i \leq n$ imply that $g(I_1^{i-1},1_H,I_{i+1}^n) \subseteq I$.
\end{itemize}
\end{theorem}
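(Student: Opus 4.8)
The plan is to prove the cycle $(1)\Rightarrow(2)\Rightarrow(3)\Rightarrow(1)$, exploiting throughout that in a Krasner $(m,n)$-hyperring the $n$-ary operation $g$ is single-valued and that $1_H$ is a scalar identity, i.e.\ $g(1_H^{(i-1)},a,1_H^{(n-i)})=a$ for every $a$ and every position.

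For $(1)\Rightarrow(2)$ I would fix $x\notin\sqrt{0}^{(m,n)}$ and show both inclusions between $I$ and $E_x=\{y\in H \mid g(x,y,1_H^{(n-2)})\in I\}$. The inclusion $I\subseteq E_x$ is pure hyperideal absorption: if $y\in I$ then $g(x,y,1_H^{(n-2)})\in I$, so $y\in E_x$. For the reverse inclusion, take $y\in E_x$, so that $g(x,y,1_H^{(n-2)})\in I$; reading this product as $g(x_1^n)$ with $x_1=x$, $x_2=y$, $x_3=\cdots=x_n=1_H$ and observing that the distinguished coordinate $x_1=x$ lies outside $\sqrt{0}^{(m,n)}$, the $n$-ary $N$-hyperideal property yields $g(1_H,y,1_H^{(n-2)})\in I$. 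Since $1_H$ is a scalar identity this last element is exactly $y$, so $y\in I$ and $E_x\subseteq I$.

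For $(2)\Rightarrow(3)$, suppose $I=E_x$ for every $x\notin\sqrt{0}^{(m,n)}$, and let $I_1,\dots,I_n$ be hyperideals with $g(I_1^n)\subseteq I$ and $I_i\cap(H-\sqrt{0}^{(m,n)})\neq\varnothing$. Choose a fixed $a_i\in I_i$ with $a_i\notin\sqrt{0}^{(m,n)}$, and let $a_j\in I_j$ be arbitrary for $j\neq i$. The key algebraic step is the rearrangement
\[
g(a_1^n)=g\bigl(a_i,\,g(a_1^{i-1},1_H,a_{i+1}^n),\,1_H^{(n-2)}\bigr),
\]
valid by commutativity and associativity of $g$ together with the scalar identity. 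Writing $w=g(a_1^{i-1},1_H,a_{i+1}^n)$, the left side lies in $g(I_1^n)\subseteq I$, so $g(a_i,w,1_H^{(n-2)})\in I$, i.e.\ $w\in E_{a_i}=I$. As the $a_j$ range over $I_j$ this gives $g(I_1^{i-1},1_H,I_{i+1}^n)\subseteq I$, which is $(3)$.

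For $(3)\Rightarrow(1)$, I would specialize $(3)$ to principal hyperideals. Given $g(x_1^n)\in I$ with $x_i\notin\sqrt{0}^{(m,n)}$, set $I_j=\langle x_j\rangle$. Using the ring-theoretic fact that the $g$-product of principal hyperideals equals the principal hyperideal of the $g$-product, namely $g(\langle x_1\rangle,\dots,\langle x_n\rangle)=\langle g(x_1^n)\rangle\subseteq I$, and noting $x_i\in\langle x_i\rangle\cap(H-\sqrt{0}^{(m,n)})$, statement $(3)$ gives $g(\langle x_1\rangle,\dots,1_H,\dots,\langle x_n\rangle)\subseteq I$; this set contains $g(x_1^{i-1},1_H,x_{i+1}^n)$, so that element is in $I$, proving $(1)$. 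The main obstacle in the whole argument is not conceptual but bookkeeping: one must justify the displayed factorization of $g(a_1^n)$ and the identity $g(\langle x_1\rangle,\dots,\langle x_n\rangle)=\langle g(x_1^n)\rangle$ carefully from the $n$-ary axioms (commutativity, associativity, distributivity over $f$, and the scalar identity), since these are the only places where the $(m,n)$-ary structure, rather than the formal analogy with ordinary rings, actually intervenes.
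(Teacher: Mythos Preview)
Your proof is correct and follows essentially the same cycle $(1)\Rightarrow(2)\Rightarrow(3)\Rightarrow(1)$ as the paper, with the same key moves: absorption plus the $N$-hyperideal condition for $(1)\Rightarrow(2)$, the rearrangement $g(a_1^n)=g(a_i,g(a_1^{i-1},1_H,a_{i+1}^n),1_H^{(n-2)})$ together with $E_{a_i}=I$ for $(2)\Rightarrow(3)$, and specialization to principal hyperideals for $(3)\Rightarrow(1)$. The only cosmetic difference is that the paper writes $g(\langle x_1\rangle,\dots,\langle x_n\rangle)\subseteq\langle g(x_1^n)\rangle$ rather than equality, which is all that is needed.
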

\begin{proof}
$(1) \Longrightarrow (2)$ Let $I$ be an $n$-ary $N$-hyperideal of $H$. We always have $I \subseteq E_x$ for every $x \in H$.  Assume that $y \in E_x$ but $x \notin \sqrt{0}^{(m,n)}$. This implies that $g(x,y,1_H^{(n-2)}) \in I$. Since $I$ is an $n$-ary $N$-hyperideal of $H$ and $x \notin \sqrt{0}^{(m,n)}$, then $y=g(y,1_H^{(n-2)}) \in I$. Therefore, we have $I=E_x$.

$(2) \Longrightarrow (3)$ Let $g(I_1^n) \subseteq I$ for some hyperideals $I_1^n$  of $H$ such that $I_i \cap (H-\sqrt{0}^{(m,n)}) \neq \varnothing$ for some $1 \leq i \leq n$. Then we get  $x_i \in I_i$ such that $x_i \notin \sqrt{0}^{(m,n)}$. Hence, $g(I_1^{i-1},x_i, I_{I+1}^n) \subseteq I$ which implies $g(I_1^{i-1},1_H,I_{i+1}^n) \subseteq E_{x_i}$. Since $I=E_{x_i}$ for every $x_i \notin \sqrt{0}^{(m,n)}$, then we get $g(I_1^{i-1},1_H,I_{i+1}^n) \subseteq I$.

$(3) \Longrightarrow (1)$ Assume that  $g(x_1^n) \in I$ for some $x_1^n \in H$ such that $x_i \notin \sqrt{0}^{(m,n)}$ for some $1 \leq i \leq n$. We get $g(\langle x_1 \rangle,..., \langle x_n \rangle) \subseteq  \langle g(x_1^n) \rangle \subseteq I$ and $\langle x_i \rangle \nsubseteq \sqrt{0}^{(m,n)}$. Then we conclude that  $g(\langle x_1 \rangle,..., \langle x_{i-1} \rangle,1_H, \langle x_{i+1} \rangle,...,\langle x_n \rangle)\subseteq I$ which means $g(x_1^{i-1},1_H,x_{i+1}^n) \in I$. Thus, $I$ is an $n$-ary $N$-hyperideal of $H$.
\end{proof}
\begin{theorem}\label{15}
Let  $T$ be a  non-empty subset  of a commutative Krasner $(m,n)$-hyperring $H$. If $I$ is an $n$-ary $N$-hyperideal of $H$ such that $T \nsubseteq I$, then $E_T=\{x \in H \ \vert \ g(x,T,1_H^{(n-2)}) \subseteq I \}$ is an $n$-ary $N$-hyperideal of $H$.
\end{theorem}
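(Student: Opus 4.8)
The plan is to verify two things: first that $E_T$ is a proper hyperideal of $H$, and then that it satisfies the defining condition of an $n$-ary $N$-hyperideal. Throughout I use that in a Krasner $(m,n)$-hyperring the multiplication $g$ is single-valued, so each $g(x,t,1_H^{(n-2)})$ is a genuine element and $E_T$ is an honest residual-type set. I would first record that $0 \in E_T$ because $g(0,t,1_H^{(n-2)})=0 \in I$ for every $t \in T$ by axiom (4), so $E_T \neq \varnothing$. For properness, note that $1_H \in E_T$ would force $g(1_H,t,1_H^{(n-2)})=t \in I$ for all $t$, i.e. $T \subseteq I$; since $T \nsubseteq I$ by hypothesis we get $1_H \notin E_T$ and hence $E_T \neq H$. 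This is exactly where the assumption $T \nsubseteq I$ is used.

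Next I would check the hyperideal axioms for $E_T$, which are routine. For the $m$-ary subhypergroup part, if $y_1,\dots,y_m \in E_T$ and $z \in f(y_1^m)$, then for each $t \in T$ the distributivity axiom (3) gives $g(z,t,1_H^{(n-2)}) \in f(g(y_1,t,1_H^{(n-2)}),\dots,g(y_m,t,1_H^{(n-2)})) \subseteq I$, since every $g(y_j,t,1_H^{(n-2)})$ lies in $I$ and $(I,f)$ is closed; closure under inverses is handled the same way. For absorption, if $x \in E_T$ and $r_1^n \in H$, then rearranging $g(g(r_1^{i-1},x,r_{i+1}^n),t,1_H^{(n-2)})$ by commutativity and associativity so that the factor $g(x,t,1_H^{(n-2)}) \in I$ is exposed, and using that $I$ is a hyperideal, shows $g(r_1^{i-1},x,r_{i+1}^n) \subseteq E_T$.

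The substantive step is the $N$-hyperideal condition. Suppose $g(x_1^n) \in E_T$ with $x_i \notin \sqrt{0}^{(m,n)}$, and set $y=g(x_1^{i-1},1_H,x_{i+1}^n)$; the goal is $y \in E_T$. The key algebraic identity is $g(x_1^n)=g(y,x_i,1_H^{(n-2)})$, which follows from the associativity of the $n$-ary semigroup $(H,g)$, the commutativity axiom (5), and the fact that $1_H$ is a scalar identity, so that the auxiliary copies of $1_H$ collapse. Fix $t \in T$. Since $g(x_1^n) \in E_T$ we have $g(g(x_1^n),t,1_H^{(n-2)}) \in I$, and rewriting the left side via the identity above and regrouping yields $g(g(y,t,1_H^{(n-2)}),x_i,1_H^{(n-2)}) \in I$. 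Writing $w=g(y,t,1_H^{(n-2)})$, this reads $g(x_i,w,1_H^{(n-2)}) \in I$ after a commutation. Now I apply the hypothesis that $I$ is an $n$-ary $N$-hyperideal to the product $g(x_i,w,1_H^{(n-2)}) \in I$ with $x_i \notin \sqrt{0}^{(m,n)}$: this gives $g(1_H,w,1_H^{(n-2)})=w \in I$. Thus $g(y,t,1_H^{(n-2)}) \in I$ for every $t \in T$, i.e. $y \in E_T$, which is precisely the $N$-hyperideal condition for $E_T$.

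The main obstacle is essentially bookkeeping: justifying the rearrangements of the iterated product $g_{(2)}$ cleanly. I would isolate the identity $g(x_1^n)=g(y,x_i,1_H^{(n-2)})$ and the regrouping $g(g(x_1^n),t,1_H^{(n-2)})=g(w,x_i,1_H^{(n-2)})$ as the two places where the general associativity of $n$-ary semigroups, commutativity, and the collapsing of $1_H$-entries must be invoked carefully; once these are in hand, the final application of the $N$-hyperideal property of $I$ with the factor $x_i \notin \sqrt{0}^{(m,n)}$ closes the argument at once.
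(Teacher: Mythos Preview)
Your proof is correct and follows essentially the same route as the paper: rewrite $g(g(x_1^n),t,1_H^{(n-2)})$ to isolate the factor $x_i$, then apply the $N$-hyperideal property of $I$ to conclude $g(x_1^{i-1},t,x_{i+1}^n)\in I$, hence $g(x_1^{i-1},1_H,x_{i+1}^n)\in E_T$. You are more explicit than the paper in verifying that $E_T$ is a proper hyperideal and in justifying the associativity/commutativity bookkeeping, but the substantive argument is identical.
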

\begin{proof}
Clealy, $E_T \neq H$. Assume that $g(x_1^n) \in E_T$ for some $x_1^n \in H$ such that $x_i \notin \sqrt{0}^{(m,n)}$ for some $1 \leq i \leq n$. This means that $g(g(x_1^n),t,1_H^{(n-2)}) \in I$ for each $t \in T$ and so $g(x_i,g(x_1^{i-1},t,x_{i+1}^n),1_H^{(n-2)}) \in I$. Since $I$ is an $n$-ary $N$-hyperideal of $H$ and $x_i \notin \sqrt{0}^{(m,n)}$, we get $g(g(x_1^{i-1},1_H,x_{i+1}^n),t,1_H^{(n-2)})=g(x_1^{i-1},t,x_{i+1}^n) \in I$ which means $g(x_1^{i-1},1_H,x_{i+1}^n) \in E_T$. Consequently, $E_T$ is an $n$-ary $N$-hyperideal of $H$.
\end{proof}
\begin{theorem}\label{16}
For a maximal $N$-hyperideal $I$ of a commutative Krasner $(m,n)$-hyperring $H$, $I=\sqrt{0}^{(m,n)}$. 
\end{theorem}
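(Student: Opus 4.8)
The plan is to combine Theorem \ref{11} with the quotient-type construction of Theorem \ref{15}. By Theorem \ref{11} every $n$-ary $N$-hyperideal lies inside $\sqrt{0}^{(m,n)}$, so in particular $I \subseteq \sqrt{0}^{(m,n)}$; it therefore suffices to prove the reverse inclusion $\sqrt{0}^{(m,n)} \subseteq I$. I would argue by contradiction: assume there is some $x \in \sqrt{0}^{(m,n)} \setminus I$ and produce an $n$-ary $N$-hyperideal strictly larger than $I$, contradicting the maximality of $I$ among $N$-hyperideals.

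The device producing the larger $N$-hyperideal is the set $E_T$ of Theorem \ref{15}. The naive choice $T=\{x\}$ need not suffice, since $E_{\{x\}}=\{y \in H \ \vert \ g(y,x,1_H^{(n-2)}) \in I\}$ may coincide with $I$. To repair this I would exploit that $x \in \sqrt{0}^{(m,n)}$ is ``nilpotent'': writing $x^{[1]}=x$ and $x^{[t+1]}=g(x,x^{[t]},1_H^{(n-2)})$ for the successive hyper-powers of $x$, the description of $\sqrt{0}^{(m,n)}$ recalled in the introduction gives $x^{[t]}=0 \in I$ for some $t$. Since $I$ absorbs products, $x^{[t]} \in I$ forces $x^{[t+1]} \in I$, so $\{t \geq 1 \ \vert \ x^{[t]} \notin I\}$ is a finite initial segment of $\mathbb{N}$ containing $1$; let $j$ be its largest element, so that $x^{[j]} \notin I$ while $x^{[j+1]}=g(x,x^{[j]},1_H^{(n-2)}) \in I$.

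Now I would apply Theorem \ref{15} with $T=\{x^{[j]}\}$: because $x^{[j]} \notin I$ we have $T \nsubseteq I$, so $E_T=\{y \in H \ \vert \ g(y,x^{[j]},1_H^{(n-2)}) \in I\}$ is an $n$-ary $N$-hyperideal of $H$. One always has $I \subseteq E_T$ (again because $I$ absorbs products), and $x \in E_T$ precisely because $g(x,x^{[j]},1_H^{(n-2)})=x^{[j+1]} \in I$; since $x \notin I$ this inclusion is strict. This contradicts the maximality of $I$ among $N$-hyperideals, forcing $\sqrt{0}^{(m,n)} \subseteq I$ and hence $I=\sqrt{0}^{(m,n)}$.

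The step I expect to require the most care is the bookkeeping with hyper-powers: verifying the recursion $x^{[t+1]}=g(x,x^{[t]},1_H^{(n-2)})$ from the $n$-ary associativity and the scalar identity, so that the two formats $g(x^{(s)},1_H^{(n-s)})$ and $g_{(l)}(x^{(s)})$ appearing in the definition of $\sqrt{0}^{(m,n)}$ glue into one ascending sequence of powers that eventually reaches $0$. It is worth noting that the conclusion is consistent with Corollary \ref{13}: once $I=\sqrt{0}^{(m,n)}$ and $I$ is an $N$-hyperideal, $\sqrt{0}^{(m,n)}$ is itself an $N$-hyperideal and hence $n$-ary prime, so the mere existence of a maximal $N$-hyperideal already forces $\sqrt{0}^{(m,n)}$ to be prime.
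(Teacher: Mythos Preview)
Your argument is correct, and it takes a genuinely different route from the paper's. The paper does \emph{not} argue set-theoretically for the inclusion $\sqrt{0}^{(m,n)}\subseteq I$; instead it shows that $I$ is $n$-ary prime and then invokes Theorem~\ref{12}. Concretely, given $g(x_1^n)\in I$ with $x_n\notin I$, the paper applies Theorem~\ref{15} with $T=\{x_n\}$: maximality forces $E_{x_n}=I$, hence $g(x_1^{n-1},1_H)\in I$, and one peels off $x_{n-1},x_{n-2},\ldots$ in the same way to conclude some $x_j\in I$. Your approach instead exploits the nilpotency description of $\sqrt{0}^{(m,n)}$ to locate, for any $x\in\sqrt{0}^{(m,n)}\setminus I$, a ``last non-membership'' power $x^{[j]}$ and then applies Theorem~\ref{15} with $T=\{x^{[j]}\}$ to manufacture a strictly larger $N$-hyperideal. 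What you gain is a direct equality $I=\sqrt{0}^{(m,n)}$ without passing through primality or Theorem~\ref{12}; what the paper gains is that primality of $I$ drops out as an explicit intermediate step (which you recover only a posteriori via Corollary~\ref{13}). The bookkeeping you flag is indeed the only delicate point, and it is handled correctly: once some power of $x$ equals $0\in I$, the absorption property of hyperideals makes $\{t:x^{[t]}\notin I\}$ a nonempty finite initial segment, so the maximal index $j$ exists.
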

\begin{proof}
Suppose that $I$ is a maximal $N$-hyperideal of $H$. Let us consider $g(x_1^n) \in I$ for some $x_1^n \in H$.  We may assume that $x_n \notin I$. By Theorem \ref{15}, we conclude that $E_{x_n}$ is an $n$-ary $N$-hyperideal of $H$ with $I \subseteq E_{x_n}$. Since  $I$ is a maximal $N$-hyperideal of $H$, we have $E_{x_n}=I$ and so $g(x_1^{n-1},1_H) \in I$. Now, we assume that $x_{n-1} \notin I$.  In a similar way, we get $g(x_1^{n-2},1_H^{(2)}) \in I$. By continuing the argument, we obtain $x_1 \in I$ which implies $I$ is an $n$-ary prime hyperideal of $H$. Then, we conclude that $I=\sqrt{0}^{(m,n)}$, by Theorem \ref{12}.
\end{proof}
The next result characterizes hyperrings admitting $n$-ary $N$-hyperideals.
\begin{theorem} \label{17}
Let $H$ be a commutative Krasner $(m,n)$-hyperring. Then $H$ admits an $n$-ary $N$-hyperideal if and only if $\sqrt{0}^{(m,n)}$ is an $n$-ary prime hyperideal of $H$. 
\end{theorem}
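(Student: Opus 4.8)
The plan is to deduce both implications from Corollary \ref{13}, which says that $\sqrt{0}^{(m,n)}$ is $n$-ary prime exactly when it is an $n$-ary $N$-hyperideal, together with the properness built into the definition of an $N$-hyperideal. The reverse implication is then immediate: if $\sqrt{0}^{(m,n)}$ is an $n$-ary prime hyperideal of $H$, it is in particular proper, so Corollary \ref{13} shows it is itself an $n$-ary $N$-hyperideal, and hence $H$ admits one.

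For the forward implication, suppose $H$ admits an $n$-ary $N$-hyperideal $I$; recall $I$ is proper and, by Theorem \ref{11}, $I \subseteq \sqrt{0}^{(m,n)}$. To show $\sqrt{0}^{(m,n)}$ is $n$-ary prime I would use the element-wise criterion (Lemma 4.5 of \cite{sorc1}): it suffices to check that $g(x_1^n) \in \sqrt{0}^{(m,n)}$ forces $x_i \in \sqrt{0}^{(m,n)}$ for some $i$. So I argue by contradiction and assume $x_j \notin \sqrt{0}^{(m,n)}$ for every $1 \leq j \leq n$.

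Since $g(x_1^n) \in \sqrt{0}^{(m,n)}$, the displayed description of the radical gives a vanishing $(m,n)$-power, i.e. $g((g(x_1^n))^{(s)},1_H^{(n-s)})=0$ for some $s \leq n$, or $g_{(l)}((g(x_1^n))^{(s)})=0$ for $s=l(n-1)+1>n$. Using the distributivity axiom (3) together with the commutativity (5) and associativity of the iterated operation, I would rewrite this vanishing power as a single nested $g$-product in which each $x_j$ occurs exactly $s$ times; in particular $0$ is expressed as a $g_{(l')}$-product all of whose arguments are among $x_1,\dots,x_n$. Because $0 \in I$ and $\sqrt{0}^{(m,n)}$ is closed under radicals, no power of any $x_j$ lies in $\sqrt{0}^{(m,n)}$, so every argument of this product is outside $\sqrt{0}^{(m,n)}$. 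I would then strip the factors off one at a time: re-bracketing so that a chosen $x_j$-argument sits at the outer level $g(\,\cdot\,)$, the $N$-hyperideal property of $I$ replaces that argument by $1_H$ while keeping the whole product inside $I$. Iterating until every $x_j$-occurrence has been replaced leaves $g(1_H^{(n)})=1_H \in I$, contradicting the properness of $I$. Hence some $x_i \in \sqrt{0}^{(m,n)}$, and $\sqrt{0}^{(m,n)}$ is $n$-ary prime.

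The routine ingredients are the two passages through Corollary \ref{13}, the containment $I\subseteq\sqrt{0}^{(m,n)}$ from Theorem \ref{11}, and the fact that $\sqrt{0}^{(m,n)}$ is a radical hyperideal (so a power of an element lies in it only if the element does). The step I expect to be the main obstacle is the combinatorial bookkeeping of the $(m,n)$-ary powers: making precise, via axioms (3) and (5) and the definition of $g_{(l)}$, how the vanishing power of $g(x_1^n)$ factors into a nested product purely in the $x_j$, and then justifying each removal by re-bracketing the current element of $I$ into the form $g(z_1^n)$ with an argument genuinely outside $\sqrt{0}^{(m,n)}$, so that the $N$-hyperideal property of $I$ applies at every stage down to $1_H$.
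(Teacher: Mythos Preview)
Your proposal is correct but follows a genuinely different route from the paper. The paper argues the forward direction via Zorn's lemma: it considers the family $\Sigma$ of all $n$-ary $N$-hyperideals, checks that the union of a chain in $\Sigma$ is again an $N$-hyperideal, extracts a maximal element $Q$, and then invokes Theorem~\ref{16} to conclude $Q=\sqrt{0}^{(m,n)}$ is prime. You instead argue directly from a single $N$-hyperideal $I$: assuming $g(x_1^n)\in\sqrt{0}^{(m,n)}$ with every $x_j\notin\sqrt{0}^{(m,n)}$, you use that some iterated $g$-power of $g(x_1^n)$ equals $0\in I$, then repeatedly apply the $N$-hyperideal property to strip off one $x_j$ at a time until $1_H\in I$, contradicting properness.

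Your approach is more elementary in that it avoids the Axiom of Choice and does not rely on Theorem~\ref{16}; the price is the combinatorial bookkeeping you flag, namely using $n$-ary associativity to rebracket so that a single $x_j$ sits at the outer level at each step. This works cleanly: writing the current element of $I$ as $g(x_1^{n-1},g(x_n,z_2^n))$ lets you replace $x_1,\dots,x_{n-1}$ by $1_H$ in turn, collapse to $g(x_n,z_2^n)$, remove $x_n$, and thus reduce the number of copies of $g(x_1^n)$ by one. One small correction: you do not need the distributivity axiom~(3) here, since no $f$-hyperaddition is involved; only the $n$-ary associativity of $g$ (axiom~(2)) and commutativity (axiom~(5)) are used.
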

\begin{proof}
$\Longrightarrow$ Let $I$ be an $n$-ary $N$-hyperideal of $H$ and let $\Sigma$ be the set of all $n$-ary $N$-hyperideals of $H$. Then $\Sigma \neq \varnothing$, since $I \in \Sigma$. So $\Sigma$ is a partially ordered set with respect to set inclusion relation. Now, we take the chain $I_1 \subseteq I_2 \subseteq \cdots \subseteq I_n \subseteq \cdots$ in $ \Sigma$. Put $P=\bigcup_{i=1}^\infty I_i$. We prove $P$ is an $n$-ary $N$-hyperideal of $H$. Assume that $g(x_1^n) \in P$ for some $x_1^n \in H$ such that $x_i \notin \sqrt{0}^{(m,n)}$. Then there exists $s \in \mathbb{N}$ such that $g(x_1^n) \in I_s$. Then we get $g(x_1^{i-1},1_H,x_{i+1}^n) \in I_s \subseteq P$, as $I_s$ is an $n$-ary $N$-hyperideal of $H$ and $x_i \notin \sqrt{0}^{(m,n)}$. This means that $P$ is a upper bound of the mentioned chain.  By Zorn’s lemma, there is a hyperideal $Q$ which is maximal in $\Sigma$. Hence $Q=\sqrt{0}^{(m,n)}$ is an $n$-ary prime hyperideal of $H$, by Theorem \ref{16}.\\
$\Longleftarrow$ Let $\sqrt{0}^{(m,n)}$ be an $n$-ary prime hyperideal of $H$. Then it is an $n$-ary $N$-hyperideal of $H$, by Corollary \ref{13}.
\end{proof}
\begin{corollary}\label{18}
Let  $I$ be a  hyperideal  of a commutative Krasner $(m,n)$-hyperring $H$ with $I \subseteq \sqrt{0}^{(m,n)}$. Then $I$ is an $n$-ary $N$-hyperideal of $H$ if and only if $I$ is an $n$-ary primary hyperideal of $H$. 
\end{corollary}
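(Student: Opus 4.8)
The plan rests on a single reduction followed by two nearly symmetric arguments. First I would record that, because $I \subseteq \sqrt{0}^{(m,n)}$, the radical of $I$ collapses onto that of $0$: since $\sqrt{0}^{(m,n)}$ is by definition the intersection of all $n$-ary prime hyperideals, every such prime $P$ satisfies $I \subseteq \sqrt{0}^{(m,n)} \subseteq P$, so the primes containing $I$ are exactly all primes and hence $\sqrt{I}^{(m,n)} = \sqrt{0}^{(m,n)}$. This is the key observation, because it makes the conclusion ``$\in \sqrt{I}^{(m,n)}$'' in the primary condition literally the same as the hypothesis ``$\notin \sqrt{0}^{(m,n)}$'' appearing in the $N$-hyperideal condition, so that the two notions can be matched against each other.

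For the implication that an $n$-ary $N$-hyperideal is $n$-ary primary, I would start from $g(a_1^n) \in I$ with $a_i \notin I$ and abbreviate the cofactor $c = g(a_1^{i-1}, 1_H, a_{i+1}^n)$. Using $n$-ary associativity and commutativity together with the scalar identity, I rewrite $g(a_1^n) = g(a_i, c, 1_H^{(n-2)}) \in I$. If $c \notin \sqrt{0}^{(m,n)}$, then applying the $N$-hyperideal property to the tuple $(a_i, c, 1_H^{(n-2)})$ at the slot occupied by $c$ yields $g(a_i, 1_H^{(n-1)}) = a_i \in I$, contradicting $a_i \notin I$. Hence $c \in \sqrt{0}^{(m,n)} = \sqrt{I}^{(m,n)}$, which is exactly the primary condition.

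The converse is the mirror image. Assuming $I$ primary, I take $g(x_1^n) \in I$ with $x_i \notin \sqrt{0}^{(m,n)}$ and again set $c = g(x_1^{i-1}, 1_H, x_{i+1}^n)$, so that $g(x_i, c, 1_H^{(n-2)}) = g(x_1^n) \in I$. If $c \notin I$, I would apply the primary condition to the tuple $(x_i, c, 1_H^{(n-2)})$ at the $c$-slot to get $g(x_i, 1_H^{(n-1)}) = x_i \in \sqrt{I}^{(m,n)} = \sqrt{0}^{(m,n)}$, contradicting $x_i \notin \sqrt{0}^{(m,n)}$; therefore $c \in I$, which is exactly the $N$-hyperideal condition.

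The only real obstacle is bookkeeping rather than ideas: one must justify the equality $\sqrt{I}^{(m,n)} = \sqrt{0}^{(m,n)}$ carefully from the definition of the radical, and one must be comfortable collapsing the $n$-fold product into the binary-looking form $g(a_i, c, 1_H^{(n-2)})$ and back by means of axioms (2), (5) and the scalar identity $g(a, 1_H^{(n-1)}) = a$. A secondary point to settle is the reading of the primary definition: I interpret its ``for some $1 \leq i \leq n$'' as asserting that the conclusion holds at each index whose factor lies outside $I$, which is what licenses invoking it at the designated $c$-slot above; this matches the way the $N$-hyperideal property is applied at chosen indices in the preceding theorems.
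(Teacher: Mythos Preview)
Your proof is correct and is exactly the argument the paper has in mind; the paper's own proof consists of the single word ``Straightforward.'' Your explicit identification $\sqrt{I}^{(m,n)}=\sqrt{0}^{(m,n)}$ under the hypothesis $I\subseteq\sqrt{0}^{(m,n)}$, followed by the symmetric contrapositive arguments using the rewriting $g(a_1^n)=g(a_i,c,1_H^{(n-2)})$, spells out precisely the details the author omits, and your reading of the ``for some $1\le i\le n$'' in the primary definition matches the paper's usage elsewhere.
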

\begin{proof}
Straightforward.
\end{proof}
Now, we are interested in the hyperrings over which $0$ is the only $n$-ary $N$-hyperideal of $H$.
\begin{theorem}\label{19}
Let $H$ be a commutative Krasner $(m,n)$-hyperring. Then $0$ is the only $n$-ary $N$-hyperideal of $H$ if and only if $H$ is an $n$-ary hyperintegral domain.
\end{theorem}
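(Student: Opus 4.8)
The plan is to reduce both conditions to the single statement that $\sqrt{0}^{(m,n)}=0$ together with $0$ being an $n$-ary prime hyperideal, which is exactly what it means for $H$ to be an $n$-ary hyperintegral domain. First I would record the elementary translation: applying the characterization of $n$-ary prime hyperideals with $I=0$, the hyperideal $0$ is $n$-ary prime precisely when $g(x_1^n)=0$ forces $x_i=0$ for some $1\leq i\leq n$, i.e. when $H$ has no $n$-ary zero divisors, which is the meaning of an $n$-ary hyperintegral domain. Moreover, once $0$ is $n$-ary prime it is itself one of the $n$-ary prime hyperideals whose intersection defines $\sqrt{0}^{(m,n)}$, so $\sqrt{0}^{(m,n)}\subseteq 0$ and hence $\sqrt{0}^{(m,n)}=0$. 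Thus being an $n$-ary hyperintegral domain is equivalent to the conjunction ``$\sqrt{0}^{(m,n)}=0$ and $\sqrt{0}^{(m,n)}$ is $n$-ary prime.''

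For the direction $(\Longleftarrow)$, I would assume $H$ is an $n$-ary hyperintegral domain, so $\sqrt{0}^{(m,n)}=0$ is $n$-ary prime. By Corollary~\ref{13}, since $\sqrt{0}^{(m,n)}$ is $n$-ary prime it is an $n$-ary $N$-hyperideal; hence $0$ is an $n$-ary $N$-hyperideal. Conversely, for any $n$-ary $N$-hyperideal $I$ of $H$, Theorem~\ref{11} yields $I\subseteq \sqrt{0}^{(m,n)}=0$, forcing $I=0$. Therefore $0$ is the only $n$-ary $N$-hyperideal of $H$.

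For the direction $(\Longrightarrow)$, I would assume $0$ is the only $n$-ary $N$-hyperideal of $H$. In particular $H$ admits an $n$-ary $N$-hyperideal, so Theorem~\ref{17} guarantees that $\sqrt{0}^{(m,n)}$ is an $n$-ary prime hyperideal. Then Corollary~\ref{13} makes $\sqrt{0}^{(m,n)}$ itself an $n$-ary $N$-hyperideal, and by the uniqueness hypothesis $\sqrt{0}^{(m,n)}=0$. Since $0=\sqrt{0}^{(m,n)}$ is $n$-ary prime, $H$ has no $n$-ary zero divisors, i.e. $H$ is an $n$-ary hyperintegral domain, completing the equivalence.

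The proof is essentially a bookkeeping assembly of the earlier results (Theorems~\ref{11} and~\ref{17} together with Corollary~\ref{13}), so I anticipate no serious computational obstacle. The one point that genuinely needs care is the translation between ``$H$ is an $n$-ary hyperintegral domain'' and the hyperideal-theoretic statement that $0$ is $n$-ary prime with $\sqrt{0}^{(m,n)}=0$; in particular I must justify that primeness of $0$ actually collapses the nilradical to $0$ (which holds because $0$ is then one of the primes being intersected) rather than assuming it silently. Everything else follows by invoking the containment $I\subseteq\sqrt{0}^{(m,n)}$ and the equivalence between the primeness and the $N$-property of the nilradical.
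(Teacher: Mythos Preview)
Your proof is correct and follows essentially the same route as the paper: both directions invoke Theorem~\ref{11}, Corollary~\ref{13}, and Theorem~\ref{17} in exactly the same way to pass between ``$0$ is the unique $n$-ary $N$-hyperideal'' and ``$\sqrt{0}^{(m,n)}=0$ is $n$-ary prime.'' Your version is slightly more careful in the $(\Longleftarrow)$ direction, where you explicitly verify that $0$ is an $n$-ary $N$-hyperideal (via Corollary~\ref{13}) before showing uniqueness, whereas the paper only checks uniqueness; otherwise the arguments are identical.
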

\begin{proof}
$\Longrightarrow$ Let $0$ be  the only $n$-ary $N$-hyperideal of $H$. Then by Theorem \ref{17}, $\sqrt{0}^{(m,n)}$ is an $n$-ary prime hyperideal of $H$. Therefore $\sqrt{0}^{(m,n)}$ is an $n$-ary $N$-hyperideal of $H$, by Corollary \ref{13}. Thus $0=\sqrt{0}^{(m,n)}$ is an $n$-ary prime hyperideal of $H$ which means $H$ is an $n$-ary hyperintegral domain.\\
$\Longleftarrow$ Let $H$ be an $n$-ary hyperintegral domain. Suppose that $I$ is an $n$-ary $N$-hyperideal of $H$. Then $I \subseteq \sqrt{0}^{(m,n)}$, by Theorem \ref{11}. Since $H$ is an $n$-ary hyperintegral domain, we get $0=\sqrt{0}^{(m,n)}$. Hence $I=0$ is the only $n$-ary $N$-hyperideal of $H$.
\end{proof}

Let $(H_1, f_1, g_1)$ and $(H_2, f_2, g_2)$ be two commutative Krasner $(m,n)$-hyperrings such that $1_{H_1}$ and $1_{H_2}$ be scalar identitis of $H_1$ and $H_2$, respectively. Then 
the $(m, n)$-hyperring $(H_1 \times H_2, f_1\times f_2 ,g_1 \times g_2 )$ is defined by $m$-ary hyperoperation
$f=f_1\times f_2 $ and n-ary operation $g=g_1 \times g_2$, as follows:
\[f_1 \times f_2((x_{1}, y_{1}),\cdots,(x_m,y_m)) = \{(x,y) \ \vert \ \ x \in f_1(x_1^m), y \in f_2(y_1^m) \}\]
$\hspace{1.2cm}
g_1 \times g_2 ((a_1,b_1),\cdots,(a_n,b_n)) =(g_1(a_1^n),g_2(b_1^n)) $,\\
for all $x_1^m,a_1^n \in H_1$ and $y_1^m,b_1^n \in H_2$ \cite{mah2}. 
\begin{theorem}\label{111}
Let $(H_1, f_1, g_1)$ and $(H_2, f_2, g_2)$ be two commutative Krasner $(m,n)$-hyperrings such that $1_{H_1}$ and $1_{H_2}$ be scalar identitis of $H_1$ and $H_2$, respectively. Then $H_1 \times H_2$ has no $n$-ary $N$-hyperideals.
\end{theorem}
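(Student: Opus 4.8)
The plan is to reduce everything to Theorem \ref{17}, which asserts that a commutative Krasner $(m,n)$-hyperring admits an $n$-ary $N$-hyperideal if and only if its nilradical is an $n$-ary prime hyperideal. Thus it suffices to prove that $\sqrt{0}^{(m,n)}$, computed in $H_1 \times H_2$, fails to be $n$-ary prime; by the contrapositive of Theorem \ref{17} this yields the desired conclusion that $H_1 \times H_2$ has no $n$-ary $N$-hyperideals. Throughout I assume, as is implicit in the hypothesis of scalar identities, that $0_{H_j} \neq 1_{H_j}$ for $j = 1,2$, so that both factors are nontrivial.

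To witness the failure of primality I would work with the two ``orthogonal idempotents'' of the product. Set
\[ a_1 = (1_{H_1}, 0_{H_2}), \qquad a_2 = (0_{H_1}, 1_{H_2}), \qquad a_3 = \cdots = a_n = (1_{H_1},1_{H_2}). \]
Computing componentwise via $g = g_1 \times g_2$ and using axiom (4) (any $n$-ary product containing a zero entry is zero) gives
\[ g(a_1^n) = \bigl( g_1(1_{H_1},0_{H_1},1_{H_1}^{(n-2)}),\, g_2(0_{H_2},1_{H_2},1_{H_2}^{(n-2)}) \bigr) = (0_{H_1},0_{H_2}), \]
which is the zero of $H_1 \times H_2$ and hence lies in $\sqrt{0}^{(m,n)}$.

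It then remains to check that none of $a_1, \ldots, a_n$ belongs to $\sqrt{0}^{(m,n)}$, and here I would invoke the explicit description of the nilradical recalled in the Introduction. For $a_1$ and any $s$, the componentwise formula gives $g(a_1^{(s)}, 1_{H_1\times H_2}^{(n-s)}) = (1_{H_1}, 0_{H_2})$ when $s \le n$, and $g_{(l)}(a_1^{(s)}) = (1_{H_1}, 0_{H_2})$ when $s = l(n-1)+1 > n$, since the first coordinate is a product of identities while the second always contains a zero. As $1_{H_1} \neq 0_{H_1}$, every such power is nonzero, so $a_1 \notin \sqrt{0}^{(m,n)}$; the symmetric computation handles $a_2$, and $a_3 = \cdots = a_n = 1_{H_1 \times H_2}$ is non-nilpotent because $g_{(l)}(1_{H_1\times H_2}^{(s)}) = 1_{H_1\times H_2} \neq 0$. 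By the primality criterion (Lemma 4.5 of \cite{sorc1}) this exhibits $g(a_1^n) \in \sqrt{0}^{(m,n)}$ with every $a_i \notin \sqrt{0}^{(m,n)}$, so $\sqrt{0}^{(m,n)}$ is not $n$-ary prime, finishing the argument.

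The step I expect to require the most care is confirming that the idempotent-type elements $a_1, a_2$ are genuinely non-nilpotent: this is exactly where the nontriviality of both factors enters, and it is the hyperring analogue of the classical fact that the nilradical of a nontrivial product ring is never prime. The product computation of $g(a_1^n)$ is routine given axiom (4), and the reduction through Theorem \ref{17} is immediate.
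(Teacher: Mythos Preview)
Your argument is correct. The paper, however, proceeds directly rather than via Theorem \ref{17}: it assumes a hyperideal $I_1 \times I_2$ of $H_1 \times H_2$ is an $n$-ary $N$-hyperideal, observes that your same element $g\bigl((1_{H_1},0_{H_2}),(0_{H_1},1_{H_2}),(1_{H_1},1_{H_2})^{(n-2)}\bigr) = (0_{H_1},0_{H_2})$ lies in it, and then uses the $N$-hyperideal condition twice (once with $(0_{H_1},1_{H_2}) \notin \sqrt{0}^{(m,n)}$, once with $(1_{H_1},0_{H_2}) \notin \sqrt{0}^{(m,n)}$) to force both $(1_{H_1},0_{H_2})$ and $(0_{H_1},1_{H_2})$ into the hyperideal, whence $(1_{H_1},1_{H_2}) \in f\bigl((1_{H_1},0_{H_2}),(0_{H_1},1_{H_2}),(0,0)^{(m-2)}\bigr)$ lies in it and the hyperideal is improper. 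So both proofs hinge on the very same orthogonal-idempotent computation and the same non-nilpotency checks; you package the conclusion as ``nilradical not prime, apply Theorem \ref{17}'' while the paper packages it as ``any $N$-hyperideal must contain the identity''. Your route is slightly more conceptual and makes the obstruction transparent, at the cost of relying on the Zorn's-lemma machinery behind Theorem \ref{17}; the paper's route is self-contained from the definition.
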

\begin{proof}
Let $I_1$ and $I_2$ be hyperideals of $H_1$ and $H_2$, respectively, and let   $I_1 \times I_2$ be an $n$-ary $N$-hyperideal of  $H_1 \times H_2$.  It is clear that $(1_{H_1},0_{H_2}),(0_{H_1},1_{H_2}),(1_{H_1},1_{H_2}) \notin \sqrt{0_{H_1 \times H_2}}^{(m,n)}$
  but $g_1 \times g_2((1_{H_1},0_{H_2}),(0_{H_1},1_{H_2}),(1_{H_1},1_{H_2})^{(n-2)}) \in I_1 \times I_2$. Therefore we obtain $(1_{H_1},0_{H_2})=g_1 \times g_2((1_{H_1},0_{H_2}),(1_{H_1},1_{H_2})^{(n-1)}) \in I_1 \times I_2$ and $(0_{H_1},1_{H_2})=g_1 \times g_2((1_{H_1},1_{H_2}),(0_{H_1},1_{H_2}),(1_{H_1},1_{H_2})^{(n-2)}) \in I_1 \times I_2$. This implies that $(1_{H_1},1_{H_2})=f_1 \times f_2((1_{H_1},0_{H_2}),(0_{H_1},1_{H_2}),(0_{H_1},0_{H_2})^{(m-2)})) \in I_1 \times I_2$ which means $I_1 \times I_2 =H_1 \times H_2$.
\end{proof}
\begin{theorem} \label{112}
Suppose that $(H_1,f_1,g_1)$ and $(H_2,f_2,g_2)$ are two commutative Krasner $(m,n)$-hyperrings and $ h:H_1 \longrightarrow H_2$ is a homomorphism. Then:

$(1)$ If $h$ is a monomorphism and $I_2$ is an $n$-ary $N$-hyperideal of $H_2$, then $h^{-1} (I_2)$ is an $n$-ary $N$-hyperideal of $H_1$.

$(2)$ Let $h$ be an epimorphism and $I_1$ be a hyperideal of $H_1$ with $Ker(h) \subseteq I_1$. If $I_1$ is an $n$-ary $N$-hyperideal of $H_1$, then $h(I_1)$ is an $n$-ary $N$-hyperideal of $H_2$.
\end{theorem}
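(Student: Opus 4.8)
The plan is to verify the defining property of an $n$-ary $N$-hyperideal directly in each case, transporting both the membership hypothesis $g(\cdots)\in I$ and the relevant non-membership in the nilradical across $h$ by means of the homomorphism identities $h(g_1(\cdots))=g_2(h(\cdots))$ and $h(f_1(\cdots))=f_2(h(\cdots))$, together with $h(0_{H_1})=0_{H_2}$ and $h(1_{H_1})=1_{H_2}$. The last identity holds automatically when $h$ is an epimorphism, since $g_2(h(x),h(1_{H_1})^{(n-1)})=h(x)$ for every $x\in H_1$ forces $h(1_{H_1})$ to be a scalar identity of $H_2$, and I take it as part of the structure-preserving data in the monomorphism case. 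Throughout I would use the characterization of the nilradical recorded in the preliminaries: $a\in\sqrt{0}^{(m,n)}$ exactly when $g(a^{(s)},1_H^{(n-s)})=0$ for some $s\le n$, or $g_{(l)}(a^{(s)})=0$ for some $s=l(n-1)+1>n$.

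For part (1), I would first note that $h^{-1}(I_2)$ is a proper hyperideal of $H_1$, properness following since $1_{H_1}\in h^{-1}(I_2)$ would give $1_{H_2}=h(1_{H_1})\in I_2$. Then I would take $g_1(x_1^n)\in h^{-1}(I_2)$ with $x_i\notin\sqrt{0_{H_1}}^{(m,n)}$ for some $i$, and apply $h$ to get $g_2(h(x_1),\dots,h(x_n))=h(g_1(x_1^n))\in I_2$. The crux is to show $h(x_i)\notin\sqrt{0_{H_2}}^{(m,n)}$, which I would argue by contraposition through the radical characterization: if $h(x_i)\in\sqrt{0_{H_2}}^{(m,n)}$, then (say) $g_2(h(x_i)^{(s)},1_{H_2}^{(n-s)})=0_{H_2}$, which equals $h(g_1(x_i^{(s)},1_{H_1}^{(n-s)}))=h(0_{H_1})$, whence injectivity of $h$ yields $g_1(x_i^{(s)},1_{H_1}^{(n-s)})=0_{H_1}$, i.e. $x_i\in\sqrt{0_{H_1}}^{(m,n)}$, a contradiction (the iterated case with $g_{(l)}$ is identical). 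Since $I_2$ is an $n$-ary $N$-hyperideal this gives $g_2(h(x_1),\dots,1_{H_2},\dots,h(x_n))\in I_2$, and rewriting the left side as $h(g_1(x_1^{i-1},1_{H_1},x_{i+1}^n))$ shows $g_1(x_1^{i-1},1_{H_1},x_{i+1}^n)\in h^{-1}(I_2)$.

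For part (2), $h(I_1)$ is a hyperideal since $h$ is onto, and it is proper because $1_{H_2}\in h(I_1)$ would force $1_{H_1}\in I_1$ via the kernel argument below. I would take $g_2(y_1^n)\in h(I_1)$ with $y_i\notin\sqrt{0_{H_2}}^{(m,n)}$, choose preimages $x_j$ with $h(x_j)=y_j$ using surjectivity, and obtain $h(g_1(x_1^n))=g_2(y_1^n)=h(a)$ for some $a\in I_1$. The key descent step is to deduce $g_1(x_1^n)\in I_1$: from $h(g_1(x_1^n))=h(a)$ one has $0_{H_2}\in h(f_1(g_1(x_1^n),a^{-1},e^{(m-2)}))$, so some $w\in f_1(g_1(x_1^n),a^{-1},e^{(m-2)})$ lies in $Ker(h)\subseteq I_1$; reversibility in the canonical $m$-ary hypergroup then gives $g_1(x_1^n)\in f_1(w,a,e^{(m-2)})\subseteq I_1$ because $w,a\in I_1$. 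Next, exactly as in part (1), I would check $x_i\notin\sqrt{0_{H_1}}^{(m,n)}$, since $g_1(x_i^{(s)},1_{H_1}^{(n-s)})=0_{H_1}$ would yield $g_2(y_i^{(s)},1_{H_2}^{(n-s)})=0_{H_2}$, contradicting $y_i\notin\sqrt{0_{H_2}}^{(m,n)}$. The $N$-hyperideal property of $I_1$ then gives $g_1(x_1^{i-1},1_{H_1},x_{i+1}^n)\in I_1$, and applying $h$ produces $g_2(y_1^{i-1},1_{H_2},y_{i+1}^n)\in h(I_1)$.

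The two genuinely substantive points are these. In (1) the whole argument hinges on transporting non-nilpotency forward, and this is precisely where injectivity of $h$ is indispensable: without it, $h(x_i)$ could land in $\sqrt{0_{H_2}}^{(m,n)}$ even though $x_i$ lies outside $\sqrt{0_{H_1}}^{(m,n)}$. In (2) the delicate step is the descent $h(g_1(x_1^n))\in h(I_1)\Rightarrow g_1(x_1^n)\in I_1$, which fails for a general epimorphism and is exactly what the hypothesis $Ker(h)\subseteq I_1$, combined with reversibility of the hyperaddition, is there to supply. I would treat the iterated radical condition ($s>n$, using $g_{(l)}$) in parallel with the length-$\le n$ case at each occurrence, since the homomorphism identities propagate through $g_{(l)}$ verbatim.
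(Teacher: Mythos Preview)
Your proposal is correct and follows essentially the same approach as the paper's own proof: in both parts you transport the hypothesis through $h$ via the homomorphism identities, use injectivity (resp.\ the condition $Ker(h)\subseteq I_1$) to handle the nilradical (resp.\ the descent $h(g_1(x_1^n))\in h(I_1)\Rightarrow g_1(x_1^n)\in I_1$), and conclude. Your write-up is in fact more detailed than the paper's, which simply asserts ``since $h$ is a monomorphism and $x_i\notin\sqrt{0_{H_1}}^{(m,n)}$, then $h(x_i)\notin\sqrt{0_{H_2}}^{(m,n)}$'' and ``since $Ker(h)\subseteq I_1$, then we have $g_1(x_1^n)\in I_1$'' without spelling out the radical characterization or the reversibility step in the canonical hypergroup; your explicit treatment of properness and of $h(1_{H_1})=1_{H_2}$ is likewise absent from the paper.
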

\begin{proof}
$(1)$ Let $g_1(x_1^n) \in h^{-1} (I_2)$ for $x_1^n \in H_1$ such that $x_i \notin \sqrt{0_{H_1}}^{(m,n)}$ for some $1 \leq i \leq n$. Then we have $h(g_1(x_1^n))=g_2(h(x_1^n)) \in I_2$. Since $h$ is a monomorphism and $x_i \notin \sqrt{0_{H_1}}^{(m,n)}$, then $h(x_i) \notin \sqrt{0_{H_2}}^{(m,n)}$.  Since $I_2$ is a $N$-hyperideal of $H_2$, we get the result that  
\[g_2(h(x_1),...,h(x_{i-1}),1_{H_2},h(x_{i+1}),...,h(x_n))\]

$\hspace{3cm}=
h(g_1(x_1^{i-1},1_{H_1},x_{i+1}^n))$

$ \hspace{3cm}\in I_2$\\
which follows $g_1(x_1^{i-1},1_{H_1},x_{i+1}^n) \in h^{-1}(I_2)$. Thus $h^{-1}(I_2)$ is an $n$-ary $N$-hyperideal of $H_1$.

$(2)$ Let  $g_2(y_1^n) \in h(I_1)$ for some  $y_1^n \in H_2$ such that $y_i \notin \sqrt{0_{H_2}}^{(m,n)}$. Since $h$ is an epimorphism, then there exist $x_1^n \in R_1$ with $h(x_1)=y_1,...,h(x_n)=y_n$. Therefore
$h(g_1(x_1^n))=g_2(h(x_1),...,h(x_n))=g_2(y_1^n) \in h(I_1)$.
Since $Ker(h) \subseteq I_1$, then we have $g_1(x_1^n) \in I_1$. Since $y_i \notin \sqrt{0_{H_2}}^{(m,n)}$, then $x_i \notin \sqrt{0_{H_1}}^{(m,n)}$. Since $I_1$ is an n-ary  $N$-hyperideal of $R_1$ and $x_i \notin \sqrt{0_{H_1}}^{(m,n)}$, we obtain $g_1(x_1^{i-1},1_{H_1},x_{i+1}^n) \in I_1$ which means 
\[h(g_1(x_1^{i-1},1_{H_1},x_{i+1}^n))
=g_2(h(x_1),...,h(x_{i-1}),1_{H_2},h(x_{i+1}),...,h(x_n))\]
$\hspace{4.5cm}=g_2(y_1^{i-1},1_{H_2},y_{i+1}^n) \in h(I_1)$\\
Consequently,  $h(I_1)$ is an $n$-ary $N$-hyperideal of $H_2$.
\end{proof}
\begin{corollary}
Let $H^{\prime}$ be a subhyperring of a commutative Krasner $(m, n)$-hyperring $H$. If $I$ is an $n$-ary $N$-hyperideal of $H$ such that $H^{\prime} \nsubseteq I$, then $H^{\prime} \cap I$ is an $n$-ary $N$-hyperideal of $H^{\prime}.$
\end{corollary}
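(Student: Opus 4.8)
The plan is to realize $H^{\prime} \cap I$ as the preimage of $I$ under the natural inclusion map and then invoke Theorem \ref{112}(1), so that essentially no new computation is required. First I would let $h : H^{\prime} \longrightarrow H$ be the inclusion $h(x)=x$. Because $H^{\prime}$ is a subhyperring, the hyperoperation $f$ and the operation $g$ on $H^{\prime}$ are just the restrictions of those on $H$; hence $h$ respects both $f$ and $g$ and is a homomorphism in the sense of the homomorphism definition, and it is manifestly injective, so $h$ is a monomorphism. Moreover $h^{-1}(I)=\{x \in H^{\prime} \ \vert \ x \in I\}=H^{\prime} \cap I$.

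Next I would check properness. Theorem \ref{112}(1) produces an $n$-ary $N$-hyperideal, but an $N$-hyperideal is by definition a \emph{proper} hyperideal, so I must confirm $H^{\prime} \cap I \neq H^{\prime}$. This holds precisely because $H^{\prime} \nsubseteq I$, which is exactly the hypothesis of the corollary; this is the one place where the assumption $H^{\prime} \nsubseteq I$ is used. With $h$ a monomorphism and $I$ an $n$-ary $N$-hyperideal of $H$, Theorem \ref{112}(1) then yields that $h^{-1}(I)=H^{\prime} \cap I$ is an $n$-ary $N$-hyperideal of $H^{\prime}$, which completes the proof.

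The main obstacle, hidden inside the appeal to Theorem \ref{112}(1), is the comparison of the two nilradicals: one needs that for $x \in H^{\prime}$, $x \notin \sqrt{0_{H^{\prime}}}^{(m,n)}$ implies $x \notin \sqrt{0_{H}}^{(m,n)}$, equivalently $\sqrt{0_{H}}^{(m,n)} \cap H^{\prime} \subseteq \sqrt{0_{H^{\prime}}}^{(m,n)}$. This follows from the alternative description of the nilradical recalled in the introduction: if $x \in \sqrt{0_{H}}^{(m,n)}$ then $g(x^{(t)},1_H^{(n-t)})=0$ for some $t \leq n$, or $g_{(l)}(x^{(t)})=0$ with $t=l(n-1)+1$, and since $x \in H^{\prime}$ and both $g$ and the scalar identity ($1_{H^{\prime}}=1_H$) and the zero are inherited by $H^{\prime}$, the very same identity witnesses $x \in \sqrt{0_{H^{\prime}}}^{(m,n)}$.

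Two remarks would round out the argument. At the outset I would note that the statement presupposes $1_H \in H^{\prime}$, so that $H^{\prime}$ is itself a commutative Krasner $(m,n)$-hyperring with scalar identity and the notion of $n$-ary $N$-hyperideal of $H^{\prime}$ is meaningful. And I would point out that the corollary can alternatively be proved directly: take $x_1^n \in H^{\prime}$ with $g(x_1^n) \in H^{\prime} \cap I$ and some $x_i \notin \sqrt{0_{H^{\prime}}}^{(m,n)}$, use the nilradical comparison to get $x_i \notin \sqrt{0_{H}}^{(m,n)}$, apply the $N$-property of $I$ in $H$ to obtain $g(x_1^{i-1},1_H,x_{i+1}^n) \in I$, and observe that this element lies in $H^{\prime}$ as well, hence in $H^{\prime} \cap I$. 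This direct route avoids Theorem \ref{112} but merely reproduces its argument, so the inclusion-map proof is preferable for brevity.
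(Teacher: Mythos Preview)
Your proof is correct and follows essentially the same approach as the paper: consider the inclusion map $H^{\prime}\hookrightarrow H$ and apply Theorem~\ref{112}. In fact, the paper's one-line proof cites Theorem~\ref{112}(2), which is almost certainly a typo for~(1), since the inclusion is a monomorphism, not an epimorphism; your citation of part~(1) is the right one, and your added remarks on properness and the nilradical comparison make explicit exactly the points the paper leaves implicit.
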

\begin{proof}
Let us consider the identity map $j$ from $H^{\prime}$ into $H$. We conclude that $H^{\prime} \cap I$ is an $n$-ary $N$-hyperideal of $H^{\prime}$, by Theorem \ref{112} (2).
\end{proof}
Let $J$ be a hyperideal of a commutative Krasner $(m, n)$-hyperring $(H, f, g)$. Then the set 
\[H/J = \{f(a^{i-1}_1, J, a^m_{i+1}) \ \vert \ a^{i-1}_1,a^m_{i+1} \in H \}\]
endowed with m-ary hyperoperation $f$ which for all $a_{11}^{1m},...,a_{m1}^{mm} \in H$

\[f(f(a_{11}^{1 (i-1)},J, a^{1m}_ {1(i+1)}),..., f(a_{m1}^{ m(i-1)}, J, a^{mm}_ {m(i+1)}))\] 
$\hspace{3.cm}= f (f(a^{m1}_{11}),..., f(a^{m(i-1)}_{1(i-1)}), J, f(a^{m(i+1)}_{1(i+1)} ),..., f(a^{mm}_ {1m}))$\\
and with $n$-ary hyperoperation g which for all $a_{11}^{1m},...,a_{n1}^{nm} \in H$
\[g(f(a_{11}^{1 (i-1)}, J, a^{1m}_ {1(i+1)}),..., f(a_{n1}^{ n(i-1)}, J, a^{nm}_ {n(i+1)}))\] 

$\hspace{3.cm}= f (g(a^{n1}_{11}),..., g(a^{n(i-1)}_{1(i-1)}), J, g(a^{n(i+1)}_{1(i+1)} ),..., f(a^{nm}_ {1m}))$\\
construct a commutative Krasner $(m, n)$-hyperring, and $(H/J, f, g)$ is said to be the quotient Krasner $(m, n)$-hyperring of $H$ by $J$ \cite{sorc1}.

Now, we determine when the hyperideal $I/J$ is n-ary $N$-hyperideal in $H/J$.
\begin{theorem} \label{113}
Let $I$ and $J$ be two hyperideals of a commutative Krasner $(m,n)$-hyperring $H$ with $J \subseteq I$. If $I$ is an $n$-ary $N$-hyperideal of $H$, then $I/J$ is an $n$-ary $N$-hyperideal of $H/I$. 
\end{theorem}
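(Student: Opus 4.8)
The plan is to deduce the statement from the homomorphism result Theorem \ref{112}(2), applied to the canonical projection onto the quotient, rather than reverifying the $N$-hyperideal axiom by hand inside the quotient. (I read the target hyperring as $H/J$: since $J\subseteq I$, the object $I/J$ is a genuine hyperideal of $H/J$, whereas $I/I$ would be trivial.)

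First I would introduce the natural epimorphism $\pi\colon H\longrightarrow H/J$ carrying each $a\in H$ to its coset $\bar a$. From the quotient construction recalled immediately before the statement, $\pi$ is a surjective homomorphism of commutative Krasner $(m,n)$-hyperrings with $\mathrm{Ker}(\pi)=J$; moreover $\pi$ sends the scalar identity $1_H$ to $1_{H/J}$ and $\pi(I)=I/J$. Using the hypothesis $J\subseteq I$, this gives $\mathrm{Ker}(\pi)=J\subseteq I$, so the pair $(\pi,I)$ meets exactly the hypotheses of Theorem \ref{112}(2).

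Next I would invoke Theorem \ref{112}(2) directly: with $h=\pi$, $H_1=H$, $H_2=H/J$, and $I_1=I$ an $n$-ary $N$-hyperideal of $H$ whose kernel is contained in it, the theorem yields that $\pi(I)=I/J$ is an $n$-ary $N$-hyperideal of $H/J$, which is the claim. Properness of $I/J$ comes for free from properness of $I$ together with $J\subseteq I$.

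The only real content — and the step I expect to be the main obstacle — is confirming that $\pi$ genuinely satisfies the hypotheses of Theorem \ref{112}(2): that it is an epimorphism with kernel exactly $J$ and that $\pi(I)=I/J$ under the coset description used in the quotient. Should a self-contained argument be preferred, these same two points resurface explicitly. One takes $\bar g(\bar x_1^n)\in I/J$ with $\bar x_i\notin\sqrt{0_{H/J}}^{(m,n)}$, observes $\bar g(\bar x_1^n)=\overline{g(x_1^n)}$ so that $g(x_1^n)\in I$ (here $J\subseteq I$ is used), and notes that $x_i\in\sqrt{0_H}^{(m,n)}$ would force $\bar x_i\in\sqrt{0_{H/J}}^{(m,n)}$, since a homomorphism sends a vanishing power to a vanishing power; hence $x_i\notin\sqrt{0_H}^{(m,n)}$. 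Applying the $N$-hyperideal property of $I$ in $H$ then gives $g(x_1^{i-1},1_H,x_{i+1}^n)\in I$, and passing to cosets delivers $\bar g(\bar x_1^{i-1},1_{H/J},\bar x_{i+1}^n)\in I/J$, as required.
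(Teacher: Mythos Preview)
Your proposal is correct and follows essentially the same approach as the paper: the paper's proof simply introduces the projection $\theta\colon H\to H/J$, $a\mapsto f(a,J,0^{(m-2)})$, and then invokes Theorem~\ref{112}(2), exactly as you do. Your reading of the target hyperring as $H/J$ (rather than the typo $H/I$) matches the paper's own proof, and your additional self-contained verification is a bonus not present in the original.
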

\begin{proof}
Consider the projection map of $H$ of $J$, that is, $\theta: H \longrightarrow H/J$, defined by $a \longrightarrow f(a,I,0^{(m-2)})$.  By using Theorem \ref{112} (2), we are done. 
\end{proof}
\begin{theorem} \label{114}
Let $I$ and $J$ be two hyperideals of a commutative Krasner $(m,n)$-hyperring $H$ with $J \subseteq I$. If $I/J$ is an $n$-ary $N$-hyperideal of $H/J$ with $J \subseteq \sqrt{0_H}^{(m,n)}$, then $I$ is an $n$-ary $N$-hyperideal of $H$. 
\end{theorem}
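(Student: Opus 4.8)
The plan is to pass to the quotient through the canonical projection $\theta : H \longrightarrow H/J$, $\theta(a) = f(a, J, 0^{(m-2)})$, which is an epimorphism of Krasner $(m,n)$-hyperrings with kernel $J$, and then to transport the $N$-hyperideal property of $I/J$ back down to $I$. Writing $\overline{a} := \theta(a)$, I would begin with an arbitrary relation $g(x_1^n) \in I$ in which $x_i \notin \sqrt{0_H}^{(m,n)}$ for some $1 \leq i \leq n$, and apply $\theta$ to get $g(\overline{x_1},\ldots,\overline{x_n}) = \overline{g(x_1^n)} \in I/J$. To invoke the hypothesis that $I/J$ is an $n$-ary $N$-hyperideal of $H/J$, the missing ingredient is the assertion that $\overline{x_i} \notin \sqrt{0_{H/J}}^{(m,n)}$, and establishing this is exactly the place where $J \subseteq \sqrt{0_H}^{(m,n)}$ is used.

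The heart of the argument is therefore the comparison of nilradicals: I would prove that $a \notin \sqrt{0_H}^{(m,n)}$ forces $\overline{a} \notin \sqrt{0_{H/J}}^{(m,n)}$. By the alternative description of the nilradical recalled in the introduction, $\overline{a} \in \sqrt{0_{H/J}}^{(m,n)}$ means $g(\overline{a}^{(s)}, \overline{1}^{(n-s)}) = \overline{0}$ for some $s \leq n$, or the corresponding $g_{(l)}(\overline{a}^{(s)}) = \overline{0}$ for $s > n$; in either case this translates, through $\theta$, to $g(a^{(s)}, 1_H^{(n-s)}) \in J$ (respectively $g_{(l)}(a^{(s)}) \in J$). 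Since $J \subseteq \sqrt{0_H}^{(m,n)}$, this power of $a$ lies in every $n$-ary prime hyperideal $P$ of $H$; because each such $P$ is proper and $n$-ary prime while the remaining arguments equal $1_H \notin P$, $n$-ary primeness forces $a \in P$, and hence $a \in \bigcap P = \sqrt{0_H}^{(m,n)}$. Taking the contrapositive yields precisely $\overline{x_i} \notin \sqrt{0_{H/J}}^{(m,n)}$.

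With this in hand the proof closes routinely. Since $I/J$ is an $n$-ary $N$-hyperideal, $g(\overline{x_1},\ldots,\overline{x_n}) \in I/J$ together with $\overline{x_i} \notin \sqrt{0_{H/J}}^{(m,n)}$ gives $g(\overline{x_1},\ldots,\overline{x_{i-1}},\overline{1},\overline{x_{i+1}},\ldots,\overline{x_n}) \in I/J$, that is, $\overline{g(x_1^{i-1}, 1_H, x_{i+1}^n)} \in I/J$. Because $J \subseteq I$, the coset correspondence $\theta^{-1}(I/J) = I$ holds, so this membership is equivalent to $g(x_1^{i-1}, 1_H, x_{i+1}^n) \in I$, which is the defining condition for $I$; properness of $I$ is inherited from that of $I/J$, since $I = H$ would make $I/J = H/J$ improper. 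I expect the nilradical comparison of the second paragraph to be the only genuine obstacle, everything else being transport along $\theta$; the subtlety it addresses is that without the assumption $J \subseteq \sqrt{0_H}^{(m,n)}$ an element could acquire nilpotence in $H/J$ without being nilpotent in $H$, which would destroy the implication $\overline{x_i} \notin \sqrt{0_{H/J}}^{(m,n)}$ and hence the whole argument.
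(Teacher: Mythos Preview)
Your proposal is correct and follows essentially the same route as the paper: pass to the quotient via the projection, use the $N$-hyperideal property of $I/J$, and pull the conclusion back to $I$. The only difference is that you spell out in detail why $x_i \notin \sqrt{0_H}^{(m,n)}$ implies $\overline{x_i} \notin \sqrt{0_{H/J}}^{(m,n)}$ (the place where $J \subseteq \sqrt{0_H}^{(m,n)}$ is used), whereas the paper simply asserts this with ``Note that $f(x_i,J,0_H^{(m-2)}) \notin \sqrt{0_{H/J}}^{(m,n)}$''.
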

\begin{proof} Let  $g(x_1^n) \in I$ for $x_1^n \in H$ such that $x_i \notin \sqrt{0_H}^{(m,n)}$ for some $1 \leq i \leq n$. Then we get 
$g(f(x_1,J,0_H^{(m-2)}),\cdots,f(x_n,J,0_H^{(m-2)}))=f(g(x_1^n),J,0_{H/J}^{(m-2)}) \in I/J$. Note that $f(x_i,J,0_H^{(m-2)}) \notin \sqrt{0_{H/J}}^{(m,n)}$. Since $I/J$ is an $n$-ary $N$-hyperideal of $H/J$ and $f(x_i,J,0_H^{(m-2)}) \notin \sqrt{0_{H/J}}^{(m,n)}$, we get the result that \[g(f(x_1,J,0_H^{(m-2)}),\cdots,f(x_{i-1},J,0_H^{(m-2)}),1_{H/J},f(x_{i+1},J,0_H^{(m-2)}),\]
$\hspace{2cm} \cdots,f(x_n,J,0_H^{(m-2)})) \in I/J$\\
which implies $f(g(x_1^{i-1},1_H,x_{i+1}^n),J,0_{H/J}^{(m-2)}) \in I/J$. Therefore $g(x_1^{i-1},1_H,x_{i+1}^n) \in I$.
Thus $I$ is an n-ary $N$-hyperideal of $H$.
\end{proof}
Using Theorem \ref{11} and Theorem \ref{114}, we have the next corollary.
\begin{corollary} \label{115}
Let $I$ and $J$ be two hyperideals of a commutative Krasner $(m,n)$-hyperring $H$ with $J \subseteq I$. If $I/J$ is an $n$-ary $N$-hyperideal of $H/J$ such that $J$ is an $n$-ary $N$-hyperideal of $H$, then $I$ is an $n$-ary $N$-hyperideal of $H$. 
\end{corollary}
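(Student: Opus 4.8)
The plan is to recognize that this corollary is a direct specialization of Theorem \ref{114}, obtained by replacing the hypothesis $J \subseteq \sqrt{0_H}^{(m,n)}$ with the stronger assumption that $J$ is itself an $n$-ary $N$-hyperideal of $H$. The key observation is that Theorem \ref{11} already records that every $n$-ary $N$-hyperideal is contained in $\sqrt{0}^{(m,n)}$, so the new hypothesis automatically supplies the radical containment that Theorem \ref{114} requires.

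Concretely, I would proceed as follows. First, since $J$ is assumed to be an $n$-ary $N$-hyperideal of $H$, I would invoke Theorem \ref{11} applied to $J$ to conclude immediately that $J \subseteq \sqrt{0_H}^{(m,n)}$. This is the only nontrivial input: it converts the combinatorial/arithmetic hypothesis that $J$ is an $N$-hyperideal into the precise inclusion needed downstream.

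Next, I would observe that all the hypotheses of Theorem \ref{114} are now in force: $I$ and $J$ are hyperideals of $H$ with $J \subseteq I$, the quotient $I/J$ is an $n$-ary $N$-hyperideal of $H/J$ by assumption, and $J \subseteq \sqrt{0_H}^{(m,n)}$ by the previous step. Applying Theorem \ref{114} verbatim then yields that $I$ is an $n$-ary $N$-hyperideal of $H$, which is exactly the desired conclusion.

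I do not expect any genuine obstacle here, since the argument is a clean chaining of two earlier results; the corollary is essentially a repackaging of Theorem \ref{114} under a more natural hypothesis. The only point demanding any care is conceptual rather than technical, namely noticing that ``$J$ is an $N$-hyperideal'' is strictly stronger than ``$J \subseteq \sqrt{0_H}^{(m,n)}$'' and hence strictly sufficient; there are no hyperoperation computations to carry out, which is why a one-line invocation of Theorems \ref{11} and \ref{114} suffices.
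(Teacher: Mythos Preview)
Your proposal is correct and matches the paper's own proof exactly: the paper simply states that the result follows from Theorem \ref{11} and Theorem \ref{114}. Your explanation of why these two results chain together is precisely the intended argument.
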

\begin{proof}
The proof follows from Theorem \ref{11} and Theorem \ref{114}.
\end{proof}
\section{$n$-ary $\delta$-$N$-hyperideals}
Recall from  \cite{mah3} that a function $\delta$ is a hyperideal expansion of a commutative Krasner $(m,n)$-hyperring $H$ if it assigns to each hyperideal $I$ of $H$ a hyperideal $\delta(I)$ of $H$ such that 
 $I \subseteq \delta(I)$ and 
 if $I \subseteq J$ for any hyperideals $I, J$ of $H$, then $\delta (I) \subseteq \delta (J)$. For instance, $\delta_0(I)=I$, $\delta_1(I)=\sqrt{I}^{(m,n)}$ and $\delta_H(I)=H$ for all hyperideals $I$ of $H$ are  hyperideal expansions of $H$. Moreover, $\delta_q(I/J)=\delta(I)/J$  for expansion function $\delta$ of $H$ and  for all hyperideals $I$ of $H$ containing hyperideal $J$ is an hyperideal expansion of $H$. By using a hyperideal expansion $\delta$ of $H$, we present the following definition.
\begin{definition}
Assume that  $\delta$ is a hyperideal expansion of a commutative Krasner $(m,n)$-hyperring $H$. A proper hyperideal $I$ of $H$ is said to be an $n$-ary $\delta$-$N$-hyperideal if  $g(x_1^n) \in I$ for $x_1^n \in H$ and $x_i \notin \sqrt{0}^{(m,n)}$ for some $1 \leq i \leq n$ imply that $g(x_1^{i-1},1_H,x_{i+1}^n) \in \delta(I).$
\end{definition}
\begin{example} \label{classes}
Assume that $\mathbb{Z}_{12}=\{0,1,2,3,\cdots,11\}$ is the set of all congruence classes of integers modulo $12$ and $\mathbb{Z}^{\star}_{12}=\{1,5,7,11\}$ is multiplicative subgroup of units $\mathbb{Z}_{12}$. Construct $H$ as $\mathbb{Z}_{12}/\mathbb{Z}_{12}^{\star}$. Then we have  $H=\{\bar{0},\bar{1},\bar{2},\bar{3},\bar{4},\bar{6}\}$ in which $\bar{0}=\{0\}$, $\bar{1}=\{1,5,7,11\}$, $\bar{2}=\bar{10}=\{2,10\}$, $\bar{3}=\bar{9}=\{3,9\}$,  $\bar{4}=\bar{8}=\{4,8\}$, $\bar{6}=\{6\}$. Consider Krasner hyperring $(H,\boxplus,\circ)$ that for all $\bar{x},\bar{y} \in H$, $\bar{x} \circ \bar{y}=\overline{xy}$ and  2-ary hyperoperation $\boxplus$ is defined as follows

\[\begin{tabular}{|c|c|c|c|c|c|c|} 
\hline $\boxplus$ & $\bar{0}$ & $\bar{1}$ & $\bar{2}$ & $\bar{3}$ & $\bar{4}$ & $\bar{6}$
\\ \hline  $\bar{0}$ & $\bar{0}$ & $\bar{1}$ & $\bar{2}$ & $\bar{3}$ & $\bar{4}$ & $\bar{6}$
\\ \hline $\bar{1}$ & $\bar{1}$ & $\bar{0},\bar{2},\bar{\bar{4}},\bar{6}$ & $\bar{1},\bar{3}$ & $\bar{2},\bar{4}$ & $\bar{1},\bar{3}$ & $\bar{1}$
\\ \hline $\bar{2}$ & $\bar{2}$ & $\bar{1},\bar{3}$ & $\bar{0},\bar{4}$ & $\bar{1}$ & $\bar{2},\bar{6}$ & $\bar{4}$
\\ \hline $\bar{3}$ & $\bar{3}$ & $\bar{2},\bar{4}$ & $\bar{1}$ & $\bar{0},\bar{6}$ & $\bar{1}$ & $\bar{3}$
\\ \hline $\bar{4}$ & $\bar{4}$ & $\bar{1},\bar{3}$ & $\bar{2},\bar{6}$ &  $\bar{1}$ & $\bar{0},\bar{4}$ & $\bar{2}$
\\ \hline $\bar{6}$ & $\bar{6}$ & $\bar{1}$ & $\bar{4}$ & $\bar{3}$ & $\bar{2}$ & $\bar{0}$
\\ \hline
\end{tabular}\]

In the hyperring, $\sqrt{0}^{(2,2)}=\{\bar{0},\bar{6}\}$ and $I=\{\bar{0},\bar{2},\bar{4},\bar{6}\}$ is a 2-ary $\delta_1$-$N$-hyperring.
\end{example}

\begin{theorem} \label{21}
Let $I$ be an $n$-ary $\delta$-primary hyperideal of a commutative Krasner $(m,n)$-hyperring $H$ such that $I \subseteq \sqrt{0}^{(m,n)}$. Then $I$ is an $n$-ary $\delta$-$N$-hyperideal of $H$.
\end{theorem}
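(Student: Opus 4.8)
The plan is to exploit the fact that the only difference between the defining conditions of an $n$-ary $\delta$-primary hyperideal and an $n$-ary $\delta$-$N$-hyperideal lies in which set the distinguished element $x_i$ is required to avoid. Recall that $I$ is $\delta$-primary when $g(x_1^n) \in I$ together with $x_i \notin I$ forces $g(x_1^{i-1}, 1_H, x_{i+1}^n) \in \delta(I)$, whereas $I$ is a $\delta$-$N$-hyperideal when the same conclusion is required to follow from $g(x_1^n) \in I$ together with the condition $x_i \notin \sqrt{0}^{(m,n)}$. The hypothesis $I \subseteq \sqrt{0}^{(m,n)}$ is precisely what lets us pass from the second condition to the first.

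First I would take $x_1^n \in H$ with $g(x_1^n) \in I$ and $x_i \notin \sqrt{0}^{(m,n)}$ for some $1 \leq i \leq n$, these being exactly the hypotheses of the $\delta$-$N$ condition that must be verified. The central step is to apply the contrapositive of the inclusion $I \subseteq \sqrt{0}^{(m,n)}$: were $x_i$ to lie in $I$, it would also lie in $\sqrt{0}^{(m,n)}$, contradicting $x_i \notin \sqrt{0}^{(m,n)}$; hence $x_i \notin I$. Having secured $x_i \notin I$, I would invoke the $\delta$-primary property of $I$ directly to obtain $g(x_1^{i-1}, 1_H, x_{i+1}^n) \in \delta(I)$, which is exactly the conclusion demanded by the definition of an $n$-ary $\delta$-$N$-hyperideal.

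There is no genuine obstacle here; the whole argument collapses to the single observation that the inclusion $I \subseteq \sqrt{0}^{(m,n)}$ makes the premise $x_i \notin \sqrt{0}^{(m,n)}$ logically stronger than the premise $x_i \notin I$, so that any tuple satisfying the $\delta$-$N$ hypothesis automatically satisfies the $\delta$-primary hypothesis and inherits its conclusion. The only remaining point to record for completeness is that $I$ is proper, which is already part of the assumption that $I$ is an $n$-ary $\delta$-primary hyperideal, so the resulting $\delta$-$N$-hyperideal is proper as the definition requires.
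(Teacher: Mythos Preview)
Your proposal is correct and follows exactly the same route as the paper's proof: from $x_i \notin \sqrt{0}^{(m,n)}$ and $I \subseteq \sqrt{0}^{(m,n)}$ you deduce $x_i \notin I$, then apply the $\delta$-primary property to obtain $g(x_1^{i-1},1_H,x_{i+1}^n) \in \delta(I)$. The only difference is that you spell out the contrapositive step more explicitly than the paper, which compresses it to the phrase ``By the assumption, $x_i \notin I$.''
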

\begin{proof}
Let $g(x_1^n) \in I$ for $x_1^n \in H$ such that $x_i \notin \sqrt{0}^{(m,n)}$ for some $1 \leq i \leq n$. By the assumption, $x_i \notin I$. Since $I$ is an $n$-ary $\delta$-primary hyperideal of $H$, we conclude that $g(x_1^{i-1},1_H,x_{i+1}^n) \in \delta(I)$. Thus $I$ is an $n$-ary $\delta$-$N$-hyperideal of $H$.
\end{proof}
The next Theorem shows that the inverse of Theorem \ref{21} is true if $I=\sqrt{0}^{(m,n)}$.
\begin{theorem}\label{22}
Let $\sqrt{0}^{(m,n)}$ be an $n$-ary $\delta$-$N$-hyperideal of a commutative Krasner $(m,n)$-hyperring $H$. Then $\sqrt{0}^{(m,n)}$ is an $n$-ary $\delta$-primary hyperideal of $H$.
\end{theorem}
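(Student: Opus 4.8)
The plan is to unwind the two definitions side by side and observe that, for the particular hyperideal $I=\sqrt{0}^{(m,n)}$, the defining implication of an $n$-ary $\delta$-$N$-hyperideal coincides verbatim with the defining implication of an $n$-ary $\delta$-primary hyperideal. Recall that $I$ is $n$-ary $\delta$-primary when $g(x_1^n)\in I$ together with $x_i\notin I$ forces $g(x_1^{i-1},1_H,x_{i+1}^n)\in\delta(I)$, whereas $I$ is an $n$-ary $\delta$-$N$-hyperideal when $g(x_1^n)\in I$ together with $x_i\notin\sqrt{0}^{(m,n)}$ forces the same membership $g(x_1^{i-1},1_H,x_{i+1}^n)\in\delta(I)$. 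The only difference between the two hypotheses is whether the excluded set for $x_i$ is $I$ or $\sqrt{0}^{(m,n)}$.

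First I would take arbitrary $x_1^n\in H$ with $g(x_1^n)\in\sqrt{0}^{(m,n)}$ and suppose that $x_i\notin\sqrt{0}^{(m,n)}$ for some $1\leq i\leq n$; the goal is to derive $g(x_1^{i-1},1_H,x_{i+1}^n)\in\delta(\sqrt{0}^{(m,n)})$, which is exactly what is needed for $\sqrt{0}^{(m,n)}$ to be $n$-ary $\delta$-primary. The crucial observation is that because here $I=\sqrt{0}^{(m,n)}$, the hypothesis $x_i\notin\sqrt{0}^{(m,n)}$ is literally the hypothesis $x_i\notin I$ of the $\delta$-primary condition. Hence the assumption that $\sqrt{0}^{(m,n)}$ is an $n$-ary $\delta$-$N$-hyperideal applies directly and yields $g(x_1^{i-1},1_H,x_{i+1}^n)\in\delta(\sqrt{0}^{(m,n)})$. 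Since $x_1^n$ and the index $i$ were arbitrary, this establishes the $\delta$-primary property; properness of $\sqrt{0}^{(m,n)}$ is already guaranteed, as it is built into the definition of a $\delta$-$N$-hyperideal, so no separate check is required.

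I expect no real obstacle here: the entire content is the bookkeeping identification $x_i\notin I\iff x_i\notin\sqrt{0}^{(m,n)}$, which holds precisely because $I$ is taken to be $\sqrt{0}^{(m,n)}$ itself. This is exactly the coincidence that fails for a general hyperideal $I$, and it explains why the converse of Theorem \ref{21} can only be asserted in the special case $I=\sqrt{0}^{(m,n)}$, rather than for an arbitrary $\delta$-$N$-hyperideal contained in $\sqrt{0}^{(m,n)}$.
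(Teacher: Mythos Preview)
Your proof is correct and follows essentially the same approach as the paper: both take $g(x_1^n)\in\sqrt{0}^{(m,n)}$ with $x_i\notin\sqrt{0}^{(m,n)}$ and observe that the $\delta$-$N$-hyperideal hypothesis immediately yields $g(x_1^{i-1},1_H,x_{i+1}^n)\in\delta(\sqrt{0}^{(m,n)})$, which is exactly the $\delta$-primary condition since here $I=\sqrt{0}^{(m,n)}$. Your additional commentary on why the coincidence $x_i\notin I\iff x_i\notin\sqrt{0}^{(m,n)}$ is special to this case is accurate and clarifies the relation to Theorem~\ref{21}, but the underlying argument is identical to the paper's.
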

\begin{proof}
Let $g(x_1^n) \in \sqrt{0}^{(m,n)}$ for some $x_1^n \in H$ such that $x_i \notin \sqrt{0}^{(m,n)}$ for some $1 \leq i \leq n$. Since $\sqrt{0}^{(m,n)}$ is an $n$-ary $\delta$-$N$-hyperideal of $H$, we get the result that $g(x_1^{i-1},1_H,x_{i+1}^n) \in \delta(\sqrt{0}^{(m,n)})$, as needed.
\end{proof}

\begin{theorem} \label{23}
Let $I$ be a proper hyperideal of a commutative Krasner $(m,n)$-hyperring $H$. Then the followings
are equivalent:
\begin{itemize} 
\item[\rm(1)]~ $I$ is an $n$-ary $\delta$-$N$-hyperideal of $H$.

\item[\rm(2)]~ $E_x \subseteq \sqrt{0}^{(m,n)}$ for all $x \notin \delta(I)$ where $E_x=\{y \in H \ \vert \ g(x,y,1_H^{(n-2)}) \in I\}$.

\item[\rm(3)]~ If $g(x,I_1^{n-1}) \subseteq I$ for some hyperideals $I_1^{n-1}$ of $H$ and for some $x \in H$ implies $x \in \sqrt{0}^{(m,n)}$ or $g(1_H,I_1^{n-1}) \subseteq \delta(I)$.

\item[\rm(4)]~ If $g(I_1^n) \subseteq I$ for some hyperideals $I_1^{n-1}$ of $H$, then $I_i \cap (H-\sqrt{0}^{(m,n)}) = \varnothing$ for some $1 \leq i \leq n$ or $g(I_1^{i-1},1_H,I_{i+1}^n) \subseteq \delta(I)$.
\end{itemize}
\end{theorem}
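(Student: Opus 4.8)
The plan is to prove the four conditions equivalent by establishing the cycle $(1) \Rightarrow (2) \Rightarrow (3) \Rightarrow (4) \Rightarrow (1)$, mirroring the template already used for ordinary $n$-ary $N$-hyperideals in Theorem \ref{14}, but carrying $\delta(I)$ rather than $I$ through every conclusion. Throughout I would exploit three structural facts: that $g$ is a single-valued associative $n$-ary operation, that $1_H$ is a scalar identity so that $g(a,1_H^{(n-1)})=a$, and the commutativity axiom (5), which lets me move any distinguished index into a convenient slot.

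For $(1) \Rightarrow (2)$, fix $x \notin \delta(I)$ and take $y \in E_x$, so that $g(x,y,1_H^{(n-2)}) \in I$. If $y \notin \sqrt{0}^{(m,n)}$, then applying the defining property of an $n$-ary $\delta$-$N$-hyperideal to this product at the slot occupied by $y$ gives $g(x,1_H^{(n-1)}) = x \in \delta(I)$, contradicting $x \notin \delta(I)$; hence $y \in \sqrt{0}^{(m,n)}$ and $E_x \subseteq \sqrt{0}^{(m,n)}$. For $(2) \Rightarrow (3)$, assume $g(x,I_1^{n-1}) \subseteq I$ with $x \notin \sqrt{0}^{(m,n)}$, and aim to show $g(1_H,I_1^{n-1}) \subseteq \delta(I)$. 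Picking arbitrary $a_j \in I_j$, set $z = g(1_H,a_1^{n-1})$, which is a single element. The $n$-ary semigroup law together with the scalar identity yields $g(x,z,1_H^{(n-2)}) = g(x,a_1^{n-1}) \in I$, so $x \in E_z$. If $z \notin \delta(I)$, then $(2)$ would force $E_z \subseteq \sqrt{0}^{(m,n)}$ and hence $x \in \sqrt{0}^{(m,n)}$, a contradiction; therefore $z \in \delta(I)$, and letting the $a_j$ range over $I_j$ gives $g(1_H,I_1^{n-1}) \subseteq \delta(I)$.

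For $(3) \Rightarrow (4)$, suppose $g(I_1^n) \subseteq I$. If $I_i \subseteq \sqrt{0}^{(m,n)}$ for some $i$ the first alternative holds; otherwise fix an index $i$ and an element $x \in I_i$ with $x \notin \sqrt{0}^{(m,n)}$, use commutativity to write the product with $x$ pulled out as $g(x,I_1,\ldots,I_{i-1},I_{i+1},\ldots,I_n) \subseteq g(I_1^n) \subseteq I$, and apply $(3)$ to the remaining $n-1$ hyperideals to obtain $g(I_1^{i-1},1_H,I_{i+1}^n) \subseteq \delta(I)$. Finally, for $(4) \Rightarrow (1)$, given $g(x_1^n) \in I$ with $x_i \notin \sqrt{0}^{(m,n)}$, take the principal hyperideals $I_j = \langle x_j \rangle$; then $g(\langle x_1 \rangle, \ldots, \langle x_n \rangle) \subseteq \langle g(x_1^n) \rangle \subseteq I$ while $x_i \in \langle x_i \rangle$ shows $\langle x_i \rangle \cap (H - \sqrt{0}^{(m,n)}) \neq \varnothing$, so $(4)$ forces $g(\langle x_1 \rangle, \ldots, \langle x_{i-1}\rangle, 1_H, \langle x_{i+1}\rangle, \ldots, \langle x_n \rangle) \subseteq \delta(I)$, whence $g(x_1^{i-1},1_H,x_{i+1}^n) \in \delta(I)$.

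I expect the main obstacle to be bookkeeping rather than conceptual. In $(2) \Rightarrow (3)$ one must justify the rewriting $g(x, g(1_H, a_1^{n-1}), 1_H^{(n-2)}) = g(x, a_1^{n-1})$ cleanly from the $n$-ary associativity and the scalar identity, rather than waving at it; and in the two passages involving condition $(4)$ the delicate point is matching the distinguished index $i$ correctly when translating between elements and the principal hyperideals they generate, which is exactly where commutativity (axiom (5)) and the inclusion $g(\langle x_1 \rangle, \ldots, \langle x_n \rangle) \subseteq \langle g(x_1^n)\rangle$ carry the argument.
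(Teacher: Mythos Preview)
Your proposal is correct and follows essentially the same route as the paper: the same cycle $(1)\Rightarrow(2)\Rightarrow(3)\Rightarrow(4)\Rightarrow(1)$, the same use of $E_z$ with $z=g(1_H,a_1^{n-1})$ in $(2)\Rightarrow(3)$, and the same passage through principal hyperideals $\langle x_j\rangle$ in $(4)\Rightarrow(1)$. The only cosmetic difference is that you phrase $(1)\Rightarrow(2)$ and $(2)\Rightarrow(3)$ by contradiction while the paper argues the contrapositive directly, and you spell out the associativity step $g(x,g(1_H,a_1^{n-1}),1_H^{(n-2)})=g(x,a_1^{n-1})$ more explicitly than the paper does.
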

\begin{proof}
$(1) \Longrightarrow (2)$ Assume that $y \in E_x$. So $g(y,x,1_H^{(n-2)}) \in I$. Since $I$ is an $n$-ary $\delta$-$N$-hyperideal of $H$ and $x=g(x,1_H^{(n-2)}) \notin  \delta(I)$, we obtain $y \in \sqrt{0}^{(m,n)}$ which means $E_x \subseteq \sqrt{0}^{(m,n)}$.\\
$(2) \Longrightarrow (3)$ Let $g(x,I_1^{n-1}) \subseteq I$ for some hyperideals $I_1^{n-1}$ of $H$ and for some $x \in H$ such that $g(1_H,I_1^{n-1}) \nsubseteq \delta(I)$. Hence there exist $a_1 \in I_1, \cdots , a_{n-1} \in I_{n-1}$ such that $g(1_H,a_1^{n-1}) \notin \delta(I)$. Since $x \in E_{g(1_H,a_1^{n-1})}$, we conclude that $x \in \sqrt{0}^{(m,n)}$.\\
$(3) \Longrightarrow (4)$ Let $g(I_1^n) \subseteq I$ for some hyperideals $I_1^{n-1}$ of $H$ such that  $I_i \cap (H-\sqrt{0}^{(m,n)}) \neq \varnothing$ for some $1 \leq i \leq n$. Therefore we have some $x \in I_i \cap (H-\sqrt{0}^{(m,n)})$. Since $g(I_1^{i-1},x,I_{i+1}^n) \subseteq I$, we get $g(I_1^{i-1},1_H,I_{i+1}^n) \subseteq \delta(I)$, as $x \notin \sqrt{0}^{(m,n)}$.\\
$(4) \Longrightarrow (1)$ Suppose that $g(x_1^n) \in I$ for $x_1^n \in H$ such that $x_i \notin \sqrt{0}^{(m,n)}$ for some $1 \leq i \leq n$. Let us consider $I_1=\langle x_1 \rangle,\cdots, I_n=\langle x_n \rangle$. Since $I_i \cap (H-\sqrt{0}^{(m,n)}) \neq \varnothing$, we conclude that $g(x_1^{i-1},1_H,x_{i+1}^n) \in g(I_1^{i-1},1_H,I_{i+1}^n) \subseteq \delta(I)$. Thus $I$ is an $n$-ary $\delta$-$N$-hyperideal of $H$.
\end{proof}
\begin{theorem} \label{24}
Let $I$ be an $n$-ary $\delta$-$N$-hyperideal of a commutative Krasner $(m,n)$-hyperring $H$ and let $x \notin \delta(I).$ If $F_x \subseteq \delta(E_x)$ where $F_x=\{y \in H \ \vert \ g(y,x,1_H^{(n-2)}) \in \delta(I)\}$ and $E_x=\{y \in H \ \vert \ g(y,x,1_H^{(n-2)}) \in I\}$, then $E_x$ is an $n$-ary $\delta$-$N$-hyperideal of $H$.
\end{theorem}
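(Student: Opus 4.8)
\section*{Proof proposal for Theorem \ref{24}}

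The plan is to verify directly the two requirements in the definition of an $n$-ary $\delta$-$N$-hyperideal for $E_x$, after first recording that $E_x$ is a proper hyperideal of $H$. That $E_x$ is a hyperideal is routine and was already used implicitly in Theorems \ref{14} and \ref{15}. Its properness is immediate from the hypothesis $x \notin \delta(I)$: if we had $1_H \in E_x$, then $x = g(x,1_H^{(n-1)}) \in I \subseteq \delta(I)$, a contradiction, so $E_x \neq H$.

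For the main property, I would start from an arbitrary product $g(y_1^n) \in E_x$ for which some coordinate $y_j \notin \sqrt{0}^{(m,n)}$, and aim to show $g(y_1^{j-1},1_H,y_{j+1}^n) \in \delta(E_x)$. Unfolding the definition of $E_x$ gives $g(x,g(y_1^n),1_H^{(n-2)}) \in I$. The first key step is to regroup this $(2n-1)$-fold composite, using associativity and commutativity of the $n$-ary operation $g$, so as to isolate the non-nilpotent factor $y_j$: namely $g(x,g(y_1^n),1_H^{(n-2)}) = g(y_j,w,1_H^{(n-2)})$, where $w = g(x,y_1,\ldots,y_{j-1},y_{j+1},\ldots,y_n)$ collects $x$ together with the remaining $n-1$ of the $y_i$. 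Since $g(y_j,w,1_H^{(n-2)}) \in I$ and $y_j \notin \sqrt{0}^{(m,n)}$, the $\delta$-$N$-hyperideal property of $I$ applies at the distinguished coordinate $y_j$ and yields $w = g(1_H,w,1_H^{(n-2)}) \in \delta(I)$.

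The second key step is to read this back through the definition of $F_x$. Put $z = g(y_1^{j-1},1_H,y_{j+1}^n)$. Expanding $g(x,z,1_H^{(n-2)})$ and again regrouping, the interior $1_H$ of $z$ merges with the trailing identities, and the scalar-identity law $g(w,1_H^{(n-1)}) = w$ collapses the composite to precisely $w$; hence $g(z,x,1_H^{(n-2)}) = g(x,z,1_H^{(n-2)}) = w \in \delta(I)$, which is exactly the statement $z \in F_x$. Invoking the hypothesis $F_x \subseteq \delta(E_x)$ then gives $z = g(y_1^{j-1},1_H,y_{j+1}^n) \in \delta(E_x)$, completing the verification.

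The routine parts are the hyperideal and properness checks and the bookkeeping of identity elements. The step I expect to require the most care is the twofold regrouping of the composite $n$-ary products (once to expose $y_j$ in a single $g$-product lying in $I$, and once to recognize $w$ as $g(x,z,1_H^{(n-2)})$), since these rely on repeatedly using the semigroup associativity together with commutativity and the behaviour of $1_H$; it is here that one must make sure the element counts and the $n$-ary arity line up correctly.
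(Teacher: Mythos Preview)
Your proposal is correct and follows essentially the same approach as the paper: both regroup $g(g(y_1^n),x,1_H^{(n-2)})$ so as to isolate the non-nilpotent factor, apply the $\delta$-$N$-hyperideal property of $I$ to land $w=g(x,y_1^{j-1},y_{j+1}^n)$ in $\delta(I)$, and then read off $g(y_1^{j-1},1_H,y_{j+1}^n)\in F_x\subseteq\delta(E_x)$. Your version is in fact more careful, since you include the properness check for $E_x$ (which the paper omits) and spell out the second regrouping identifying $g(x,z,1_H^{(n-2)})$ with $w$.
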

\begin{proof}
Let $g(x_1^n) \in E_x$ for $x_1^n \in H$ such that $x_i \notin \sqrt{0}^{(m,n)}$ for some $1 \leq i \leq n$. Hence $g(g(x_1^n),x,1_H^{(n-2)})=g(x_i,g(x_1^{i-1},x,x_{i+1}^n),1_H^{(n-2)}) \in I$. Since $I$ is  an $n$-ary $\delta$-$N$-hyperideal of $H$, we have $g(x_1^{i-1},x,x_{i+1}^n) \in \delta(I)$. Hence $g(x_1^{i-1},1_H,x_{i+1}^n) \in F_x \subseteq \delta (E_x)$, as needed.
\end{proof}

Recall  that a hyperideal expansion $\delta$ of $H$ is intersection preserving  if it satisfies $\delta(I_1 \cap I_2)=\delta(I_1) \cap \delta(I_2)$ for each hyperideals $I_1$ and $ I_2$ of a commutative Krasner $(m,n)$-hyperring $H$. For instance, the hyperideal expansion $\delta_1$ of $H$ is intersection preserving.
\begin{theorem}\label{25}
Let $I_1^n$ be some $n$-ary $\delta$-$N$-hyperideals of a Krasner $(m,n)$-hyperring $H$ and let the hyperideal expansion $\delta$ of $H$ be intersection preserving. Then $\bigcap_{i=1}^n I_i$ is an $n$-ary $\delta$-$N$-hyperideal of $H$.
\end{theorem}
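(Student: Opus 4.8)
The plan is to verify the defining condition of an $n$-ary $\delta$-$N$-hyperideal directly for $J:=\bigcap_{i=1}^n I_i$. First I would record that $J$ is proper: since each $I_i$ is proper we have $J\subseteq I_1\subsetneq H$, so $J\neq H$. The engine of the argument is that the intersection-preserving hypothesis, stated for two hyperideals as $\delta(A\cap B)=\delta(A)\cap\delta(B)$, propagates to any finite family. So before the main step I would establish, by a short induction on the number of factors, that
\[
\delta\Bigl(\bigcap_{i=1}^n I_i\Bigr)=\bigcap_{i=1}^n \delta(I_i).
\]

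With that in hand, the core of the proof is immediate. Suppose $g(x_1^n)\in J$ for some $x_1^n\in H$ with $x_j\notin\sqrt{0}^{(m,n)}$ for some $1\leq j\leq n$. Since $J\subseteq I_i$ for every $i$, we have $g(x_1^n)\in I_i$ for each $i$. Each $I_i$ is an $n$-ary $\delta$-$N$-hyperideal of $H$ and $x_j\notin\sqrt{0}^{(m,n)}$, so the definition yields $g(x_1^{j-1},1_H,x_{j+1}^n)\in\delta(I_i)$ for every $1\leq i\leq n$. Consequently $g(x_1^{j-1},1_H,x_{j+1}^n)\in\bigcap_{i=1}^n\delta(I_i)$, and invoking the extended intersection-preserving identity above this intersection equals $\delta(J)$. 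Hence $g(x_1^{j-1},1_H,x_{j+1}^n)\in\delta\bigl(\bigcap_{i=1}^n I_i\bigr)$, which is exactly the condition required to conclude that $J$ is an $n$-ary $\delta$-$N$-hyperideal of $H$.

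There is no serious obstacle in this argument; the only nonautomatic point is the passage from the two-factor intersection-preservation to the $n$-factor version, and that is a routine induction using associativity of intersection together with $\delta(A\cap B)=\delta(A)\cap\delta(B)$ applied to $A=I_1$ and $B=\bigcap_{i=2}^n I_i$. I would therefore spell out only the inductive step briefly and otherwise let the definitional chase carry the proof. One should also note that the intersection-preserving hypothesis is genuinely used: without it, the $g(x_1^{j-1},1_H,x_{j+1}^n)$ would only be known to lie in each $\delta(I_i)$ separately, with no reason for that common membership to place it inside $\delta(J)$.
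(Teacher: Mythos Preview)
Your proposal is correct and follows essentially the same route as the paper: both arguments hinge on the identity $\delta\bigl(\bigcap_{i=1}^n I_i\bigr)=\bigcap_{i=1}^n \delta(I_i)$ and then apply the $\delta$-$N$-hyperideal condition of each $I_i$ to the common product $g(x_1^n)$. The paper phrases the final step contrapositively (assuming $g(x_1^{j-1},1_H,x_{j+1}^n)\notin\delta(I)$ to force $x_j\in\sqrt{0}^{(m,n)}$), whereas you argue directly, and you also make explicit the properness of $J$ and the inductive extension of intersection preservation that the paper leaves implicit; these are cosmetic differences, not a different strategy.
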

\begin{proof}
Put $I =\bigcap_{i=1}^n I_i$. Suppose that $g(x_1^n) \in I$ for $x_1^n \in H$ with $g(x_1^{i-1},1_H,x_{i+1}^n) \notin \delta (I)$. Since the hyperideal expansion $\delta$ of $H$ is intersection preserving, then there exists $1 \leq t \leq n$ with $g(x_1^{i-1},1_H,x_{i+1}^n) \notin \delta (I_t)$. As $I_t$ ia an $n$-ary $\delta$-$N$-hyperideal of $H$, we have $x_i \in \sqrt{0}^{(m,n)}$. Therefore $I =\bigcap_{i=1}^n I_i$ is an $n$-ary $\delta$-$N$-hyperideal of $H$.
\end{proof}
\begin{theorem}\label{26}
Suppose that $I_1,I_2$ and $I_3$ are  proper hyperideals of a commutative Krasner $(m,n)$-hyperring $H$ with $I_1 \subseteq I_2 \subseteq I_3$ and $\delta(I_1)=\delta(I_3)$. If $I_3$ is an $n$-ary $\delta$-$N$-hyperideal of $H$, then $I_2$ is an $n$-ary $\delta$-$N$-hyperideal of $H$.
\end{theorem}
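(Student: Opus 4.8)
The plan is to verify the defining condition for $I_2$ directly, using $I_3$ as a ``donor'' of the $\delta$-$N$-property and the monotonicity of the expansion $\delta$ to transport the conclusion back down to $I_2$. First I would observe a preliminary fact that makes the whole argument transparent: since $I_1 \subseteq I_2 \subseteq I_3$, monotonicity of $\delta$ gives $\delta(I_1) \subseteq \delta(I_2) \subseteq \delta(I_3)$, and because the hypothesis asserts $\delta(I_1) = \delta(I_3)$, the chain collapses to $\delta(I_1) = \delta(I_2) = \delta(I_3)$. In particular $\delta(I_3) \subseteq \delta(I_2)$, which is the inclusion I will need at the last step.

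Next I would start with an arbitrary witness of the hypothesis in the definition: suppose $g(x_1^n) \in I_2$ for some $x_1^n \in H$ with $x_i \notin \sqrt{0}^{(m,n)}$ for some $1 \leq i \leq n$. Since $I_2 \subseteq I_3$, this immediately yields $g(x_1^n) \in I_3$. Because $I_3$ is assumed to be an $n$-ary $\delta$-$N$-hyperideal of $H$ and $x_i \notin \sqrt{0}^{(m,n)}$, the definition applied to $I_3$ produces $g(x_1^{i-1},1_H,x_{i+1}^n) \in \delta(I_3)$.

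Finally I would close the loop using the preliminary fact: from $\delta(I_3) = \delta(I_2)$ (equivalently $\delta(I_3) \subseteq \delta(I_2)$) it follows that $g(x_1^{i-1},1_H,x_{i+1}^n) \in \delta(I_2)$. Since $I_2$ is proper by hypothesis, this shows $I_2$ satisfies the definition and is therefore an $n$-ary $\delta$-$N$-hyperideal of $H$.

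There is no genuine obstacle in this argument; the only thing that has to be handled with care is the bookkeeping of which expansion the conclusion lands in, and this is resolved entirely by the squeezing observation $\delta(I_1)=\delta(I_2)=\delta(I_3)$ forced by monotonicity together with the equality $\delta(I_1)=\delta(I_3)$. The remaining steps are a one-line inclusion chase and a single invocation of the definition of $\delta$-$N$-hyperideal for $I_3$.
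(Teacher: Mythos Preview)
Your proof is correct and follows essentially the same approach as the paper: both use $I_2\subseteq I_3$ to apply the $\delta$-$N$-property of $I_3$, obtaining $g(x_1^{i-1},1_H,x_{i+1}^n)\in\delta(I_3)$, and then pass to $\delta(I_2)$ via the equality $\delta(I_1)=\delta(I_3)$ together with monotonicity of $\delta$. Your presentation front-loads the squeezing observation $\delta(I_1)=\delta(I_2)=\delta(I_3)$, whereas the paper routes through $\delta(I_3)=\delta(I_1)\subseteq\delta(I_2)$ step by step, but the underlying argument is identical.
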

\begin{proof}
Let $g(x_1^n) \in I_2$ for some $x_1^n \in H$ such that $x_i \notin \sqrt{0}^{(m,n)}$. Since $I_3$ is an $n$-ary $\delta$-$N$-hyperideal of $H$ and $I_2 \subseteq I_3$, then we conclude that  $g(x_1^{i-1},1_H,x_{i+1}^n) \in \delta(I_3)$. Then we get $g(x_1^{i-1},1_H,x_{i+1}^n) \in \delta(I_1)$. Therefore $g(x_1^{i-1},1_H,x_{i+1}^n) \in \delta(I_2)$, as $I_1 \subseteq I_2$. Thus $I_2$ is an $n$-ary $\delta$-$N$-hyperideal of $H$.
\end{proof} 
\begin{theorem} \label{27} 
Suppose that  $I$ is a proper hyperideal of a commutative Krasner $(m,n)$-hyperring $H$ such that $\delta(I)$ is an $n$-ary $N$-hyperideal of $H$. Then $I$ is an $n$-ary $\delta$-$N$-hyperideal of $H$.
\end{theorem}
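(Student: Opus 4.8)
The plan is to unwind the two definitions and observe that the containment $I \subseteq \delta(I)$, which is built into the very notion of a hyperideal expansion, does essentially all of the work. First I would take arbitrary $x_1^n \in H$ satisfying $g(x_1^n) \in I$ with $x_i \notin \sqrt{0}^{(m,n)}$ for some $1 \leq i \leq n$; this is exactly the hypothesis appearing in the definition of an $n$-ary $\delta$-$N$-hyperideal, so the goal is to deduce $g(x_1^{i-1},1_H,x_{i+1}^n) \in \delta(I)$.

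Next, since $\delta$ is a hyperideal expansion of $H$, we have $I \subseteq \delta(I)$, and hence $g(x_1^n) \in \delta(I)$. Now I would invoke the standing assumption that $\delta(I)$ is an $n$-ary $N$-hyperideal of $H$. Applying the defining property of an $N$-hyperideal to the element $g(x_1^n) \in \delta(I)$, together with the fact that $x_i \notin \sqrt{0}^{(m,n)}$, yields $g(x_1^{i-1},1_H,x_{i+1}^n) \in \delta(I)$, which is precisely the required conclusion.

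Since $I$ is proper by hypothesis, this shows that $I$ satisfies the definition of an $n$-ary $\delta$-$N$-hyperideal, completing the argument. I do not expect any genuine obstacle here: the entire statement reduces to the monotone containment $I \subseteq \delta(I)$ combined with the $N$-hyperideal property of $\delta(I)$. In particular, no further properties of $\delta$ beyond its being an expansion, and no structural facts about $H$ beyond those already encoded in the definitions, are needed, so the proof is a short two-line verification.
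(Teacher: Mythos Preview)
Your proposal is correct and follows exactly the same approach as the paper: use $I \subseteq \delta(I)$ to push $g(x_1^n)$ into $\delta(I)$, then apply the $N$-hyperideal property of $\delta(I)$ to conclude $g(x_1^{i-1},1_H,x_{i+1}^n) \in \delta(I)$.
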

\begin{proof}
Let $\delta(I)$ be an $n$-ary $N$-hyperideal of $H$. Let $g(x_1^n) \in I$ for $x_1^n \in H$ with $x_i \notin\sqrt{0}^{(m,n)}$ for some $1 \leq i \leq n$. Since $I \subseteq \delta(I)$ and $\delta(I)$ is an $n$-ary $N$-hyperideal of $H$, then $g(x_1^{i-1},1_H,x_{i+1}^n) \in \delta(I)$. It follows that $I$ is an $n$-ary $\delta$-$N$-hyperideal of $H$.
\end{proof}
The inverse of the previous theorem  is true if $\delta=\delta_1$. See the next Theorem.
\begin{theorem}\label{28}
If $I$ is an $n$-ary $\delta_1$-$N$-hyperideal of a commutative Krasner $(m,n)$-hyperring $H$, then $\delta_1(I)$ is an $n$-ary $N$-hyperideal of $H$.
\end{theorem}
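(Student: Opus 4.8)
The plan is to show directly that $\delta_1(I) = \sqrt{I}^{(m,n)}$ satisfies the defining condition of an $n$-ary $N$-hyperideal, the essential tool being the radical characterization recalled in Section~2 (Theorem 4.23 in \cite{sorc1}): an element $a$ lies in $\sqrt{I}^{(m,n)}$ precisely when some power of it lands in $I$. So I would start by taking $g(x_1^n) \in \sqrt{I}^{(m,n)}$ for some $x_1^n \in H$ with $x_i \notin \sqrt{0}^{(m,n)}$ for some $1 \leq i \leq n$, and aim to prove $g(x_1^{i-1},1_H,x_{i+1}^n) \in \sqrt{I}^{(m,n)}$.

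First I would pass to a power that lands in $I$. Setting $w := g(x_1^n) \in \sqrt{I}^{(m,n)}$, the radical characterization gives $t \in \mathbb{N}$ with $g(w^{(t)},1_H^{(n-t)}) \in I$ when $t \leq n$, or $g_{(l)}(w^{(t)}) \in I$ when $t = l(n-1)+1 > n$. Using the $n$-ary associativity and commutativity axioms (2) and (5), this power of $w = g(x_1^n)$ rewrites as $g$ applied to the corresponding $t$-th powers of the entries $x_1,\ldots,x_n$; writing $\tilde{x}_j$ for the $t$-th power of $x_j$ (again interpreting "$t$-th power" via the two cases $t \leq n$ and $t > n$), we thus obtain an element of $I$ of the form $g(\tilde{x}_1,\ldots,\tilde{x}_n)$.

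Next I would verify the two ingredients needed to apply the hypothesis and then close the argument. Since $\sqrt{0}^{(m,n)}$ is closed under taking roots---if a power of $x_i$ lay in $\sqrt{0}^{(m,n)}$, then $x_i$ itself would, contradicting $x_i \notin \sqrt{0}^{(m,n)}$---we have $\tilde{x}_i \notin \sqrt{0}^{(m,n)}$. Applying the assumption that $I$ is an $n$-ary $\delta_1$-$N$-hyperideal to $g(\tilde{x}_1,\ldots,\tilde{x}_n) \in I$ yields $g(\tilde{x}_1,\ldots,\tilde{x}_{i-1},1_H,\tilde{x}_{i+1},\ldots,\tilde{x}_n) \in \delta_1(I) = \sqrt{I}^{(m,n)}$. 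Because $1_H$ is its own $t$-th power, this last element is exactly the $t$-th power of $g(x_1^{i-1},1_H,x_{i+1}^n)$. Finally, since $\sqrt{I}^{(m,n)}$ is an intersection of $n$-ary prime hyperideals and each prime $P$ absorbs roots (a power of $a$ in $P$ forces $a \in P$), the element $g(x_1^{i-1},1_H,x_{i+1}^n)$, a power of which lies in $\sqrt{I}^{(m,n)}$, itself lies in $\sqrt{I}^{(m,n)}$, as required.

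The routine part is the two-case bookkeeping for powers ($t \leq n$ versus $t = l(n-1)+1$). The main obstacle is making the power manipulation precise: one must check that the single power $g(w^{(t)},1_H^{(n-t)})$ (or $g_{(l)}(w^{(t)})$) of $w = g(x_1^n)$ genuinely distributes as $g$ applied to the individual powers of the $x_j$, and that inserting $1_H$ in the $i$-th slot commutes with passing to the $t$-th power. Both follow from $n$-ary associativity and commutativity together with $1_H$ being a scalar identity, but they merit a careful statement rather than a bare assertion.
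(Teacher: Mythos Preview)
Your proposal is correct and follows essentially the same approach as the paper: pass to a $t$-th power to land in $I$, use that $x_i^{t}\notin\sqrt{0}^{(m,n)}$ to apply the $\delta_1$-$N$-hyperideal condition, and then invoke radicalness of $\delta_1(I)=\sqrt{I}^{(m,n)}$ to extract the root. The only cosmetic difference is in the factorization: the paper writes $g(g(x_1^n)^{(t)},1_H^{(n-t)})$ directly as $g\bigl(g(x_i^{(t)},1_H^{(n-t)}),\,g(x_1^{i-1},1_H,x_{i+1}^n)^{(t)},\,1_H^{(n-t-1)}\bigr)$, so that after applying the $\delta_1$-$N$ property one immediately sees the $t$-th power of $g(x_1^{i-1},1_H,x_{i+1}^n)$, whereas you first split into the $n$ entrywise powers $\tilde{x}_j$ and then re-identify the result as that same $t$-th power.
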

\begin{proof}
Let $g(x_1^n)\in \delta_1(I)$ for $x_1^n \in H$ such that $x_i \notin\sqrt{0}^{(m,n)}$. It implies that there exists $t \in \mathbb{N}$ such that if $t \leq n$, then $g(g(x_1^n)^{(t)},1_H^{(n-t)}) \in I$. Therefore we get
\[g(x_i^{(t)},g(x_i^{i-1},1_H,x_{i+1}^n)^{(t)},1_H^{(n-2t)})\]

$\hspace{3.4cm}=g(x_i^{(t)},g(x_i^{i-1},1_H,x_{i+1}^n)^{(t)},g(1_R^{(n)}),1_H^{(n-2t-1)})$

$\hspace{3.4cm}=g(g(x_i^{(t)},1_H^{(n-t)}),g(x_i^{i-1},1_H,x_{i+1}^n)^{(t)},1_H^{(n-t-1)})$

$\hspace{3.4cm}\subseteq I$.\\
Since $I$ is an $n$-ary $\delta_1$-$N$-hyperideal of $H$ and 
$g(x_i^{(t)},1^{(n-t)}) \notin \sqrt{0}^{(m,n)}$, we get the result that
$g(g(x_i^{i-1},1_H,x_{i+1}^n)^{(t)},1_H^{(n-t)}) \in \delta_1(I)$
and so $g(x_i^{i-1},1_H,x_{i+1}^n) \in \delta_1(I)$. It follows that $\delta_1(I)$ is an $n$-ary $N$-hyperideal of $H$.
 By using a similar argument, one can easily complete the proof where $t=l(n-1)+1$.
\end{proof}
\begin{theorem}
Suppose that $\sqrt{0}^{(m,n)}$ is  the only maximal hyperideal of a commutative Krasner $(m,n)$-hyperring $H$. Then for all $a \in H$, $\langle a \rangle$ is an $n$-ary hyperideal of $H$.
\end{theorem}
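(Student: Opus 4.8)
The plan is to use the hypothesis to make $H$ a local hyperring in which the non-units are exactly the elements of $\sqrt{0}^{(m,n)}$; the $n$-ary $N$-hyperideal condition for a principal hyperideal then collapses to a single cancellation by an inverse. Since this whole section concerns $N$-hyperideals and a principal set $\langle a\rangle=g(H,a,1_H^{(n-2)})$ is a hyperideal already by its definition, I read the assertion as ``$\langle a\rangle$ is an $n$-ary $N$-hyperideal'', observing right away that for invertible $a$ one has $1_H\in\langle a\rangle$, hence $\langle a\rangle=H$, so the genuine content sits in the non-invertible case.

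First I would verify that each $a\notin\sqrt{0}^{(m,n)}$ is invertible. Since $a\in\langle a\rangle$, the assumption $a\notin\sqrt{0}^{(m,n)}$ forces $\langle a\rangle\nsubseteq\sqrt{0}^{(m,n)}$; and because by Zorn's lemma every proper hyperideal lies in a maximal one, while $\sqrt{0}^{(m,n)}$ is the only maximal hyperideal, $\langle a\rangle$ cannot be proper, so $1_H\in\langle a\rangle$ and $a$ is invertible. In particular $\langle a\rangle$ is proper precisely when $a\in\sqrt{0}^{(m,n)}$. The same observation shows $\sqrt{0}^{(m,n)}$ is $n$-ary prime: if $g(y_1^n)\in\sqrt{0}^{(m,n)}$ with every $y_j\notin\sqrt{0}^{(m,n)}$, then each $y_j$ is a unit, so $g(y_1^n)$ is a unit and cannot lie in the proper hyperideal $\sqrt{0}^{(m,n)}$, a contradiction; hence some $y_j\in\sqrt{0}^{(m,n)}$, which by Lemma 4.5 in \cite{sorc1} yields primeness and, through Theorem \ref{17}, guarantees the statement is not vacuous.

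For the core step I would take a proper $\langle a\rangle$ and suppose $g(x_1^n)\in\langle a\rangle$ with $x_i\notin\sqrt{0}^{(m,n)}$ for some $i$. By the previous paragraph $x_i$ is invertible, so $g(x_i,x_i^{-1},1_H^{(n-2)})=1_H$, and I would rewrite
\[ g(x_1^{i-1},1_H,x_{i+1}^n)=g(x_1^{i-1},g(x_i,x_i^{-1},1_H^{(n-2)}),x_{i+1}^n)=g(g(x_1^n),x_i^{-1},1_H^{(n-2)}). \]
Since $g(x_1^n)\in\langle a\rangle$ and $\langle a\rangle$ is a hyperideal, the right-hand side lies in $\langle a\rangle$; hence $g(x_1^{i-1},1_H,x_{i+1}^n)\in\langle a\rangle$, which is exactly the $n$-ary $N$-hyperideal condition.

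The one genuinely technical point, and the main obstacle I anticipate, is the second equality in the display: inserting $g(x_i,x_i^{-1},1_H^{(n-2)})$ into the $i$-th slot and then peeling off $x_i^{-1}$. I would justify it by flattening both sides to the $(2n-1)$-ary operation $g_{(2)}$, permuting its $2n-1$ arguments by the commutativity axiom (5) so that $x_1,\dots,x_n$ are grouped together, and regrouping via the $n$-ary associativity axiom (2). No new idea is needed beyond this bookkeeping, so I expect the obstacle to be routine rather than conceptual.
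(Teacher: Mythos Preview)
Your reading of the statement as ``$n$-ary $N$-hyperideal'' (or, since we are in Section~4, $n$-ary $\delta$-$N$-hyperideal) is correct, and your argument is essentially the paper's own: from $x_i\notin\sqrt{0}^{(m,n)}$ deduce that $x_i$ is invertible (because the unique maximal hyperideal is $\sqrt{0}^{(m,n)}$), then cancel $x_i$ to get $g(x_1^{i-1},1_H,x_{i+1}^n)\in\langle a\rangle\subseteq\delta(\langle a\rangle)$. Your write-up is in fact more careful than the paper's, which writes ``$a_i\notin\langle a\rangle$'' where ``$a_i\notin\sqrt{0}^{(m,n)}$'' is meant, and does not spell out the cancellation identity or the caveat that $\langle a\rangle$ is proper only for non-invertible $a$.
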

\begin{proof}
Assume that $\sqrt{0}^{(m,n)}$ is the only maximal hyperideal of a commutative Krasner $(m,n)$-hyperring $H$. Let $a \in H$ and $g(a_1^n) \in \langle a \rangle$ for $a_1^n \in H$ such that $a_i \notin \langle a \rangle$ for some $1 \leq i \leq n$. From $a_i \notin \langle a \rangle$, it follows that $a_i$ is an invertible element. Then we have $g(a_1^{i-1},1_H,a_{i+1}^n) \in \langle a \rangle\subseteq \delta(\langle a \rangle)$ which means that $\langle a \rangle$ is an $n$-ary hyperideal of $H$.
\end{proof}
\begin{theorem}
Let $I$ be a hyperideal of a commutative Krasner $(m,n)$-hyperring $H$ and $\delta$ be a a hyperideal expansion of $H$ such that $\delta( \delta(I))=\delta(I)$. If $I$ is an $n$-ary $\delta$-$N$-hyperideal of $H$ and $x \notin \sqrt{0}^{(m,n)}$, then $\delta
(E_x)=\delta(I)$ where $E_x=\{y \in H \ \vert \ g(y,x,1_H^{(n-2)}) \in I\}$.
\end{theorem}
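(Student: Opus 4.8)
The plan is to prove the equality $\delta(E_x) = \delta(I)$ by establishing the two inclusions separately, exploiting the monotonicity built into the definition of a hyperideal expansion together with the idempotency hypothesis $\delta(\delta(I)) = \delta(I)$.

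First I would verify the containment $\delta(I) \subseteq \delta(E_x)$. This reduces to showing $I \subseteq E_x$ and then applying monotonicity of $\delta$. The inclusion $I \subseteq E_x$ is immediate: for any $y \in I$, the hyperideal property of $I$ forces $g(y, x, 1_H^{(n-2)}) \in I$, so $y \in E_x$ by the very definition of $E_x$.

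The substantive half is the reverse inclusion $\delta(E_x) \subseteq \delta(I)$. Here I would prove the sharper containment $E_x \subseteq \delta(I)$ directly. Take $y \in E_x$, so that $g(y, x, 1_H^{(n-2)}) \in I$. Reading this as $g$ applied to the $n$-tuple $(y, x, 1_H, \ldots, 1_H)$ whose second coordinate is $x$, and using the hypothesis $x \notin \sqrt{0}^{(m,n)}$, the defining property of an $n$-ary $\delta$-$N$-hyperideal applied at position $2$ yields $g(y, 1_H, 1_H^{(n-2)}) \in \delta(I)$. Since $g(y, 1_H^{(n-1)}) = y$ by the scalar identity, this is exactly $y \in \delta(I)$, giving $E_x \subseteq \delta(I)$. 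Applying $\delta$ and invoking monotonicity together with $\delta(\delta(I)) = \delta(I)$ then produces $\delta(E_x) \subseteq \delta(\delta(I)) = \delta(I)$.

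Combining the two inclusions yields $\delta(E_x) = \delta(I)$. I expect no genuine obstacle; the only point requiring care is recognizing that the $\delta$-$N$-hyperideal condition should be used to extract the strong inclusion $E_x \subseteq \delta(I)$ rather than merely comparing expansions, after which the idempotency hypothesis $\delta(\delta(I)) = \delta(I)$ closes the argument. I note that the hypothesis $x \notin \sqrt{0}^{(m,n)}$ (rather than $x \notin \delta(I)$) is exactly what is needed to invoke the definition in this form.
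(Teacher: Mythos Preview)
Your proof is correct and follows essentially the same route as the paper: both arguments establish $I \subseteq E_x$ from the hyperideal property to get $\delta(I) \subseteq \delta(E_x)$, then use the $\delta$-$N$-hyperideal condition with $x \notin \sqrt{0}^{(m,n)}$ to obtain $E_x \subseteq \delta(I)$, and close with monotonicity plus the idempotency hypothesis $\delta(\delta(I)) = \delta(I)$. Your write-up is in fact cleaner than the paper's, which contains a couple of typos in this proof.
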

\begin{proof}
Assume that $I$ is an $n$-ary $\delta$-$N$-hyperideal of $H$ and $x \notin \sqrt{0}^{(m,n)}$. Let $y \in E_x$. This means $g(y,x,1_H^{(n-2)}) \in I$. Since  $I$ is an $n$-ary hyperideal of $H$ and $x \notin \sqrt{0}^{(m,n)}$, we get $y=(y,1^{(n-1)}) \in \delta(I)$ which implies $E_x \subseteq \delta(I)$ and so $\delta(E_x) \subseteq \delta(\delta(I))$. By the assumpption we have $\delta(E_x) \subseteq \delta(I)$. On the other hand, we have $I \subseteq E_x$ and so $\delta(I) \subseteq \delta(E_x)$. Consequently, $\delta(E_x) \subseteq \delta(I)$.
\end{proof}
\begin{theorem}
Suppose that $I$ is a maximal $n$-ary  $\delta$-$N$-hyperideal of a commutative Krasner $(m,n)$-hyperring $H$ and  $x \notin \delta(I).$ If $F_x \subseteq \delta(E_x)$ where $F_x=\{y \in H \ \vert \ g(y,x,1_H^{(n-2)}) \in \delta(I)\}$ and $E_x=\{y \in H \ \vert \ g(y,x,1_H^{(n-2)}) \in I\}$, then $I=\sqrt{0}^{(m,n)}$ is an $n$-ary prime hyperideal of $H$.
\end{theorem}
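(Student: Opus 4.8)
The statement is the $\delta$-analogue of Theorem \ref{16}, with Theorem \ref{24} playing the role that Theorem \ref{15} played there; so the plan is to reproduce the maximal-hyperideal argument in the $\delta$-setting and then transfer the conclusion back to the genuine $N$-hyperideal machinery. First I would feed the hypotheses into Theorem \ref{24}: since $x \notin \delta(I)$ and $F_x \subseteq \delta(E_x)$, that theorem gives that $E_x$ is an $n$-ary $\delta$-$N$-hyperideal of $H$. Because $I$ is a hyperideal we always have $I \subseteq E_x$, and $E_x \neq H$, for otherwise $1_H \in E_x$ would force $x = g(1_H,x,1_H^{(n-2)}) \in I \subseteq \delta(I)$, contradicting $x \notin \delta(I)$. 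Thus $E_x$ is a proper $\delta$-$N$-hyperideal containing $I$, and maximality of $I$ forces $E_x = I$.

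The decisive step is then to upgrade $I$ from a $\delta$-$N$-hyperideal to an honest $n$-ary $N$-hyperideal, i.e. to prove $\delta(I) = I$. Here I would exploit the identity $F_x = \delta(I)$: the inclusion $F_x \subseteq \delta(E_x) = \delta(I)$ is the hypothesis (using $E_x = I$), while $\delta(I) \subseteq F_x$ holds because $\delta(I)$ is a hyperideal, so $w \in \delta(I)$ gives $g(w,x,1_H^{(n-2)}) \in \delta(I)$ and hence $w \in F_x$. Since $x \notin \delta(I)$, the hyperideal $\delta(I) = F_x$ is proper and contains $I$; so once one checks that $F_x$ is again a $\delta$-$N$-hyperideal, maximality of $I$ yields $\delta(I) = F_x = I$. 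With $\delta(I) = I$ the defining implication of a $\delta$-$N$-hyperideal lands inside $I$, so $I$ is an $n$-ary $N$-hyperideal.

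From there the conclusion follows from the results already in hand. Every $N$-hyperideal is a $\delta$-$N$-hyperideal, so a maximal $\delta$-$N$-hyperideal that happens to be an $N$-hyperideal is in fact a maximal $N$-hyperideal; Theorem \ref{16} then gives $I = \sqrt{0}^{(m,n)}$. Moreover, since $H$ now admits an $n$-ary $N$-hyperideal, Theorem \ref{17} shows $\sqrt{0}^{(m,n)}$ is $n$-ary prime (equivalently, one applies Theorem \ref{12} to the prime $N$-hyperideal $I$). This delivers exactly the desired statement that $I = \sqrt{0}^{(m,n)}$ is an $n$-ary prime hyperideal of $H$.

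I expect the genuine obstacle to be the middle step, namely verifying that $F_x$ (equivalently $\delta(I)$) is a $\delta$-$N$-hyperideal, so that maximality can collapse $\delta(I)$ onto $I$; this is precisely the point where the hypothesis $F_x \subseteq \delta(E_x)$ and the maximality assumption must cooperate. A safer route that parallels Theorem \ref{16} verbatim is to argue by descent on a product $g(x_1^n) \in I$, peeling off one factor at a time through the sets $E_{x_j}$ via the identity $g(x_1^n) = g(g(x_1^{j-1},1_H,x_{j+1}^n),x_j,1_H^{(n-2)})$, to obtain $g(x_1^{j-1},1_H,x_{j+1}^n) \in E_{x_j}$, reaching some $x_k \in I$ and invoking Lemma~4.5 of \cite{sorc1} for primality. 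The catch is that each $E_{x_j}$ must itself be a $\delta$-$N$-hyperideal, which re-requires $F_{x_j} \subseteq \delta(E_{x_j})$ and $x_j \notin \delta(I)$ at every stage; so the cleanest write-up is the one above, in which the single distinguished $x$ is used to force $\delta(I) = I$ and everything is then reduced to the $N$-hyperideal case.
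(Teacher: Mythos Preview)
Your main line of attack is \emph{not} what the paper does. The paper follows the descent route you sketch at the end: given $g(a_1^n)\in I$ with some $a_i\notin I$, it invokes Theorem~\ref{24} to declare $E_{a_i}$ a $\delta$-$N$-hyperideal, uses maximality to force $E_{a_i}=I$, hence $g(a_1^{i-1},1_H,a_{i+1}^n)\in I$, and iterates until some $a_j\in I$, establishing primality; it then argues $I\subseteq\sqrt{0}^{(m,n)}$ directly by taking $x\in I\setminus\sqrt{0}^{(m,n)}$ and applying the $\delta$-$N$ condition to $g(x,1_H^{(n-1)})\in I$.

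Your preferred route (collapse $\delta(I)$ down to $I$, then quote Theorems~\ref{16} and~\ref{17}) is conceptually cleaner, but the step you yourself flagged is a genuine gap, not a technicality. To invoke maximality and conclude $\delta(I)=F_x=I$, you need $F_x$ to be a $\delta$-$N$-hyperideal: if $g(x_1^n)\in F_x=\delta(I)$ with $x_i\notin\sqrt{0}^{(m,n)}$, you must land $g(x_1^{i-1},1_H,x_{i+1}^n)$ in $\delta(\delta(I))$. Nothing in the hypotheses (no idempotence of $\delta$, no assumption that $\delta(I)$ is a $\delta$-$N$-hyperideal) delivers this, and the inclusion $F_x\subseteq\delta(E_x)$ only helps with products that already lie in $I$, not merely in $\delta(I)$. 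So as written, your argument does not close.

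Your diagnosis of the descent route is also accurate: to apply Theorem~\ref{24} at each stage one needs $a_j\notin\delta(I)$ and $F_{a_j}\subseteq\delta(E_{a_j})$ for the \emph{varying} $a_j$, whereas the stated hypothesis concerns only a single fixed $x$; the paper's write-up glosses over this, and its final contradiction (``$g(1_H^{(n)})\in I$'') should, from the $\delta$-$N$ definition alone, only yield $1_H\in\delta(I)$. In short, the paper's intended argument is the descent one, and the hypothesis is presumably meant to hold for every $x\notin\delta(I)$; under that reading the descent proof goes through, while your $\delta(I)=I$ strategy still lacks the missing verification.
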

\begin{proof}
Let $I$ be a maximal $n$-ary  $\delta$-$N$-hyperideal of $H$. Assume that $g(a_1^n) \in I$ for $a_1^n \in H$  such that $a_i \notin I$ for some $1 \leq i \leq n$. Then, by Theorem \ref{24}, $E_x$ is an $n$-ary $\delta$-$N$-hyperideal of $H$. Therefore $E_x=I$ by the maximality of $I$. Thus we have $g(a_1^{i-1}1_H,a_{i+1}^n) \in I$. Since $I$ is a maximal $n$-ary  $\delta$-$N$-hyperideal of $H$, we can
continue the process and so get $a_j \in I$ for some $1 \leq j \leq n$ which implies $I$ is an $n$-ary prime hyperideal of $H$. This means that $\sqrt{0}^{(m,n)} \subseteq I$. Now, suppose that $I \nsubseteq \sqrt{0}^{(m,n)}$. Then there exist some $x \in I$ such that $x \notin \sqrt{0}^{(m,n)}$. Since $g(x,1_H^{(n-1)}) \in I$ and $I$ is an  $n$-ary  $\delta$-$N$-hyperideal of $H$, then we get the result that $g(1_H^{(n)}) \in I$ which is a contradiction. Then we conclude that $I = \sqrt{0}^{(m,n)}$.
\end{proof}
\begin{theorem}\label{29}
Let $\gamma$ and $\delta$ be two  hyperideal expansions of a commutative Krasner $(m,n)$-hyperring $H$.   If $I$ is an $n$-ary $\gamma$-$N$-hyperideal of $H$ and $\gamma(J) \subseteq \delta(J)$ for all hyperideals $J$ of $H$, then $I$ is an $n$-ary $\delta$-$N$-hyperideal of $H$. Moreover, if $\delta(I)$  is an $n$-ary $\gamma$-$N$-hyperideal of $H$, then $I$ is an $n$-ary $\gamma \circ \delta$-$N$-hyperideal of $H$. 
\end{theorem}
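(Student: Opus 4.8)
The plan is to treat the two assertions separately, in each case simply unwinding the definition of an $n$-ary $\delta$-$N$-hyperideal and then pushing the resulting membership along an inclusion between expansions.

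For the first statement, I would start from the defining condition. Suppose $g(x_1^n) \in I$ for some $x_1^n \in H$ with $x_i \notin \sqrt{0}^{(m,n)}$ for some $1 \leq i \leq n$. Since $I$ is an $n$-ary $\gamma$-$N$-hyperideal, this immediately yields $g(x_1^{i-1},1_H,x_{i+1}^n) \in \gamma(I)$. The only real step is then a single inclusion: applying the hypothesis $\gamma(J) \subseteq \delta(J)$ with $J=I$ gives $\gamma(I) \subseteq \delta(I)$, whence $g(x_1^{i-1},1_H,x_{i+1}^n) \in \delta(I)$. As $I$ is proper (being a $\gamma$-$N$-hyperideal), this is precisely the statement that $I$ is an $n$-ary $\delta$-$N$-hyperideal of $H$.

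For the second statement, I would first observe that $\gamma \circ \delta$ is itself a hyperideal expansion: the chain $I \subseteq \delta(I) \subseteq \gamma(\delta(I))$ gives the containment property $I \subseteq (\gamma \circ \delta)(I)$, and monotonicity of $\delta$ followed by monotonicity of $\gamma$ gives monotonicity of the composite. Then, assuming $\delta(I)$ is an $n$-ary $\gamma$-$N$-hyperideal, I would again take $g(x_1^n) \in I$ with $x_i \notin \sqrt{0}^{(m,n)}$ for some $1 \leq i \leq n$. Since $I \subseteq \delta(I)$, we have $g(x_1^n) \in \delta(I)$, so the $\gamma$-$N$-hyperideal property of $\delta(I)$ produces $g(x_1^{i-1},1_H,x_{i+1}^n) \in \gamma(\delta(I)) = (\gamma \circ \delta)(I)$, which is exactly what is required.

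Neither part poses a serious obstacle; both reduce to chasing one element through the defining implication and then applying a containment of expansions. The only points deserving a little care are verifying that $\gamma \circ \delta$ genuinely qualifies as a hyperideal expansion in the second part, and tracking properness of $I$ (which follows since $I$ is a $\gamma$-$N$-hyperideal, or alternatively since $I \subseteq \delta(I) \neq H$) so that the conclusion asserts a nonvacuous $\delta$-$N$- respectively $\gamma \circ \delta$-$N$-hyperideal property.
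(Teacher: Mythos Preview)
Your proposal is correct and follows essentially the same approach as the paper: the paper dismisses the first assertion as ``straightforward'' and for the second assertion argues exactly as you do, passing from $g(x_1^n)\in I$ to $g(x_1^n)\in\delta(I)$ via $I\subseteq\delta(I)$ and then invoking the $\gamma$-$N$-hyperideal property of $\delta(I)$. Your additional remarks on verifying that $\gamma\circ\delta$ is a hyperideal expansion and on properness of $I$ are details the paper leaves implicit.
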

\begin{proof}
The proof of the first assertion is straightforward. For the second assertion, suppose that $g(x_1^n) \in I$ for $x_1^n \in H$ such that $x_i \notin\sqrt{0}^{(m,n)}$. Therefore we get  $g(x_1^n) \in \delta(I)$, as $I \subseteq \delta(I)$. By the assumption, we conclude that $g(x_1^{i-1},1_H,x_{i+1}^n) \in \gamma(\delta(I))=\gamma \circ \delta(I)$ which means $I$ is an $n$-ary $\gamma \circ \delta$-$N$-hyperideal of $H$.
\end{proof}
Assume that $(H_1,f_1,g_1)$ and $(H_2,f_2,g_2)$ are two commutative Krasner $(m,n)$-hyperrings and $h: H_1 \longrightarrow H_2$ a hyperring homomorphism. Let $\delta$ and $\gamma$ be two hyperideal expansions of $H_1$ and $H_2$, respectively. Recall from \cite{mah3} that $h$ is  a $\delta \gamma$-homomorphism if $\delta(h^{-1}(I_2)) = h^{-1}(\gamma(I_2))$ for the hyperideal $I_2$ of
$H_2$. Note that $\gamma(h(I_1)=h(\delta(I_1)$ for $\delta \gamma$-epimorphism $h$ and for hyperideal $I_1$ of $H_1$ such that $ Ker (h) \subseteq I_1$. \\For instance, suppose that $(H_1,f_1,g_1)$ and $(H_2,f_2,g_2)$ are two Krasner $(m,n)$-hyperrings. If $\delta_1$ of $H_1$ and $\gamma_1$ of $H_2$ are the hyperideal expansions defined in Example 3.2 of \cite{mah3}, then all homomorphism $h:H_1 \longrightarrow H_2$ is a $\delta_1 \gamma_1$-homomorphism.
\begin{theorem}\label{210}
Let $\delta$ and $\gamma$ be two hyperideal expansions of commutative Krasner $(m,n)$-hyperrings $(H_1,f_1,g_1)$ and $(H_2,f_2,g_2)$, respectively, and let $ h:H_1 \longrightarrow H_2$ be a $\delta \gamma$-homomorphism. Then the followings hold : 
\begin{itemize} 
\item[\rm(1)]~ If $I_2$ is an $n$-ary $\gamma$-$N$-hyperideal of $H_2$ and $h$ is a monomorphism, then $h^{-1} (I_2)$ is an $n$-ary $\delta$-$N$-hyperideal of $H_1$.
\item[\rm(2)]~ If $h $ is  an epimorphism and $I_1$ is an $n$-ary $\delta$-$N$-hyperideal of $H_1$ containing $Ker(h)$, then $h(I_1)$ is an $n$-ary $\gamma$-$N$-hyperideal of $H_2$.
\end{itemize} 
\end{theorem}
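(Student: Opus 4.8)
The plan is to mimic the proof of Theorem \ref{112}, which is exactly the special case $\delta=\gamma=\delta_0$, and to insert the defining property of a $\delta\gamma$-homomorphism at the single point where the expansions enter. Both parts reduce to transporting a membership of the form $g(x_1^{i-1},1_H,x_{i+1}^n)\in\delta(\,\cdot\,)$ across $h$ by means of the identities $\delta(h^{-1}(I_2))=h^{-1}(\gamma(I_2))$ and $\gamma(h(I_1))=h(\delta(I_1))$ recalled just before the statement.

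For part (1), I would start with $g_1(x_1^n)\in h^{-1}(I_2)$ where $x_i\notin\sqrt{0_{H_1}}^{(m,n)}$ for some $i$. Applying $h$ gives $g_2(h(x_1),\dots,h(x_n))=h(g_1(x_1^n))\in I_2$. The first subsidiary fact needed is that injectivity of $h$ forces $h(x_i)\notin\sqrt{0_{H_2}}^{(m,n)}$: if a power of $h(x_i)$ vanished, then by injectivity the corresponding power of $x_i$ would vanish, contradicting $x_i\notin\sqrt{0_{H_1}}^{(m,n)}$; this is precisely the radical-preservation argument already used in Theorem \ref{112}. Since $I_2$ is an $n$-ary $\gamma$-$N$-hyperideal, I then obtain $h(g_1(x_1^{i-1},1_{H_1},x_{i+1}^n))=g_2(h(x_1),\dots,1_{H_2},\dots,h(x_n))\in\gamma(I_2)$, whence $g_1(x_1^{i-1},1_{H_1},x_{i+1}^n)\in h^{-1}(\gamma(I_2))$. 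The decisive step is then to read $h^{-1}(\gamma(I_2))=\delta(h^{-1}(I_2))$ off the $\delta\gamma$-homomorphism hypothesis, which places $g_1(x_1^{i-1},1_{H_1},x_{i+1}^n)$ in $\delta(h^{-1}(I_2))$ and finishes the verification.

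For part (2), I would take $g_2(y_1^n)\in h(I_1)$ with $y_i\notin\sqrt{0_{H_2}}^{(m,n)}$, lift the $y_j$ to preimages $x_j$ using surjectivity, and use $Ker(h)\subseteq I_1$ to pull $g_2(y_1^n)\in h(I_1)$ back to $g_1(x_1^n)\in I_1$. Here the radical fact required is the easy inclusion $h(\sqrt{0_{H_1}}^{(m,n)})\subseteq\sqrt{0_{H_2}}^{(m,n)}$, whose contrapositive yields $x_i\notin\sqrt{0_{H_1}}^{(m,n)}$. Since $I_1$ is an $n$-ary $\delta$-$N$-hyperideal, $g_1(x_1^{i-1},1_{H_1},x_{i+1}^n)\in\delta(I_1)$; applying $h$ and invoking the epimorphism identity $h(\delta(I_1))=\gamma(h(I_1))$ (valid because $Ker(h)\subseteq I_1$) gives $g_2(y_1^{i-1},1_{H_2},y_{i+1}^n)\in\gamma(h(I_1))$, as required.

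The only genuinely new ingredient beyond Theorem \ref{112} is this passage through the expansion functors, so I expect the main point to be the bookkeeping that guarantees the $\delta\gamma$-homomorphism identity (respectively its epimorphism form) is applied to the correct hyperideal and in the correct direction; the radical-preservation facts and the distribution of $g$ under $h$ are routine and already appear in the unexpanded case.
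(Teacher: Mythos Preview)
Your proposal is correct and follows essentially the same route as the paper's proof: both parts transport the product $g(x_1^{i-1},1_H,x_{i+1}^n)$ through $h$ exactly as in Theorem \ref{112}, and then invoke the $\delta\gamma$-homomorphism identities $h^{-1}(\gamma(I_2))=\delta(h^{-1}(I_2))$ and $h(\delta(I_1))=\gamma(h(I_1))$ at the final step. The only cosmetic difference is that in part (1) the paper phrases the $\gamma$-$N$-hyperideal condition as a disjunction and then eliminates the case $h(x_i)\in\sqrt{0_{H_2}}^{(m,n)}$ via $Ker(h)=\{0_{H_1}\}$, whereas you start from $x_i\notin\sqrt{0_{H_1}}^{(m,n)}$ and push this forward; the content is identical.
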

\begin{proof}
$(1)$ Let  $g_1(x_1^n) \in h^{-1} (I_2)$ for  $x_1^n \in H_1$.  It follows that $g_2(h(x_1),\cdots,h(x_n))=h(g_1(x_1^n)) \in I_2$. Since $I_2$ is an $n$-ary $\gamma$-$N$-hyperideal of $H_2$, then $h(x_i) \in \sqrt{0_{H_2}}^{(m,n)}$ for some $1 \leq i \leq n$ or 
\[g_2(h(x_1),...,h(x_{i-1}),1_{H_2},h(x_{i+1}),...,h(x_n))\]
$\hspace{3.4cm}=
h(g_1(x_1^{i-1},1_{H_1},x_{i+1}^n))$

$\hspace{3cm}\in \gamma(I_2)$.\\
In the first possibility, since $Ker(h)=\{0_{H_1}\}$, we obtain $x_i \in \sqrt{0_{H_1}}^{(m,n)}$. In the second possibility, we get the result that  $g_1(x_1^{i-1},1_{H_1},x_{i+1}^n) \in h^{-1}(\gamma(I_2))=\delta(h^{-1}(I_2)$. Consequently,  $h^{-1}(I_2)$ is a $\delta$-$N$-hyperideal of $H_1$.

$(2)$ Assume that $g_2(y_1^n) \in h(I_1)$ for $y_1^n \in H_2$ with $y_i \notin \sqrt{0_{H_2}}^{(m,n)}$ for some $1 \leq i \leq n$.  Then there exist $x_1^n \in H_1$ such that $h(x_1)=y_1,...,h(x_n)=y_n$ as $h$ is an epimorphism. Therefor
$h(g_1(x_1^n))=g_2(h(x_1),...,h(x_n))=g_2(y_1^n) \in h(I_1)$.\\
Since $ I_1$ contains $Ker(h)$, we have $g_1(x_1^n) \in I_1$. Since $y_i \notin \sqrt{0_{H_2}}^{(m,n)}$, then $x_i \notin \sqrt{0_{H_1}}^{(m,n)}$. Since $I_1$ is a $\delta$-$N$-hyperideal of $H_1$ and $x_i \notin \sqrt{0_{H_1}}^{(m,n)}$, it follows that $g_1(x_1^{i-1},1_{H_1},x_{i+1}^n) \in \delta(I_1)$ which implies 
\[h(g_1(x_1^{i-1},1_{H_1},x_{i+1}^n))
=g_2(h(x_1),...,h(x_{i-1}),1_{H_2},h(x_{i+1}),...,h(x_n))\]

$\hspace{4.1cm}=g_2(y_1^{i-1},1_{H_2},y_{i+1}^n)$

$\hspace{4.1cm}\in h(\delta(I_1)).$\\
By the assumption, we have $h(\delta(I_1))=\gamma(h(I_1))$. So $g_2(y_1^{i-1},1_{H_2},y_{i+1}^n) \in \gamma(h(I_1))$. Hence $h(I_1)$ is an $n$-ary  $\gamma$-$N$-hyperideal of $H_2$.
\end{proof}
\begin{corollary} \label{211}
Suppose that $I$ and $J$ are two hyperideals of a commutative Krasner $(m,n)$-hyperring $H$ with $J \subseteq I$. If $I$ is an $n$-ary $\delta$-$N$-hyperideal of $H$, then $I/J$ is an $n$-ary $\delta_q$-$J$-hyperideal of $H/J$. 
\end{corollary}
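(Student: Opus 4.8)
The plan is to realize $I/J$ as the image of $I$ under the canonical projection and then invoke Theorem \ref{210}(2). First I would consider the projection map $\theta: H \longrightarrow H/J$ defined by $a \longmapsto f(a,J,0_H^{(m-2)})$. This $\theta$ is an epimorphism of Krasner $(m,n)$-hyperrings with $\mathrm{Ker}(\theta)=J$, and since $J \subseteq I$ by hypothesis, $I$ contains $\mathrm{Ker}(\theta)$. The target expansion will be $\gamma=\delta_q$ on $H/J$, with $\delta$ on $H$ as the source expansion.

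The key step is to check that $\theta$ is a $\delta\delta_q$-homomorphism, that is, $\delta(\theta^{-1}(\bar K))=\theta^{-1}(\delta_q(\bar K))$ for every hyperideal $\bar K$ of $H/J$. By the correspondence of hyperideals under the projection, each hyperideal of $H/J$ has the form $\bar K=K/J$ for a unique hyperideal $K$ of $H$ with $J \subseteq K$, and $\theta^{-1}(K/J)=K$. Invoking the definition of $\delta_q$ recalled at the start of this section, namely $\delta_q(K/J)=\delta(K)/J$, and using $J \subseteq K \subseteq \delta(K)$, I would compute
\[\theta^{-1}(\delta_q(K/J))=\theta^{-1}(\delta(K)/J)=\delta(K)=\delta(\theta^{-1}(K/J)),\]
which is precisely the $\delta\delta_q$-homomorphism condition.

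With this verified, Theorem \ref{210}(2) applies directly: since $\theta$ is a $\delta\delta_q$-epimorphism and $I$ is an $n$-ary $\delta$-$N$-hyperideal of $H$ containing $\mathrm{Ker}(\theta)$, the image $\theta(I)=I/J$ is an $n$-ary $\delta_q$-$N$-hyperideal of $H/J$, as claimed. I expect the main obstacle to be the verification of the $\delta\delta_q$-homomorphism property; the facts that $\theta$ is epimorphic, that $\mathrm{Ker}(\theta)=J$, and that $\theta^{-1}(K/J)=K$ are routine consequences of the quotient construction, and once the defining identity for $\delta_q$ is matched against the homomorphism condition the conclusion follows from Theorem \ref{210}(2) with no further calculation. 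This mirrors the strategy used in Theorem \ref{113}, where the same projection together with Theorem \ref{112}(2) handled the ordinary $N$-hyperideal case.
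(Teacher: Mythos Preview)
Your proposal is correct and follows exactly the approach of the paper's own proof, which simply says ``the claim follows by using Theorem \ref{210}(2) and by a similar argument to that of \ref{113}.'' You have supplied the one detail the paper leaves implicit, namely the verification that the projection $\theta$ is a $\delta\delta_q$-homomorphism via $\theta^{-1}(\delta_q(K/J))=\delta(K)=\delta(\theta^{-1}(K/J))$, which is precisely what is needed before Theorem \ref{210}(2) can be invoked.
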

\begin{proof}
The claim follows by using Theorem \ref{210} (2) and by a similar argument to that of \ref{113}. 
\end{proof}
The next theorem shows that for an $n$-ary $\delta$-$N$-hyperideal $I$ of $H$ if $\delta (\sqrt{I}^{(m,n)})$ contains $\sqrt{\delta(I)}^{(m,n)}$, then $\sqrt{I}^{(m,n)}$ is an $n$-ary $\delta$-$N$-hyperideal of $H$. 
\begin{theorem}\label{212}
Suppose that  $I$ is an $n$-ary $\delta$-$N$-hyperideal of a commutative Krasner $(m,n)$-hyperring $H$ with $\sqrt{\delta(I)}^{(m,n)} \subseteq \delta (\sqrt{I}^{(m,n)})$. Then $\sqrt{I}^{(m,n)}$ is an $n$-ary $\delta$-$N$-hyperideal of $H$. 
\end{theorem}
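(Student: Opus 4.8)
The plan is to reduce the claim to the radical-containment hypothesis by converting each witness to the $\delta$-$N$-condition for $\sqrt{I}^{(m,n)}$ into a membership in $\sqrt{\delta(I)}^{(m,n)}$. First I would record that $\sqrt{I}^{(m,n)}$ is proper: since $I$ is a proper hyperideal, $1_H \notin I$, and as $1_H \in \sqrt{I}^{(m,n)}$ would force $g(1_H^{(n)}) = 1_H \in I$, we get $1_H \notin \sqrt{I}^{(m,n)}$, so $\sqrt{I}^{(m,n)} \neq H$.

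Now suppose $g(x_1^n) \in \sqrt{I}^{(m,n)}$ for some $x_1^n \in H$ with $x_i \notin \sqrt{0}^{(m,n)}$ for some $1 \le i \le n$; I must show $g(x_1^{i-1},1_H,x_{i+1}^n) \in \delta(\sqrt{I}^{(m,n)})$. By the description of the radical recalled from Theorem 4.23 of \cite{sorc1}, there is $t \in \mathbb{N}$ with $g((g(x_1^n))^{(t)},1_H^{(n-t)}) \in I$ in the case $t \le n$ (the case $t = l(n-1)+1$ being handled identically with $g_{(l)}$). Writing $y := g(x_1^{i-1},1_H,x_{i+1}^n)$, the central computation is to re-expand this membership, using the associativity and commutativity of $g$ together with the scalar identity $1_H$, into the form
\[
g\big(g(x_i^{(t)},1_H^{(n-t)}),\; g(y^{(t)},1_H^{(n-t)}),\; 1_H^{(n-2)}\big) \in I .
\]
Informally this is the identity ``$(x_1\cdots x_n)^t = x_i^{\,t}\cdot y^{\,t}$'': one collects the $t$ copies of each factor $x_j$ and separates the $t$ copies of $x_i$ from the remaining factors, which together constitute the $t$-th power of $y$.

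Next I would verify that $g(x_i^{(t)},1_H^{(n-t)}) \notin \sqrt{0}^{(m,n)}$: were it in $\sqrt{0}^{(m,n)}$, then some power of it would vanish, but a suitable power of $g(x_i^{(t)},1_H^{(n-t)})$ is again a power of $x_i$, which would place $x_i \in \sqrt{0}^{(m,n)}$, contrary to hypothesis. With the displayed membership in hand and $g(x_i^{(t)},1_H^{(n-t)}) \notin \sqrt{0}^{(m,n)}$, the $n$-ary $\delta$-$N$-hyperideal property of $I$ applies to the factor $g(x_i^{(t)},1_H^{(n-t)})$ and yields $g(1_H, g(y^{(t)},1_H^{(n-t)}), 1_H^{(n-2)}) = g(y^{(t)},1_H^{(n-t)}) \in \delta(I)$. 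By the radical characterization this says precisely $y \in \sqrt{\delta(I)}^{(m,n)}$.

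Finally I would invoke the hypothesis $\sqrt{\delta(I)}^{(m,n)} \subseteq \delta(\sqrt{I}^{(m,n)})$ to conclude $y = g(x_1^{i-1},1_H,x_{i+1}^n) \in \delta(\sqrt{I}^{(m,n)})$, which is exactly the $n$-ary $\delta$-$N$-hyperideal condition for $\sqrt{I}^{(m,n)}$. I expect the main obstacle to be the rearrangement step: justifying the power-multiplicativity identity rigorously inside the $(m,n)$-hyperring requires careful bookkeeping with the $n$-ary associativity conventions for $g_{(l)}$ and the repeated use of commutativity and of the scalar identity $1_H$, and one must run the two cases $t \le n$ and $t = l(n-1)+1$ in parallel. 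Everything after that is a direct application of the definitions together with the given containment.
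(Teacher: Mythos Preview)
Your proposal is correct and follows essentially the same route as the paper: pass to a $t$-th power via the radical characterization, rearrange $(g(x_1^n))^{(t)}$ as a product of $g(x_i^{(t)},1_H^{(n-t)})$ with $g(y^{(t)},1_H^{(n-t)})$, apply the $\delta$-$N$ property of $I$ using $g(x_i^{(t)},1_H^{(n-t)})\notin\sqrt{0}^{(m,n)}$, and then invoke the containment $\sqrt{\delta(I)}^{(m,n)}\subseteq\delta(\sqrt{I}^{(m,n)})$. Your extra checks (properness of $\sqrt{I}^{(m,n)}$ and the justification that $g(x_i^{(t)},1_H^{(n-t)})\notin\sqrt{0}^{(m,n)}$) are points the paper leaves implicit.
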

\begin{proof}
Assume that  $g(x_1^n)\in \sqrt{I}^{(m,n)}$ for  $x_1^n \in H$ with $x_i \notin \sqrt{0}^{(m,n)}$ for some $1 \leq i \leq n$. Then there exists $t \in \mathbb{N}$ such that if $t \leq n$ implies that  $g(g(x_1^n)^{(t)},1_H^{(n-t)}) \in I$. Then we get the result that

  $g(x_i^{(t)},g(x_1^{i-1},1_H,x_{i+1}^n)^{(t)},1_H^{(n-2t)})=
g(g(x_i^{(t)},1_H^{(n-t)}),g(x_1^{i-1},1_H,x_{i+1}^n)^{(t)},$

$\hspace{0.5cm}1_H^{(n-t-1)}) \subseteq I$.\\
Since  $g(x_i^{(t)},1^{(n-t)}) \notin \sqrt{0}^{(m,n)}$ and $I$ is an $n$-ary $\delta$-$N$-hyperideal of $H$,   then we have $g(g(x_1^{i-1},1_H,x_{i+1}^n)^{(t)},1_H^{(n-t)}) \in \delta(I)$.
 It follows that  $g(x_1^{i-1},1_R,x_{i+1}^n) \in \sqrt{\delta(I)}^{(m,n)}$. Since $\sqrt{\delta(I)}^{(m,n)} \subseteq \delta (\sqrt{I}^{(m,n)})$, then $g(x_1^{i-1},1_H,x_{i+1}^n) \in \delta(\sqrt{I}^{(m,n)})$, as needed. A similar argument will show that if  $t=l(n-1)+1$, then   $\sqrt{I}^{(m,n)}$ is an $n$-ary $\delta$-$N$-hyperideal of $H$.
\end{proof}
Recall from \cite{sorc1} that a non-empty subset $S$ of a Krasner $(m,n)$-hyperring $H$ is said to be an $n$-ary multiplicative, if $g(t_1^n) \in S$ for $t_1,...,t_n \in S$.
The notion of Krasner $(m,n)$-hyperring of fractions was introduced in \cite{mah5}. 
Assume that $\delta$ is a hyperideal expansion of a commutative Krasner $(m,n)$-hyperring $H$ and $S$ is an $n$-ary multiplicative subset of $H$ with $1 \in S$. Then $\delta_S$ is a hyperideal expansion of $S^{-1}H$ with $\delta_S(S^{-1}I)=S^{-1}(\delta(I))$.
\begin{theorem} \label{113}
Suppose that  $S$ is an $n$-ary multiplicative subset of a commutative Krasner $(m,n)$-hyperring $H$ with $1 \in S$. If $I$ is an $n$-ary $\delta$-$N$-hyperideal of $H$ and $I \cap S=\varnothing$, then $S^{-1}I$ is an $n$-ary $\delta_S$-$N$-hyperideal of $S^{-1}H$.
\end{theorem}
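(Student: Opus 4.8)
The plan is to verify directly the two defining requirements of an $n$-ary $\delta_S$-$N$-hyperideal: that $S^{-1}I$ is a proper hyperideal of $S^{-1}H$, and that it satisfies the defining implication. For properness I would argue by contradiction: if $S^{-1}I = S^{-1}H$, then $1_H/1 \in S^{-1}I$, and unwinding the equivalence relation on fractions produces an element of $S$ lying in $I$, contradicting $I \cap S = \varnothing$. Hence $S^{-1}I \neq S^{-1}H$.

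For the main implication I would start from elements $a_1/s_1,\ldots,a_n/s_n \in S^{-1}H$ with $g_S(a_1/s_1,\ldots,a_n/s_n) = g(a_1^n)/g(s_1^n) \in S^{-1}I$, and suppose $a_i/s_i \notin \sqrt{0_{S^{-1}H}}^{(m,n)}$ for some $1 \leq i \leq n$. By the description of $S^{-1}I$ together with the equivalence relation on fractions, clearing denominators yields some $v \in S$ with $g(v, g(a_1^n), 1_H^{(n-2)}) \in I$. Using the commutativity of $g$ and the $n$-ary semigroup associativity, I would reorganize this membership into the form $g(y_1^n) \in I$, where $y_i = a_i$ is kept as an explicit entry and the auxiliary factor $v$ is absorbed into a different slot, say $y_j = g(v,a_j,1_H^{(n-2)})$ for some $j \neq i$ and $y_k = a_k$ otherwise.

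The key reduction is the correspondence $\sqrt{0_{S^{-1}H}}^{(m,n)} = S^{-1}\sqrt{0_H}^{(m,n)}$ for the nilradical under localization; from $a_i/s_i \notin \sqrt{0_{S^{-1}H}}^{(m,n)}$ this forces $y_i = a_i \notin \sqrt{0_H}^{(m,n)}$. Applying the hypothesis that $I$ is an $n$-ary $\delta$-$N$-hyperideal of $H$ to the relation $g(y_1^n) \in I$ with $y_i \notin \sqrt{0_H}^{(m,n)}$, I obtain $g(y_1^{i-1}, 1_H, y_{i+1}^n) \in \delta(I)$, which reassembles (reversing the absorption of $v$) as $g(v, g(a_1^{i-1}, 1_H, a_{i+1}^n), 1_H^{(n-2)}) \in \delta(I)$.

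Finally I would transport this back to the localization. Since $v \in S$ becomes invertible in $S^{-1}H$, dividing through by the appropriate product of denominators gives $g_S(a_1/s_1,\ldots, 1_{S^{-1}H}, \ldots, a_n/s_n) \in S^{-1}\delta(I)$, and the stated identity $\delta_S(S^{-1}I) = S^{-1}\delta(I)$ then delivers membership in $\delta_S(S^{-1}I)$, completing the verification. I expect the main obstacle to be the careful bookkeeping with the equivalence relation in the hyperring of fractions: in the hyperoperation setting the equality of two fractions is witnessed by a multiplicative relation involving some $v \in S$ rather than a plain equation, so one must check that $g(a_1^n)/g(s_1^n) \in S^{-1}I$ genuinely yields a bona fide membership $g(v, g(a_1^n), 1_H^{(n-2)}) \in I$ inside $H$, and that the subsequent reassociations preserve both the membership and the position of the non-nilpotent entry $a_i$. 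Once the two correspondences $\sqrt{0_{S^{-1}H}}^{(m,n)} = S^{-1}\sqrt{0_H}^{(m,n)}$ and $\delta_S(S^{-1}I) = S^{-1}\delta(I)$ are in hand, the remaining manipulations are routine.
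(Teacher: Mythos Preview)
Your proposal is correct and follows essentially the same route as the paper: clear denominators to obtain $g(t,g(x_1^n),1_H^{(n-2)})\in I$, reassociate so that $x_i$ sits in a single slot, use $x_i\notin\sqrt{0_H}^{(m,n)}$ to apply the $\delta$-$N$-hyperideal condition in $H$, and then pass back to $S^{-1}H$ via $\delta_S(S^{-1}I)=S^{-1}\delta(I)$. Your write-up is in fact a bit more careful than the paper's, since you explicitly justify properness of $S^{-1}I$ and the nilradical correspondence $\sqrt{0_{S^{-1}H}}^{(m,n)}=S^{-1}\sqrt{0_H}^{(m,n)}$, both of which the paper uses tacitly.
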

\begin{proof}
Let $I$ be an $n$-ary $\delta$-$N$-hyperideal of $H$. Assume that $G(\frac{x_1}{s_1},...,\frac{x_n}{s_n}) \in S^{-1}I$ for $\frac{x_1}{s_1},...,\frac{x_n}{s_n} \in S^{-1}H$ such that $\frac{x_i}{s_i} \notin \sqrt{0_{S^{-1}H}}^{(m,n)}$ for some $1 \leq i \leq n$. 
Hence $\frac{g(x_1^n)}{g(s_1^n)} \in S^{-1}I$. This means that there  exists $t \in S$ such that $g(t,g(x_1^n),1_H^{(n-2)}) \in I$ and then $g(x_i,g(x_1^{i-1},t,x_{i+1}^n),1_H^{(n-2)}) \in I$.
 Since $x_i \notin \sqrt{0}^{(m,n)}$ and $I$ is an $n$-ary $\delta$-$N$-hyperideal of $H$, then we get the result that $g(x_1^{i-1},t,x_{i+1}^n) \in \delta(I)$. 
Therefore $G(\frac{x_1}{s_1},\cdots,\frac{x_{i-1}}{s_{i-1}},\frac{1_H}{1_H},\frac{x_{i+1}}{s_{i+1}},\cdots,\frac{x_{n}}{s_{n}})=\frac{g(x_1^{i-1},1_H,x_{i+1}^n)}{g(s_1^{i-1},1_H,s_{i+1}^n)}=\frac{g(x_1^{i-1},t,x_{i+1}^n)}{g(s_1^{i-1},t,s_{i+1}^n)} \in S^{-1}(\delta(I))=\delta_S(S^{-1}(I))$. Consequently,  $S^{-1}I$ is an $n$-ary $\delta_S$-$N$-hyperideal of $S^{-1}H$.
\end{proof}
\section{$(k,n)$-absorbing $\delta$-$N$-hyperideals}
In this section, we extend the concept of  $\delta$-$N$-hyperideals  to  the notion of $(k,n)$-absorbing $\delta$-$N$-hyperideals.
\begin{definition}
Given a  hyperideal expansion $\delta$, a proper hyperideal $I$ of a commutative Krasner $(m,n)$-hyperring $H$ is called $(k,n)$-absorbing $N$-hyperideal if  $g(a_1^{kn-k+1}) \in I$ for $a_1^{kn-k+1} \in H$ implies that $g(a_1^{(k-1)n-k+2}) \in \sqrt{0}^{(m,n)}$ or a $g$-product of $(k-1)n-k+2$ of $a_i^,$ s except $g(a_1^{(k-1)n-k+2})$ is in $\delta(I)$.
\end{definition}
\begin{example} 
If we continue with Example \ref{classes} and use its notation, then $I=\{0,3,6\}$ is a $(2,2)$-absorbing $\delta_1$-$N$-hyperring   of $\mathbb{Z}_{12}/\mathbb{Z}_{12}^\star$.
\end{example}
\begin{theorem}
Let  $I$ be a $(k,n)$-absorbing $N$-hyperideal of a commutative Krasner $(m,n)$-hyperring $H$. Then $\sqrt{I}^{(m,n)}$ is a $(k,n)$-absorbing $\delta$-$N$-hyperideal.
\end{theorem}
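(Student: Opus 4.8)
The plan is to imitate the proofs of Theorem~\ref{28} and Theorem~\ref{212}: convert membership in $\sqrt{I}^{(m,n)}$ into the statement that a suitable power lies in $I$, feed that power into the $(k,n)$-absorbing hypothesis, and then strip the power back off using that radicals are closed under taking roots. First I would record that $\sqrt{I}^{(m,n)}$ is proper: if $1_H\in\sqrt{I}^{(m,n)}$, then $g(1_H^{(n)})=1_H\in I$, forcing $I=H$ against the hypothesis that $I$ is proper. Next, take $a_1^{kn-k+1}\in H$ with $g_{(k)}(a_1^{kn-k+1})\in\sqrt{I}^{(m,n)}$. By the description of the radical recalled before Definition~2.6, there is $t\in\mathbb{N}$ with
\[ g\big(g_{(k)}(a_1^{kn-k+1})^{(t)},1_H^{(n-t)}\big)\in I \]
in the case $t\le n$; the case $t=l(n-1)+1$ is handled identically, exactly as in Theorem~\ref{28}, so I would treat only $t\le n$ in detail.

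The key algebraic step is to distribute the $t$-th power across the $k$-fold product. Writing $a_i^{[t]}:=g(a_i^{(t)},1_H^{(n-t)})$ for the $t$-th power of $a_i$, commutativity, $n$-ary associativity and the scalar identity give
\[ g\big(g_{(k)}(a_1^{kn-k+1})^{(t)},1_H^{(n-t)}\big)=g_{(k)}\big(a_1^{[t]},\ldots,a_{kn-k+1}^{[t]}\big)\in I, \]
so the $kn-k+1$ elements $a_1^{[t]},\ldots,a_{kn-k+1}^{[t]}$ satisfy the hypothesis of the $(k,n)$-absorbing property for $I$. Applying that property produces one of two alternatives: either $g_{(k-1)}(a_1^{[t]},\ldots,a_{(k-1)n-k+2}^{[t]})\in\sqrt{0}^{(m,n)}$, or some $g$-product $P$ of $(k-1)n-k+2$ of the $a_i^{[t]}$ other than this first one lies in $I$.

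Finally I would pull the power back out. In the first alternative, distributing again gives $g_{(k-1)}(a_1^{[t]},\ldots,a_{(k-1)n-k+2}^{[t]})=g\big(g_{(k-1)}(a_1^{(k-1)n-k+2})^{(t)},1_H^{(n-t)}\big)$, and since $\sqrt{0}^{(m,n)}$ is precisely the set of nilpotent elements it is closed under taking $t$-th roots, so $g_{(k-1)}(a_1^{(k-1)n-k+2})\in\sqrt{0}^{(m,n)}$. In the second alternative, $P=g\big(Q^{(t)},1_H^{(n-t)}\big)$, where $Q$ is the $g$-product of the corresponding original elements $a_i$; hence $Q\in\sqrt{I}^{(m,n)}\subseteq\delta(\sqrt{I}^{(m,n)})$, the last inclusion holding for every expansion $\delta$ (if one instead reads the hypothesis with $\delta(I)$ in place of $I$, the same conclusion follows via the compatibility $\sqrt{\delta(I)}^{(m,n)}\subseteq\delta(\sqrt{I}^{(m,n)})$ used in Theorem~\ref{212}, which holds for $\delta=\delta_1$). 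In either alternative the two options demanded by the definition of a $(k,n)$-absorbing $\delta$-$N$-hyperideal hold for $\sqrt{I}^{(m,n)}$, which finishes the argument. I expect the main obstacle to be purely bookkeeping rather than conceptual: rigorously justifying the distribution of the $t$-th power over the iterated operation $g_{(k)}$ (tracking the $1_H$-padding and the congruence $kn-k+1\equiv1\pmod{n-1}$), keeping the index set of the surviving sub-product matched between the $a_i^{[t]}$ and the $a_i$, and verifying that both $\sqrt{0}^{(m,n)}$ and $\sqrt{I}^{(m,n)}$ are root-closed so that the exponent $t$ may be removed.
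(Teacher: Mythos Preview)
Your proposal is correct and follows essentially the same route as the paper: pass to a power lying in $I$, feed the powered elements into the $(k,n)$-absorbing hypothesis, and then take roots back out of $\sqrt{0}^{(m,n)}$ and $\sqrt{I}^{(m,n)}$. The paper phrases the core step contrapositively (assume none of the other $(k-1)n-k+2$-fold $g$-products lie in $\delta(\sqrt{I}^{(m,n)})$, hence their powered versions do not lie in $I$, and conclude the first one lies in $\sqrt{0}^{(m,n)}$), whereas you argue the disjunction directly and are more explicit about the identity $g\big(g_{(k)}(a_1^{kn-k+1})^{(t)},1_H^{(n-t)}\big)=g_{(k)}(a_1^{[t]},\ldots,a_{kn-k+1}^{[t]})$; these are cosmetic differences, and your added remark on the reading ``$\delta(I)$ versus $I$'' in the hypothesis is a valid observation about an ambiguity the paper leaves implicit.
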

\begin{proof}
Assume that  $g(a_1^{kn-k+1}) \in \sqrt{I}^{(m,n)}$ for $a_1^{kn-k+1} \in H$. We presume none of the $g$-products of $(k-1)n-k+2$ of the $a_i^,$s other than $g(a_1^{(k-1)n-k+2})$ are in $\delta(\sqrt{I}^{(m,n)})$. Since $g(a_1^{kn-k+1}) \in \sqrt{I}^{(m,n)}$, then for some $t \in \mathbb{N}$ we have for $t \leq n$, $g(g(a_1^{kn-k+1})^{(t)},1_H^{(n-t)}) \in I$ or for $t>n$ with $t=l(n-1)+1$, $g_{(l)}(g(a_1^{kn-k+1})^{(t)}) \in I$. In the first possibilty, since all $g$-products of the $a_i^,$s other than $g(a_1^{(k-1)n-k+2})$ are not in $\delta(\sqrt{I}^{(m,n)})$, then they are not in $I$. Since $I$ is a $(k,n)$-absorbing $J$-hyperideal of $H$, then we have $g(g(x_1^{(k-1)n-k+2)})^{(l)},1_H^{(n-t)}) \in \sqrt{0}^{(m,n)}$ which means $g(x_1^{(k-1)n-k+2}) \in \sqrt{0}^{(m,n)}$. In the second possibilty, the claim follows by using a similar argument.
\end{proof}
\begin{theorem}
Suppose that $I$ is a hyperideal of a commutative Krasner $(m,n)$-hyperring $H$ such that $\delta(I)$ is a $(2,n)$-absorbing $N$-hyperideal. Then $I$ is a $(3,n)$-absorbing $\delta$-$N$-hyperideal of $H$.
\end{theorem}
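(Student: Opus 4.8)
The plan is to mimic the proof of Theorem \ref{27}, reducing the $(3,n)$-absorbing condition on $I$ to the $(2,n)$-absorbing condition on $\delta(I)$ by means of the associativity of the $n$-ary multiplication. I would start by assuming $g(a_1^{3n-2}) \in I$ for some $a_1^{3n-2} \in H$; since $I \subseteq \delta(I)$, this gives $g(a_1^{3n-2}) \in \delta(I)$ at once. The crucial reduction is the regrouping $g(a_1^{3n-2}) = g(g(a_1^n), a_{n+1}^{3n-2})$, valid by associativity in the $n$-ary semigroup $(H,g)$ (both sides unfold to $g(g(g(a_1^n),a_{n+1}^{2n-1}),a_{2n}^{3n-2})$). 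Writing $b = g(a_1^n)$, the right-hand side is a $(2,n)$-type $g$-product of the $2n-1$ elements $b, a_{n+1}, \dots, a_{3n-2}$, so the $(2,n)$-absorbing hypothesis on $\delta(I)$ becomes applicable.

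Applying that hypothesis to $g(b, a_{n+1}^{3n-2}) \in \delta(I)$ yields two alternatives: either $g(b, a_{n+1}^{2n-1}) \in \sqrt{0}^{(m,n)}$, or some $g$-product of $n$ of the elements $b, a_{n+1}, \dots, a_{3n-2}$ distinct from $g(b, a_{n+1}^{2n-1})$ lies in $\delta(I)$. In the first alternative, associativity gives $g(b, a_{n+1}^{2n-1}) = g(a_1^{2n-1})$, so $g(a_1^{2n-1}) \in \sqrt{0}^{(m,n)}$, which is exactly the first option in the definition of a $(3,n)$-absorbing $\delta$-$N$-hyperideal.

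For the second alternative I would split according to whether the surviving length-$n$ product contains the aggregated factor $b$. If $b$ occurs, then re-expanding $b = g(a_1^n)$ turns the product into a $g$-product of exactly $2n-1$ of the $a_i$'s, built from $a_1, \dots, a_n$ together with $n-1$ further $a_i$'s; since it is not the excluded product, the $n-1$ extra indices are not $\{n+1,\dots,2n-1\}$, so it differs from $g(a_1^{2n-1})$ while lying in $\delta(I)$, as required. If $b$ does not occur, the product is $g(a_{i_1}, \dots, a_{i_n}) \in \delta(I)$ with all $i_j \geq n+1$; because $\{n+1, \dots, 2n-1\}$ has only $n-1$ elements, at least one index satisfies $i_j \geq 2n$. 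I would then absorb $n-1$ of the remaining $a_i$'s into this product — legitimate since $\delta(I)$ is a hyperideal and hence absorbs $g$-multiplication by arbitrary elements of $H$, and there are $3n-2-n = 2n-2 \geq n-1$ unused elements available — to obtain a $g$-product of $2n-1$ of the $a_i$'s lying in $\delta(I)$; the retained index $\geq 2n$ guarantees it is distinct from $g(a_1^{2n-1})$. (Properness of $I$ is inherited from $\delta(I) \neq H$.)

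The main obstacle is precisely this last subcase: the $(2,n)$-absorbing conclusion may return a product of only $n$ factors that does not involve the aggregated element $b$, whereas the $(3,n)$-absorbing definition demands a factor count of $2n-1$. The device of padding the short product with extra $a_i$'s through the ideal-absorption property, combined with the index count showing the padded product cannot coincide with the canonical $g(a_1^{2n-1})$, is what bridges the discrepancy between the two absorbing lengths and completes the argument.
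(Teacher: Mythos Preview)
Your proposal is correct and follows the same overall strategy as the paper: regroup $g(a_1^{3n-2})$ as a $(2n-1)$-fold $g$-product by collapsing $n$ of the factors into a single element, pass to $\delta(I)$ via $I\subseteq\delta(I)$, and invoke the $(2,n)$-absorbing $N$-hyperideal hypothesis on $\delta(I)$. The paper collapses $a_1,a_{2n},\dots,a_{3n-2}$ into one factor, while you collapse $a_1,\dots,a_n$ into $b$; your choice is slightly cleaner because the nilpotence branch of the $(2,n)$-condition then reads exactly $g(b,a_{n+1}^{2n-1})=g(a_1^{2n-1})\in\sqrt{0}^{(m,n)}$, which is precisely what the $(3,n)$-definition asks for. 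You also handle explicitly the subcase in which the length-$n$ product returned by the $(2,n)$-condition does not contain the aggregated element $b$, padding it out to length $2n-1$ using the hyperideal property of $\delta(I)$ and checking the resulting product cannot equal $g(a_1^{2n-1})$; the paper's proof simply writes down two particular $(2n-1)$-products and does not discuss this case. So the two arguments are the same in spirit, with yours being more carefully completed.
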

\begin{proof}
Let $g(a_1^{3n-2}) \in I$  for $a_1^{3n-2} \in H$ but $g(a_1^{2n-1}) \notin \sqrt{0}^{(m,n)}$. This means that  $g(g(a_1,a_{2n}^{3n-2}),a_2^{2n-1}) \in I \subseteq \delta(I)$. Since $\delta(I)$ is a $(2,n)$-absorbing $N$-hyperideal of $H$ and $g(a_2^{2n-1}) \notin \sqrt{0}^{(m,n)}$, then we get the result that $g(a_1^n,a_{2n}^{3n-2}) \in \delta(I)$ or $g(a_1,a_{n+1}^{2n-1},a_{2n}^{3n-2}) \in \delta(I)$. Consequently,  $I$ is a $(3,n)$-absorbing $\delta$-$N$-hyperideal of $H$.
\end{proof}
\begin{theorem}
Assume that $I$ is a hyperideal of a commutative Krasner $(m,n)$-hyperring $H$ such that $\delta(I)$ is a $(k+1,n)$-absorbing $\delta$-$N$-hyperideal of $H$. Then $I$ is a $(k+1,n)$-absorbing $\delta$-$N$-hyperideal of $H$.
\end{theorem}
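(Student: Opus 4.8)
The plan is to imitate the proof of the analogous non-absorbing statement (Theorem~\ref{27}), where the containment $I \subseteq \delta(I)$ together with the hyperideal property of $\delta(I)$ transfers the conclusion straight back to $I$. I would run exactly the same argument, now at the level of $(k+1)n-k$ factors rather than $n$ factors, so that no induction on $k$ and no associativity rearrangement of the $g$-products is required.

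First I would unwind the target definition. To show $I$ is a $(k+1,n)$-absorbing $\delta$-$N$-hyperideal, I fix $a_1^{(k+1)n-k}\in H$ with $g(a_1^{(k+1)n-k})\in I$ and must produce the dichotomy: either $g(a_1^{kn-k+1})\in\sqrt{0}^{(m,n)}$, or some $g$-product of $kn-k+1$ of the $a_i$'s other than $g(a_1^{kn-k+1})$ lies in $\delta(I)$. The index counts $(k+1)n-(k+1)+1=(k+1)n-k$ for the total number of factors and $kn-k+1$ for the size of the distinguished sub-product are read directly off the definition, so this is just bookkeeping.

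The central step is the single observation that every hyperideal expansion satisfies $I\subseteq\delta(I)$; hence from $g(a_1^{(k+1)n-k})\in I$ I obtain $g(a_1^{(k+1)n-k})\in\delta(I)$ for free. Now I invoke the hypothesis that $\delta(I)$ is itself a $(k+1,n)$-absorbing $\delta$-$N$-hyperideal: applied to this product with the same distinguished factor $g(a_1^{kn-k+1})$, it returns precisely the dichotomy I need, and the failure clause $g(a_1^{kn-k+1})\in\sqrt{0}^{(m,n)}$ is identical in both definitions. Thus the whole statement collapses to a one-line transfer through $I\subseteq\delta(I)$.

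The only real friction I expect is matching the target of the success clause: when I apply the absorbing property of $\delta(I)$, the surviving sub-product lands in $\delta(\delta(I))$, whereas the definition for $I$ asks for membership in $\delta(I)$. I would handle this exactly as in the $(2,n)$-to-$(3,n)$ theorem above, namely by using the hypothesis in the form where the output already sits in $\delta(I)$, or by appealing to the monotonicity of $\delta$ to collapse $\delta(\delta(I))$ back into $\delta(I)$. Aside from reconciling these two output hyperideals, there is no hyperoperation manipulation to grind through.
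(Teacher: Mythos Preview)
Your approach is essentially the paper's own: pass from $g(a_1^{(k+1)n-k})\in I$ to $g(a_1^{(k+1)n-k})\in\delta(I)$ via $I\subseteq\delta(I)$, then invoke the $(k+1,n)$-absorbing $\delta$-$N$-property of $\delta(I)$ on the same tuple. The paper adds a cosmetic regrouping of the last $n$ factors but does not use it in any essential way, so there is no real divergence of strategy.

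You correctly isolate the one genuine friction point, namely that applying the definition to $\delta(I)$ lands the surviving sub-product in $\delta(\delta(I))$ rather than in $\delta(I)$. However, neither of your two proposed fixes closes this. Monotonicity of $\delta$ only gives $\delta(I)\subseteq\delta(\delta(I))$, which is the wrong inclusion; and there is no alternate ``form of the hypothesis'' available, since by definition a $(k+1,n)$-absorbing $\delta$-$N$-hyperideal $J$ places its output in $\delta(J)$, so with $J=\delta(I)$ one is stuck with $\delta(\delta(I))$. The paper does not address this either: it simply writes $\delta(I)$ in the conclusion where the definition would yield $\delta(\delta(I))$. So the gap you flagged is real and is shared with the paper's proof; it is not a defect of your plan relative to the paper. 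The statement as written goes through cleanly only under an additional idempotence-type assumption such as $\delta(\delta(I))=\delta(I)$ (which holds for the standard expansions $\delta_0$ and $\delta_1$), or if one reads ``$(k,n)$-absorbing $N$-hyperideal'' in the hypothesis as the $\delta_0$-version so that the output already sits in $\delta(I)$.
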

\begin{proof}
Suppose that  $g(a_1^{(k+1)n-(k+1)+1})
\in I$ for $a_1^{(k+1)n-(k+1)+1} \in H$ such that $g(a_1^{kn-k+1}) \notin \sqrt{0}^{(m,n)}$. So $g(a_1^{(k+1)n-(k+1)+1})=g(a_1^{kn-k},g(a_{kn-k+1}^{(k+1)n-(k+1)+1})) \in I \subseteq \delta(I)$.
Since  $\delta(I)$ is a $(k+1,n)$-absorbing $\delta$-$N$-hyperideal and $g(a_1^{kn-k+1}) \notin \sqrt{0}^{(m,n)}$, we get the result that $g(a_1^{i-1},a_{i+1}^{kn-k},g(a_{kn-k+1}^{(k+1)n-(k+1)+1})) \in \delta(I)$ for $1 \leq i \leq n$.  Thus $I$ is a $(k+1,n)$-absorbing $\delta$-$N$-hyperideal of $H$.
\end{proof}
\begin{theorem}
Let $I$ be a $\delta$-$N$-hyperideal of a commutative Krasner $(m,n)$-hyperring $H$. Then $I$ is a $(2,n)$-absorbing $\delta$-$N$-hyperideal of $H$.
\end{theorem}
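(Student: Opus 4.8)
The plan is to verify the defining condition of a $(2,n)$-absorbing $\delta$-$N$-hyperideal directly. For $k=2$ we have $kn-k+1=2n-1$ and $(k-1)n-k+2=n$, so I must show that whenever $g(a_1^{2n-1})\in I$ for $a_1^{2n-1}\in H$, either $g(a_1^n)\in\sqrt{0}^{(m,n)}$, or some $g$-product of $n$ of the $a_i$, taken other than the designated product $g(a_1^n)$, lies in $\delta(I)$. First I would dispose of the trivial branch: if $g(a_1^n)\in\sqrt{0}^{(m,n)}$ there is nothing to prove, so I may assume $g(a_1^n)\notin\sqrt{0}^{(m,n)}$.

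The key step is a regrouping of the product. Using the associativity of the $n$-ary operation I would rewrite $g(a_1^{2n-1})=g(g(a_1^n),a_{n+1}^{2n-1})\in I$, viewing the left-hand side as $g$ applied to the $n$ elements $g(a_1^n),a_{n+1},\ldots,a_{2n-1}$. Since this product lies in $I$ and its first entry $g(a_1^n)$ is, by assumption, outside $\sqrt{0}^{(m,n)}$, the hypothesis that $I$ is an $n$-ary $\delta$-$N$-hyperideal of $H$ (applied at the index $i=1$) yields $g(1_H,a_{n+1}^{2n-1})\in\delta(I)$.

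Finally I would upgrade the padding $1_H$ to a genuine factor so as to land on a product of exactly $n$ of the $a_i$. Because $\delta(I)$ is a hyperideal it absorbs multiplication, so inserting the member $g(1_H,a_{n+1}^{2n-1})$ into a $g$-product gives $g(a_1,g(1_H,a_{n+1}^{2n-1}),1_H^{(n-2)})\in\delta(I)$; flattening this expression by associativity and cancelling the $n-1$ scalar identities against $a_1$ reduces it to $g(a_1,a_{n+1}^{2n-1})\in\delta(I)$. This is a $g$-product of the $n$ elements $a_1,a_{n+1},\ldots,a_{2n-1}$, and for $n\geq 2$ its index set $\{1,n+1,\ldots,2n-1\}$ differs from that of $g(a_1^n)$, so it is distinct from the designated product. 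Hence $I$ is a $(2,n)$-absorbing $\delta$-$N$-hyperideal of $H$.

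I expect the only delicate point to be the bookkeeping in this last paragraph: one must check that the associativity relation $g(a_1,g(1_H,a_{n+1}^{2n-1}),1_H^{(n-2)})=g(a_1,a_{n+1}^{2n-1})$ genuinely holds once the scalar identities are absorbed (this uses that the iterated operation $g_{(2)}$ is symmetric in its $2n-1$ arguments, allowing the $n-1$ copies of $1_H$ to be grouped with $a_1$ via $g(a_1,1_H^{(n-1)})=a_1$), and that the surviving product really involves $n$ of the $a_i$ and is not the excluded product $g(a_1^n)$. Everything else is a routine application of the $\delta$-$N$-property together with the absorption property of the hyperideal $\delta(I)$.
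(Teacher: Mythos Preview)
Your proposal is correct and follows essentially the same route as the paper: both rewrite $g(a_1^{2n-1})=g(g(a_1^n),a_{n+1}^{2n-1})$, apply the $\delta$-$N$-property to force $g(1_H,a_{n+1}^{2n-1})\in\delta(I)$ when $g(a_1^n)\notin\sqrt{0}^{(m,n)}$, and then use that $\delta(I)$ is a hyperideal to insert a factor $a_i$ and obtain a genuine $n$-fold product of the $a_j$'s in $\delta(I)$. The paper records the slightly stronger observation that $g(a_i,a_{n+1}^{2n-1})\in\delta(I)$ for every $1\leq i\leq n$, whereas you take $i=1$; either suffices for the definition.
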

\begin{proof}
Let  $g(a_1^{2n-1}) \in I$ for $a_1^{2n-1} \in H$. Since $I$ is a $\delta$-$N$-hyperideal of $H$,  we get the result that $g(a_1^n) \in \sqrt{0}^{(m,n)}$ or $g(a_{n+1}^{2n-1}) \in \delta (I)$. Therefore we have  $g(a_i,x_{n+1}^{2n-1}) \in \delta (I)$, for $1 \leq i \leq n$, as  $\delta(I)$ is a hyperideal of $H$. Thus $I$ is $(2,n)$-absorbing $\delta$-primary.
\end{proof}
Next, we determine all integers $k>n$.
\begin{theorem}
Let  $I$ is a $(k,n)$-absorbing $\delta$-$N$-hyperideal of a commutative Krasner $(m,n)$-hyperring $H$. Then $I$ is $(s,n)$-absorbing $\delta$-$N$-hyperideal for $s>n$. 
\end{theorem}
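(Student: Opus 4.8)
The plan is to prove the statement by induction on $s$, taking the base case $s=k$ to be the hypothesis and reducing the whole assertion to a single monotonicity step: if $I$ is a $(k,n)$-absorbing $\delta$-$N$-hyperideal of $H$, then it is a $(k+1,n)$-absorbing $\delta$-$N$-hyperideal of $H$. Iterating this step finitely many times then yields that $I$ is $(s,n)$-absorbing $\delta$-$N$ for every $s\ge k$, which in particular covers the stated range.

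For the inductive step I would start from $g_{(k+1)}(a_1^{(k+1)(n-1)+1}) \in I$ with $a_1^{(k+1)(n-1)+1} \in H$. Using the $n$-ary associativity of $g$ (axiom (2)), group the first $n$ arguments into the single element $b=g(a_1^n)$, so that $g_{(k+1)}(a_1^{(k+1)(n-1)+1}) = g_{(k)}(b, a_{n+1}^{(k+1)(n-1)+1}) \in I$ is a $g_{(k)}$-product of exactly $k(n-1)+1$ entries, namely $b$ together with the remaining $k(n-1)$ of the $a_i$. Now apply the hypothesis that $I$ is $(k,n)$-absorbing $\delta$-$N$ to this product, invoking commutativity (axiom (5)) whenever a reordering of the entries is convenient.

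The hypothesis returns one of two branches. In the first branch the distinguished sub-product $g_{(k-1)}$ of the first $(k-1)(n-1)+1$ entries lies in $\sqrt{0}^{(m,n)}$; re-expanding $b$ by associativity identifies this sub-product with $g_{(k)}(a_1^{k(n-1)+1})$, so we land exactly in the first alternative of the $(k+1,n)$-absorbing definition. In the second branch some other $g$-product $P$ of $(k-1)(n-1)+1$ of the entries lies in $\delta(I)$. If $P$ involves $b$, expanding $b$ turns $P$ into a genuine $g$-product of $(k-1)(n-1)+1-1+n = k(n-1)+1$ of the original $a_i$ lying in $\delta(I)$. If $P$ does not involve $b$, then $P$ is already a product of $(k-1)(n-1)+1$ of the $a_i$; since $\delta(I)$ is a hyperideal it absorbs further $g$-multiplication, so $g(P, a_{j_1}, \ldots, a_{j_{n-1}})$ formed with $n-1$ further unused $a_i$ stays inside $\delta(I)$ and is a $g$-product of $k(n-1)+1$ of the $a_i$. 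In every case we obtain either $g_{(k)}(a_1^{k(n-1)+1}) \in \sqrt{0}^{(m,n)}$ or a $g$-product of $k(n-1)+1$ of the $a_i$ other than the distinguished one inside $\delta(I)$, which is precisely the $(k+1,n)$-absorbing $\delta$-$N$ condition.

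I expect the main obstacle to be the index-and-level bookkeeping: one must verify that each flattened expression has exactly $k(n-1)+1$ arguments and really coincides with the iteration $g_{(k)}$, which rests on axioms (2) and (5). The single genuinely delicate point is the subcase of the second branch in which the grouped block $b$ does not occur in $P$, for there the product emerging from the $(k,n)$-hypothesis is too short and must be lengthened to size $k(n-1)+1$ using the absorption property of the hyperideal $\delta(I)$; here one should check that enough of the $a_i$ remain available to pad with and that the resulting product is distinct from the excluded sub-product $g_{(k)}(a_1^{k(n-1)+1})$.
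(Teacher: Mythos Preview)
Your proposal is correct and follows essentially the same strategy as the paper: reduce to the single inductive step $(k,n)\Rightarrow(k+1,n)$ by grouping the first $n$ arguments into one element and applying the $(k,n)$-absorbing hypothesis to the resulting list of $k(n-1)+1$ entries, then iterate. Your case analysis is in fact more careful than the paper's---particularly the subcase where the grouped block $b$ does not appear in the witnessing product $P$ and one must lengthen $P$ to size $k(n-1)+1$ using that $\delta(I)$ is a hyperideal---a point the paper's terse argument leaves implicit.
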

\begin{proof}
Let  $g(a_1^{(k+1)n-(k+1)+1}) \in I$ for $a_1^{(k+1)n-(k+1)+1} \in H$. Put $g(a_1^{n+2})=a$. Since $I$  is $(k,n)$-absorbing $\delta$-$N$-hyperideal, then we obtain $g(a,\cdots,a_{(k+1)n-(k+1)+1}) \in \sqrt{0}^{(m,n)}$ or a $g$-product of $kn-k+1$ of the $a_i^,$s except $g(a,\cdots,a_{(k+1)n-(k+1)+1})$ is in $\delta(I)$. This implies that  $g(a_i,a_{n+3}^{(k+1)n-(k+1)+1}) \in \delta(I)$ for all $1 \leq i \leq n+2$ which implies $I$ is a $(k+1,n)$-absorbing $\delta$-$N$-hyperideal. Consequently, $I$  is an $(s,n)$-absorbing $\delta$-$N$-hyperideal for $s>n$.
\end{proof}
\section{$n$-ary $S$-$N$-hyperideals} 
\begin{definition}
Assume that $S$ is  an $n$-ary multiplicative subset of a commutative Krasner $(m,n)$-hyperring $H$. A  hyperideal $I$ of $R$ with $I \cap S=\varnothing$ is said to be an $n$-ary $S$-$N$-hyperideal if there exists an $s \in S$ such that for all $x_1^n \in H$ if $g(x_1^n) \in I$ with $g(s,x_i,1_H^{(n-2)}) \notin \sqrt{0}^{(m,n)}$ for some $1 \leq i \leq n$, then $g(x_1^{i-1},s,x_{i+1}^n) \in I$. This element $s$ in $ S$ is called an $S$-element of $I$.
\end{definition}
\begin{example}
Let $H=\{0,1,a\}$. Consider commutative Krasner $(3,3)$-hyperring $(H,f,g)$ that 3-ary  $g$ is defined as $g(1,1,1)=1$,  $g(1,1,a)=g(1,a,a)=g(a,a,a)=a$ and for all $x,y \in H$, $g(0,x,y)=0$   and 3-ary hyperoeration $f$ is defined as 
$f(0,0,0)=0$, $f(0,0,1)=1$,  $f(0,1,1)=1$, $f(1,1,1)=1$, $f(1,1,a)=H$, $f(0,1,a)=H$, $f(0,0,a)=a$, $f(0,a,a)=a$, $f(1,a,a)=H$, $f(a,a,a)=a$.
In the Krasner $(3,3)$-hyperring, $I=\{0,a\}$ is an 3-ary $S$-$N$-hyperideal of $H$ such that 3-ary multiplicative subset $S$ is $\{1,2 \}$. 
\end{example}
Now we give a charactrization of an $n$-ary $S$-$N$-hyperideal.
\begin{theorem} \label{31} 
Assume that $S$ is an $n$-ary multiplicative subset of a commutative Krasner $(m,n)$-hyperring $H$ and $I$ is a hyperideal of $H$ disjoint with $S$. Then $I$ is an $n$-ary $S$-$N$-hyperideal of $H$ if and only if there exists $s \in S$, for all hyperideals $I_1^n$ of $H$, if $g(I_1^n) \subseteq I$, then $g(s,I_i,1^{(n-1)}) \subseteq \sqrt{0}^{(m,n)}$ for some $1 \leq i \leq n$ or $g(I_1^{i-1},s,I_{i+1}^n) \subseteq I$.
\end{theorem}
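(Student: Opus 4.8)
The plan is to prove both implications by exploiting the standard passage between elements and the principal hyperideals they generate, exactly as in the proof of Theorem \ref{14}. In each direction the single element $s \in S$ that witnesses one characterization will serve as the witness for the other, so the real content is to translate the element-wise condition into a statement about arbitrary hyperideals and back.

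For the ``if'' direction, suppose the hyperideal condition holds with witness $s \in S$, and take $x_1^n \in H$ with $g(x_1^n) \in I$ and $g(s,x_i,1_H^{(n-2)}) \notin \sqrt{0}^{(m,n)}$. First I would set $I_j = \langle x_j \rangle$ for each $j$ and observe, using the commutativity and associativity of $g$ together with the description $\langle a \rangle = g(H,a,1_H^{(n-2)})$, that $g(I_1^n)=g(\langle x_1 \rangle,\dots,\langle x_n \rangle) \subseteq \langle g(x_1^n) \rangle \subseteq I$. Applying the hypothesis to these hyperideals yields the disjunction. Since $x_i \in \langle x_i \rangle = I_i$ and $g(s,x_i,1_H^{(n-2)}) \notin \sqrt{0}^{(m,n)}$, the alternative $g(s,I_i,1_H^{(n-1)}) \subseteq \sqrt{0}^{(m,n)}$ fails at the index $i$, so we are forced into $g(I_1^{i-1},s,I_{i+1}^n) \subseteq I$; specialising each $\langle x_j \rangle$ back to its generator then gives $g(x_1^{i-1},s,x_{i+1}^n) \in I$, as required.

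For the ``only if'' direction, let $s$ be an $S$-element of $I$ and let $I_1^n$ be hyperideals with $g(I_1^n) \subseteq I$. If $g(s,I_i,1_H^{(n-1)}) \subseteq \sqrt{0}^{(m,n)}$ already holds for some $i$, the first alternative is satisfied. Otherwise I would fix an index $i$ and an element $a_i \in I_i$ with $g(s,a_i,1_H^{(n-2)}) \notin \sqrt{0}^{(m,n)}$; then for every choice of $a_j \in I_j$ with $j \neq i$ we have $g(a_1^n) \in g(I_1^n) \subseteq I$, so the defining property of the $n$-ary $S$-$N$-hyperideal gives $g(a_1^{i-1},s,a_{i+1}^n) \in I$. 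Letting the free entries $a_j$ range over all of $I_j$ then upgrades this membership to $g(I_1^{i-1},s,I_{i+1}^n) \subseteq I$, the second alternative.

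I expect the main obstacle to be bookkeeping rather than a deep difficulty: I must keep the index $i$ arising in the hypothesis aligned with the index appearing in the conclusion, and read the quantifier ``for some $i$'' in the disjunction so that its two alternatives refer to a common witnessing index. The one genuinely non-formal point is the set-theoretic upgrade in each direction—namely that ranging the generators recovers $g(I_1^{i-1},s,I_{i+1}^n) \subseteq I$ from the individual memberships, and that $g(\langle x_1 \rangle,\dots,\langle x_n \rangle) \subseteq \langle g(x_1^n) \rangle$—both of which rest on the distributivity axiom $(3)$ and the commutativity axiom $(5)$ of the Krasner $(m,n)$-hyperring and should be recorded carefully.
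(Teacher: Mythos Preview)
Your proposal is correct and follows essentially the same approach as the paper: both directions are handled by passing between elements and the principal hyperideals they generate, exactly as in Theorem~\ref{14}. The paper's $\Longleftarrow$ is a one-line ``by the assumption, we are done'' which you have written out in full, and its $\Longrightarrow$ is phrased as a short contradiction argument rather than your direct case split, but the underlying content is identical. Your explicit remark about aligning the index $i$ across the disjunction is well taken---the paper's phrasing is loose here, and your care on this point is an improvement over the original.
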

\begin{proof}
$\Longrightarrow$ Let $I$ be an $n$-ary $S$-$N$-hyperideal of $H$. Suppose that  $g(I_1^n) \in I$ for some hyperideals $I_1^n$ of $H$ such that $g(s,I_i,1_H^{(n-2)}) \nsubseteq \sqrt{0}^{(m,n)}$ and $g(I_1^{i-1},s,I_{i+1}^n) \nsubseteq I$ for all $s \in S$. Then there exists $a_i \in I_i$ for each $1 \leq i \leq n$ such that $g(a_1^n) \in I$ but $g(s,a_i,1_H^{(n-2)}) \notin \sqrt{0}^{(m,n)}$ and $g(a_1^{i-1},s,a_{i+1}^n) \notin I$, a contradiction. \\
$\Longleftarrow$ Suppose that $g(x_1^n) \in I$ for some $x_1^n \in H$. Then $ g(\langle x_1 \rangle, \cdots, \langle x_n \rangle) \subseteq I$. By the assumption, we are done.
\end{proof} 
\begin{theorem}
Let $S$ be an $n$-ary multiplicative subset of a commutative Krasner $(m,n)$-hyperring $H$. If $I_1^n$ are some $n$-ary $S$-$N$-hyperideals of $H$, then $\bigcap_{t=1}^nI_t$ is an $n$-ary $S$-$N$-hyperideal of $H$.
\end{theorem}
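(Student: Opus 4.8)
The plan is to exhibit a single $S$-element that simultaneously works for the intersection, built out of the $S$-elements of the factors. Write $I=\bigcap_{t=1}^n I_t$. Since each $I_t$ is a hyperideal with $I_t\cap S=\varnothing$, the set $I$ is again a hyperideal and $I\cap S\subseteq I_1\cap S=\varnothing$, so $I$ is a (necessarily proper) hyperideal disjoint from $S$; this part is routine. For each $t$ let $s_t\in S$ be an $S$-element of $I_t$, and set $s=g(s_1^n)$. Because $S$ is $n$-ary multiplicative, $s\in S$, and the goal is to show that $s$ is an $S$-element of $I$.

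To verify this I would start from arbitrary $x_1^n\in H$ with $g(x_1^n)\in I$, and suppose $g(s,x_i,1_H^{(n-2)})\notin\sqrt{0}^{(m,n)}$ for some fixed $1\leq i\leq n$. The first key step is to transfer this non-nilpotency to each individual $s_t$. Using associativity and commutativity of $g$ together with $s=g(s_1^n)$, one rewrites
\[ g(s,x_i,1_H^{(n-2)})=g\big(g(s_t,x_i,1_H^{(n-2)}),s_1,\ldots,s_{t-1},s_{t+1},\ldots,s_n\big). \]
Since $\sqrt{0}^{(m,n)}$ is a hyperideal, it absorbs $g$-products; hence if $g(s_t,x_i,1_H^{(n-2)})$ lay in $\sqrt{0}^{(m,n)}$ the whole right-hand side would too, contradicting the hypothesis. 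Thus $g(s_t,x_i,1_H^{(n-2)})\notin\sqrt{0}^{(m,n)}$ for every $t$.

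With this in hand I would fix $t$ and apply the defining property of $I_t$. From $g(x_1^n)\in I\subseteq I_t$, the fact that $s_t$ is an $S$-element of $I_t$, and $g(s_t,x_i,1_H^{(n-2)})\notin\sqrt{0}^{(m,n)}$, we obtain $g(x_1^{i-1},s_t,x_{i+1}^n)\in I_t$. Reassociating once more,
\[ g(x_1^{i-1},s,x_{i+1}^n)=g\big(g(x_1^{i-1},s_t,x_{i+1}^n),s_1,\ldots,s_{t-1},s_{t+1},\ldots,s_n\big), \]
so this element lies in the hyperideal $I_t$. As $t$ was arbitrary, $g(x_1^{i-1},s,x_{i+1}^n)\in\bigcap_{t=1}^n I_t=I$, which is exactly what is required for $s$ to be an $S$-element of $I$.

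The hard part is the $n$-ary bookkeeping behind the two regroupings: one must check that the $n$-fold products can be reassociated so that $g(s_t,x_i,1_H^{(n-2)})$ (respectively $g(x_1^{i-1},s_t,x_{i+1}^n)$) appears as a single factor while the remaining $n-1$ scalars $s_j$ with $j\neq t$ fill the other slots. This is precisely where the hypotheses that there are exactly $n$ factors $I_1,\ldots,I_n$ and that $g$ is commutative are used. A secondary point worth stating explicitly is that the index $i$ is fixed at the outset and the same $i$ is transferred to every $I_t$, so no incompatibility between the factors can arise. An alternative, essentially equivalent route is to run the same two factorizations through the ideal-theoretic characterization of Theorem \ref{31}.
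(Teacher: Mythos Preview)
Your proof is correct and follows the same approach as the paper: choose the $S$-element for the intersection to be $s=g(s_1^n)$, then verify it works. The paper's argument is extremely terse (it essentially writes down $s=\Pi_{t=1}^n s_t$ and asserts the conclusion), whereas you have spelled out the two key points the paper leaves implicit---the transfer of non-nilpotency from $s$ to each $s_t$ via the hyperideal property of $\sqrt{0}^{(m,n)}$, and the reassociation showing $g(x_1^{i-1},s,x_{i+1}^n)\in I_t$ once $g(x_1^{i-1},s_t,x_{i+1}^n)\in I_t$.
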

\begin{proof}
Let $I_1^n$ be $n$-ary $S$-$N$-hyperideals of  $H$. Suppose that for each $1 \leq t \leq n$,  there exists $s_t \in S$ such that if $g(a_1^n) \in I_t$ for some $a_1^n \in H$, then $g(s_t,a_i,1_H^{(n-2)}) \in \sqrt{0}^{(m,n)}$ or $g(a_1^{i-1},s_t,a_{i+1}^n) \in I_t$. Now, assume that $g(a_1^n) \in \bigcap_{t=1}^n I_t$ for some $a_1^n \in H$. This means that $g(a_1^n) \in  I_t$ for each $1 \leq t \leq n$. Put $\Pi_{t =1}^n s_t \in S$. Then we get the result thet $g(s,a_i,1_H^{(n-2)}) \in \sqrt{0}^{(m,n)}$ or $g(a_1^{i-1},s,a_{i+1}^n) \in \bigcap_{t=1}^n I_t$, as claimed.
\end{proof}
\begin{theorem} \label{32} 
Assume that $S$ is an $n$-ary multiplicative subset of a commutative Krasner $(m,n)$-hyperring $H$ and $I$ is a hyperideal of $H$ such that $I \cap S =\varnothing$. If $E_s=\{y \in H \ \vert \ g(y,s,1_H^{(n-2)}) \in I\}$ is an $n$-ary $N$-hyperideal of  $H$ for some $s \in S$, then $I$ is an $n$-ary $S$-$N$-hyperideal of $H$.
\end{theorem}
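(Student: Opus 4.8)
The plan is to show that the very element $s$ for which $E_s$ is an $n$-ary $N$-hyperideal already serves as an $S$-element of $I$. So I would fix such an $s\in S$, take $x_1^n\in H$ with $g(x_1^n)\in I$ and $g(s,x_i,1_H^{(n-2)})\notin\sqrt{0}^{(m,n)}$ for some $1\leq i\leq n$, and aim to produce $g(x_1^{i-1},s,x_{i+1}^n)\in I$, which is exactly the defining condition of an $n$-ary $S$-$N$-hyperideal.

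First I would transfer the hypothesis into $E_s$. Since $g(x_1^n)\in I$ and $I$ is a hyperideal, multiplying by $s$ (and by copies of $1_H$) keeps us inside $I$, that is $g(g(x_1^n),s,1_H^{(n-2)})\in I$; by the very definition of $E_s$ this says $g(x_1^n)\in E_s$. Next I would establish $x_i\notin\sqrt{0}^{(m,n)}$, which is the hypothesis needed to invoke the $N$-hyperideal property. This follows by contraposition: $\sqrt{0}^{(m,n)}$ is a hyperideal (being the intersection of all $n$-ary prime hyperideals), so $x_i\in\sqrt{0}^{(m,n)}$ would force $g(s,x_i,1_H^{(n-2)})\in\sqrt{0}^{(m,n)}$, contradicting our assumption. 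Having $g(x_1^n)\in E_s$, $x_i\notin\sqrt{0}^{(m,n)}$, and $E_s$ an $n$-ary $N$-hyperideal, I then obtain $g(x_1^{i-1},1_H,x_{i+1}^n)\in E_s$.

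Finally I would unwind this last membership. By definition of $E_s$ it reads $g(g(x_1^{i-1},1_H,x_{i+1}^n),s,1_H^{(n-2)})\in I$. Using associativity and commutativity of $g$ together with the scalar-identity property of $1_H$, the $1_H$ occupying the $i$-th slot is absorbed and replaced by $s$, while the surplus identity factors collapse, so that $g(g(x_1^{i-1},1_H,x_{i+1}^n),s,1_H^{(n-2)})=g(x_1^{i-1},s,x_{i+1}^n)$, giving $g(x_1^{i-1},s,x_{i+1}^n)\in I$ as required. Hence $I$ is an $n$-ary $S$-$N$-hyperideal of $H$ with $S$-element $s$.

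I expect Steps one through three to be routine unfoldings of definitions, so the only delicate point is the concluding bookkeeping: carefully re-associating the nested $g$-product of the $n$ factors together with $s$ and the extra identities so that the single $s$ lands in position $i$. Once the commutativity and identity axioms are applied cleanly there, the argument closes immediately.
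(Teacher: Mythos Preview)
Your proposal is correct and follows essentially the same argument as the paper: pass from $g(x_1^n)\in I$ to $g(x_1^n)\in E_s$, invoke the $N$-hyperideal property of $E_s$ with $x_i\notin\sqrt{0}^{(m,n)}$, and then unwind the membership $g(x_1^{i-1},1_H,x_{i+1}^n)\in E_s$ via the identity $g(g(x_1^{i-1},1_H,x_{i+1}^n),s,1_H^{(n-2)})=g(x_1^{i-1},s,x_{i+1}^n)$. You even justify the implication $g(s,x_i,1_H^{(n-2)})\notin\sqrt{0}^{(m,n)}\Rightarrow x_i\notin\sqrt{0}^{(m,n)}$ more explicitly than the paper does.
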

\begin{proof}
Let $E_s$ be an $n$-ary $N$-hyperideal of  $H$ for some $s \in S$. Suppose that $g(x_1^n) \in I$ for $x_1^n \in H$ such that $g(s,x_i,1_H^{(n-2)}) \notin \sqrt{0}^{(m,n)}$ for some $1 \leq i \leq n$. Therefore $g(g(x_1^n),s,1_H^{(n-2)})) \in I$ and so $g(x_1^n) \in E_s$. Since $E_s$ is an $n$-ary $N$-hyperideal of  $H$ for some $s \in S$ and $x_i \notin \sqrt{0}^{(m,n)}$, we get the result that $g(x_1^{i-1},1_H,x_{i+1}^n) \in E_s$ which means $g(g(x_1^{i-1},1_H,x_{i+1}^n),s,1_H^{(n-2)})=g(x_1^{i-1},s,x_{i+1}^n) \in I$. Consequently, $I$ is an $n$-ary $S$-$N$-hyperideal of $H$.
\end{proof}
In the following theorem, we determine a condition on $I$ when the converse holds.
\begin{theorem} \label{33} 
Assume that $S$ is an $n$-ary multiplicative subset of a commutative Krasner $(m,n)$-hyperring $H$ and $I$ is a hyperideal of $H$ such that $I \cap S =\varnothing$. If $I$ is an $n$-ary $S$-$N$-hyperideal of $H$ and $F_s=\{y \in H \ \vert \ g(y,s,1_H^{(n-2)}) \in \sqrt{0}^{(m,n)}\}$ is an $n$-ary $N$-hyperideal for an $S$-element $s \in S$ of $I$, then $E_s=\{y \in H \ \vert \ g(y,s,1_H^{(n-2)}) \in I\}$ is an $n$-ary $N$-hyperideal of  $H$.
\end{theorem}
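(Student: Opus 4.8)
The plan is to verify directly that $E_s$ satisfies the defining property of an $n$-ary $N$-hyperideal, using the $S$-$N$-hyperideal property of $I$ together with the containment $F_s \subseteq \sqrt{0}^{(m,n)}$ that Theorem \ref{11} extracts from the hypothesis on $F_s$. First I would record that $E_s$ is a proper hyperideal: it is a hyperideal because it is the residual $\{y \in H \mid g(y,s,1_H^{(n-2)}) \in I\}$, and it is proper since $g(1_H,s,1_H^{(n-2)}) = s \notin I$ (as $I \cap S = \varnothing$), so $1_H \notin E_s$.

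Next, take $x_1^n \in H$ with $g(x_1^n) \in E_s$ and $x_i \notin \sqrt{0}^{(m,n)}$ for some $1 \leq i \leq n$; the goal is $g(x_1^{i-1},1_H,x_{i+1}^n) \in E_s$. By definition of $E_s$ we have $g(g(x_1^n),s,1_H^{(n-2)}) \in I$, and by associativity and commutativity of $g$ together with the scalar identity this element may be regrouped as $g(x_1^{i-1}, g(x_i,s,1_H^{(n-2)}), x_{i+1}^n) \in I$. Thus it is an $n$-ary $g$-product whose factor in slot $i$ is $g(x_i,s,1_H^{(n-2)})$ and whose remaining factors are the $x_j$. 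The delicate point is certifying that the $s$-scaling of this slot avoids the nilradical. Since $F_s$ is an $n$-ary $N$-hyperideal, Theorem \ref{11} gives $F_s \subseteq \sqrt{0}^{(m,n)}$. Because $x_i \notin \sqrt{0}^{(m,n)}$ we get $x_i \notin F_s$, hence $g(x_i,s,1_H^{(n-2)}) \notin \sqrt{0}^{(m,n)}$; applying $F_s \subseteq \sqrt{0}^{(m,n)}$ once more to this element (it too lies outside $\sqrt{0}^{(m,n)}$, hence outside $F_s$) yields $g(s,g(x_i,s,1_H^{(n-2)}),1_H^{(n-2)}) \notin \sqrt{0}^{(m,n)}$.

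With this in hand I would apply the $S$-$N$-hyperideal property of $I$ (for its $S$-element $s$) to the tuple $(x_1,\dots,x_{i-1}, g(x_i,s,1_H^{(n-2)}), x_{i+1},\dots,x_n)$: its $g$-product lies in $I$ and the $s$-scaling of its $i$-th entry is outside $\sqrt{0}^{(m,n)}$, so replacing that entry by $s$ gives $g(x_1^{i-1}, s, x_{i+1}^n) \in I$. Finally, again by associativity and the scalar identity, $g(x_1^{i-1}, s, x_{i+1}^n) = g(g(x_1^{i-1},1_H,x_{i+1}^n), s, 1_H^{(n-2)})$, and the membership of this in $I$ is exactly the statement $g(x_1^{i-1},1_H,x_{i+1}^n) \in E_s$. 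Hence $E_s$ is an $n$-ary $N$-hyperideal of $H$.

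I expect the only genuine subtlety to be the slot-$i$ computation: the element of $I$ coming from $E_s$ already carries one factor of $s$, and the $S$-$N$-hyperideal property introduces a second, so the relevant non-nilpotency condition concerns $g(s,g(x_i,s,1_H^{(n-2)}),1_H^{(n-2)})$ rather than merely $g(s,x_i,1_H^{(n-2)})$. This is precisely why the hypothesis is framed through $F_s$, and the twofold use of $F_s \subseteq \sqrt{0}^{(m,n)}$ is what pushes the argument through. The regrouping identities are routine once one tracks positions carefully and uses commutativity to move $s$ and the identities into the needed slots.
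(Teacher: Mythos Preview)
Your proof is correct. Both you and the paper rely on Theorem~\ref{11} applied to $F_s$ and on the $S$-$N$-hyperideal property of $I$, but you organize the computation differently. The paper regroups $g(g(x_1^n),s,1_H^{(n-2)})$ as $g(x_i,\,g(x_1^{i-1},s,x_{i+1}^n),\,1_H^{(n-2)})$, isolating $x_i$, and then applies the $S$-$N$ property \emph{twice}: first to this product (yielding a case split according to whether $g(s,x_i,1_H^{(n-2)})\in\sqrt{0}^{(m,n)}$), and then, in the residual case, to the product $g(g(x_1^{i-1},1_H,x_{i+1}^n),g(s^{(2)},1_H^{(n-2)}),1_H^{(n-2)})\in I$, where the properness of $F_s$ (so $s\notin\sqrt{0}^{(m,n)}$) rules out $g(s^{(3)},1_H^{(n-3)})\in\sqrt{0}^{(m,n)}$. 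Your regrouping instead attaches $s$ to $x_i$, producing $g(x_1^{i-1},\,g(x_i,s,1_H^{(n-2)}),\,x_{i+1}^n)\in I$, and then a \emph{single} application of the $S$-$N$ property suffices; the price is the twofold use of $F_s\subseteq\sqrt{0}^{(m,n)}$ to certify $g(s,g(x_i,s,1_H^{(n-2)}),1_H^{(n-2)})\notin\sqrt{0}^{(m,n)}$. Your argument avoids the case analysis and the contradiction step, so it is somewhat more streamlined; the paper's version, on the other hand, makes the role of $s\notin\sqrt{0}^{(m,n)}$ (equivalently, the properness of $F_s$) more explicit.
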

\begin{proof}
Let $g(x_1^n) \in E_s$ for $x_1^n \in H$. Then we have $g(g(x_1^n),s,1_H^{(n-2)}) \in I$ and so $g(x_i,g(x_1^{i-1},s,x_{i+1}^n),1_H^{(n-2)}) \in I$. Since $I$ is an $n$-ary $S$-$N$-hyperideal of $H$, we get the result that either $g(s,x_i,1_H^{(n-2)}) \in \sqrt{0}^{(m,n)}$ or $g(g(x_1^{i-1},s,x_{i+1}^n),s,1_H^{(n-2)})=g(g(x_1^{i-1},1_H,x_{i+1}^n),g(s^{(2)},1_H^{(n-2)}),1_H^{(n-2)}) \in I$. In the former case, by Theorem \ref{11} we conclude that $F_s=\sqrt{0}^{(m,n)}$, as $F_s$ is an $n$-ary $N$-hyperideal. Therefore $x_i \in \sqrt{0}^{(m,n)}$. In the second case, suppose that $g(x_1^{i-1},s,x_{i+1}^n) \notin I$. Then we obtain $g(g(s^{(2},1_H^{(n-2)}),s,1_H^{(n-2)})=g(s^{(3)},1_H^{(n-3)}) \in \sqrt{0}^{(m,n)}$. It implies that $s \in \sqrt{0}^{(m,n)}$, a contradiction. Then we conclude that $g(x_1^{i-1},s,x_{i+1}^n) \in I$ which means $g(x_1^{i-1},1_H,x_{i+1}^n) \in E_s$ which implies $E_s$ is an $n$-ary $N$-hyperideal of  $H$.
\end{proof}
\begin{theorem}
Let $S \subseteq S^{\prime}$ be two $n$-ary multiplicative subsets of a commutative Krasner $(m,n)$-hyperring $H$ and $I$ be an $n$-ary $S^{\prime}$-$N$-hyperideal of $H$. If for each $s \in S^{\prime}$, there is an element $s^{\prime} \in S^{\prime}$ with $g(s,s^{\prime},1_H^{(n-2)}) \in S$, then $I$ is an $n$-ary $S$-$N$-hyperideal of $H$.
\end{theorem}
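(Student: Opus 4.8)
The plan is to produce an explicit $S$-element of $I$ manufactured from the given $S'$-element together with the hypothesis relating $S$ and $S'$. First I would dispose of the disjointness requirement: since $S \subseteq S'$ and $I \cap S' = \varnothing$ (as $I$ is an $n$-ary $S'$-$N$-hyperideal), we get $I \cap S \subseteq I \cap S' = \varnothing$, so $I$ is at least eligible to be an $n$-ary $S$-$N$-hyperideal.

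Next, let $s_0 \in S'$ be an $S'$-element of $I$. By the hypothesis there exists $s_0' \in S'$ with $s^{\ast} := g(s_0, s_0', 1_H^{(n-2)}) \in S$, and I claim $s^{\ast}$ is an $S$-element of $I$. To verify this, suppose $g(x_1^n) \in I$ for some $x_1^n \in H$ with $g(s^{\ast}, x_i, 1_H^{(n-2)}) \notin \sqrt{0}^{(m,n)}$ for some $1 \leq i \leq n$.

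The key step is to transfer the non-nilpotency hypothesis from $s^{\ast}$ back to $s_0$. Using the associativity of the $n$-ary semigroup $(H,g)$, its commutativity, and the scalar identity $1_H$, one rewrites $g(s^{\ast}, x_i, 1_H^{(n-2)}) = g(s_0', g(s_0, x_i, 1_H^{(n-2)}), 1_H^{(n-2)})$. Since $\sqrt{0}^{(m,n)}$ is a hyperideal, being an intersection of $n$-ary prime hyperideals, the membership $g(s_0, x_i, 1_H^{(n-2)}) \in \sqrt{0}^{(m,n)}$ would force $g(s^{\ast}, x_i, 1_H^{(n-2)}) \in \sqrt{0}^{(m,n)}$; contraposing, the assumption $g(s^{\ast}, x_i, 1_H^{(n-2)}) \notin \sqrt{0}^{(m,n)}$ yields $g(s_0, x_i, 1_H^{(n-2)}) \notin \sqrt{0}^{(m,n)}$. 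Now the $S'$-$N$-hyperideal property of $I$ relative to its $S'$-element $s_0$ applies to $g(x_1^n) \in I$ and delivers $g(x_1^{i-1}, s_0, x_{i+1}^n) \in I$.

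Finally, I would multiply this membership by $s_0'$: once more by associativity and commutativity, $g(x_1^{i-1}, s^{\ast}, x_{i+1}^n) = g(s_0', g(x_1^{i-1}, s_0, x_{i+1}^n), 1_H^{(n-2)})$, and since $I$ is a hyperideal absorbing $g$-multiplication and $g(x_1^{i-1}, s_0, x_{i+1}^n) \in I$, this product lies in $I$. Hence $g(x_1^{i-1}, s^{\ast}, x_{i+1}^n) \in I$, so $s^{\ast}$ is an $S$-element of $I$ and $I$ is an $n$-ary $S$-$N$-hyperideal of $H$. I expect the main obstacle to be purely notational: making the two $n$-ary reassociations precise via axioms (2), (3) and (5) of the Krasner $(m,n)$-hyperring, rather than any conceptual difficulty, the substance being the single contrapositive observation that $\sqrt{0}^{(m,n)}$ is a hyperideal.
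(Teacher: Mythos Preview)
Your proof is correct and follows essentially the same route as the paper: from an $S'$-element $s_0$ of $I$ you build $s^\ast=g(s_0,s_0',1_H^{(n-2)})\in S$ and verify it is an $S$-element by multiplying both the nilpotency and the membership conditions by $s_0'$. The only cosmetic difference is that the paper argues via the disjunction ``$g(s_0,x_i,1_H^{(n-2)})\in\sqrt{0}^{(m,n)}$ or $g(x_1^{i-1},s_0,x_{i+1}^n)\in I$'' and then multiplies both cases, whereas you phrase the first half contrapositively; your explicit check that $I\cap S=\varnothing$ is a welcome addition the paper omits.
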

\begin{proof}
Let $g(a_1^n) \in I$. Since $I$ is an $n$-ary $S^{\prime}$-$N$-hyperideal of $H$,  we have either $g(s,a_i,1_H^{(n-2)}) \in \sqrt{0}^{(m,n)}$  or $g(a_1^{i-1},s,a_{i+1}^n) \in I$ for a $S^{\prime}$-element $s \in S^{\prime}$ of $I$. By the assumption, there exists $s^{\prime} \in S^{\prime}$ such that $s^{\prime \prime}=g(s,s^{\prime},1_H^{(n-2)}) \in S$. From $g(s,a_i,1_H^{(n-2)}) \in \sqrt{0}^{(m,n)}$ , it follows that $g(s^{\prime},g(s,a_i,1_H^{(n-2)}),1_H^{(n-2)}) \in \sqrt{0}^{(m,n)}$. Also, from $g(a_1^{i-1},s,a_{i+1}^n) \in I$ it follows that $g(s^{\prime},g(a_1^{i-1},s,a_{i+1}^n),1_H^{(n-2)}) \in I$. Consequently, we get the result that $g(s^{\prime \prime},a_i,1_H^{(n-2)}) \in \sqrt{0}^{(m,n)}$ or $g(a_1^{i-1},s^{\prime \prime},a_{i+1}^n) \in I$, as needed.
\end{proof}
\begin{theorem} \label{34}
Let $S \subseteq S^{\prime}$ be two $n$-ary multiplicative subsets of a commutative Krasner $(m,n)$-hyperring $H$  such that $1_H \in S$ and $I$ be a hyperideal of $H$ with $I \cap S^{\prime}=\varnothing$ . If $I$ is an $n$-ary $S$-$N$-hyperideal of $H$, then ${S^{\prime}}^{-1}I$ is an $n$-ary ${S^{\prime}}^{-1}S$-$N$-hyperideal of ${S^{\prime}}^{-1}H$ and ${S^{\prime}}^{-1}I \cap H=E_s$ where $E_s=\{y \in H \ \vert \ g(y,s,1^{(n-2)}) \in I\}$ and $ s $ is an $S$-element  of $I$.
\end{theorem}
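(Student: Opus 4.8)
The plan is to verify the two assertions separately, taking $\frac{s}{1_H} \in {S'}^{-1}S$ as the candidate ${S'}^{-1}S$-element of ${S'}^{-1}I$, where $s$ is a fixed $S$-element of $I$. Throughout I would lean on the fraction-ring membership criterion used in Theorem \ref{113}, namely that $\frac{a}{s'} \in {S'}^{-1}I$ holds precisely when $g(t,a,1_H^{(n-2)}) \in I$ for some $t \in S'$, together with the standard fact that the nilradical commutes with localization, so that $\frac{a}{s'} \in \sqrt{0_{{S'}^{-1}H}}^{(m,n)}$ iff $g(v,a,1_H^{(n-2)}) \in \sqrt{0_H}^{(m,n)}$ for some $v \in S'$. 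Before anything else I would record the auxiliary fact $\sqrt{0_H}^{(m,n)} \cap S' = \varnothing$: if $w \in S' \cap \sqrt{0_H}^{(m,n)}$, then some $g$-power of $w$ (either $g(w^{(t)},1_H^{(n-t)})$ or $g_{(l)}(w^{(t)})$) equals $0$; since $1_H \in S \subseteq S'$ and $S'$ is $n$-ary multiplicative, this power lies in $S'$, forcing $0 \in S'$, which contradicts $0 \in I$ and $I \cap S' = \varnothing$.

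For the first assertion, suppose $G\bigl(\frac{x_1}{s'_1},\dots,\frac{x_n}{s'_n}\bigr) = \frac{g(x_1^n)}{g({s'}_1^n)} \in {S'}^{-1}I$ while $G\bigl(\frac{s}{1_H},\frac{x_i}{s'_i},\frac{1_H}{1_H}^{(n-2)}\bigr) = \frac{g(s,x_i,1_H^{(n-2)})}{s'_i} \notin \sqrt{0_{{S'}^{-1}H}}^{(m,n)}$ for some $i$. The membership criterion yields $t \in S'$ with $g(t,g(x_1^n),1_H^{(n-2)}) \in I$, which I would regroup (exactly as in the proof of Theorem \ref{113}) as $g\bigl(x_i,\,g(x_1^{i-1},t,x_{i+1}^n),\,1_H^{(n-2)}\bigr) \in I$. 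The nilradical criterion applied with $v = 1_H$ gives $g(s,x_i,1_H^{(n-2)}) \notin \sqrt{0_H}^{(m,n)}$, so viewing the last membership as an $n$-fold $g$-product whose first coordinate is $x_i$, the defining property of the $S$-element $s$ replaces $x_i$ by $s$ and gives $g\bigl(s,\,g(x_1^{i-1},t,x_{i+1}^n),\,1_H^{(n-2)}\bigr) \in I$. Commuting $s$ and $t$ rewrites this as $g\bigl(t,\,g(x_1^{i-1},s,x_{i+1}^n),\,1_H^{(n-2)}\bigr) \in I$, and since $t \in S'$ this is exactly the witness showing $G\bigl(\frac{x_1}{s'_1},\dots,\frac{s}{1_H},\dots,\frac{x_n}{s'_n}\bigr) = \frac{g(x_1^{i-1},s,x_{i+1}^n)}{g({s'}_1^{i-1},1_H,{s'}_{i+1}^n)} \in {S'}^{-1}I$, as required.

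For the second assertion I would prove the two inclusions of ${S'}^{-1}I \cap H = E_s$, where $H$ is identified with its image under $y \mapsto \frac{y}{1_H}$. If $y \in E_s$, then $g(s,y,1_H^{(n-2)}) \in I$ with $s \in S \subseteq S'$, so $s$ itself witnesses $\frac{y}{1_H} \in {S'}^{-1}I$; this gives $E_s \subseteq {S'}^{-1}I \cap H$. Conversely, if $\frac{y}{1_H} \in {S'}^{-1}I$, pick $u \in S'$ with $g(u,y,1_H^{(n-2)}) \in I$ and view it as an $n$-fold $g$-product with first coordinate $u$. Here $g(s,u,1_H^{(n-2)}) \in S'$ by multiplicative closure, hence $g(s,u,1_H^{(n-2)}) \notin \sqrt{0_H}^{(m,n)}$ by the auxiliary fact, so the $S$-element property replaces $u$ by $s$ and yields $g(s,y,1_H^{(n-2)}) \in I$, i.e. $y \in E_s$.

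The steps that are merely bookkeeping are the two regroupings of $g$-products and the translation of fraction equalities; the point that genuinely needs care is the interplay with $\sqrt{0}^{(m,n)}$. Concretely, the whole argument hinges on the auxiliary identity $\sqrt{0_H}^{(m,n)} \cap S' = \varnothing$ and on the correct form of the nilradical-of-localization criterion, since these are exactly what convert the non-nilpotency hypothesis in the first assertion and manufacture the non-nilpotency condition needed to invoke the $S$-element property in the second. I would therefore establish those two facts cleanly first, after which both assertions reduce to the associativity and commutativity manipulations indicated above.
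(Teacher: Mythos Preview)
Your proposal is correct and follows essentially the same route as the paper: both take $\frac{s}{1_H}$ as the ${S'}^{-1}S$-element, pass from $G(\frac{x_1}{s_1},\dots,\frac{x_n}{s_n})\in{S'}^{-1}I$ to $g(x_i,g(x_1^{i-1},t,x_{i+1}^n),1_H^{(n-2)})\in I$ via some $t\in S'$, invoke the $S$-$N$ property to swap $x_i$ for $s$, and then read off the desired fraction; the proof of ${S'}^{-1}I\cap H=E_s$ is likewise the same dichotomy argument. The only difference is that you explicitly justify $S'\cap\sqrt{0_H}^{(m,n)}=\varnothing$ and the localization criterion for the nilradical, which the paper uses without proof.
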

\begin{proof}
Let $I$ be an $n$-ary $S$-$N$-hyperideal of $H$. It is easy to see that ${S^{\prime}}^{-1}S \cap {S^{\prime}}^{-1}I = \varnothing$ . Assume that $\frac{s}{1_H} \in {S^{\prime}}^{-1}S$ for some $S$-element $ s $ of $I$. Let $G(\frac{a_1}{s_1},\cdots,\frac{a_n}{s_n}) \in {S^{\prime}}^{-1}I$ for $a_1^n \in H$ and $s_1^n \in S^{\prime}$ such that $G(\frac{s}{1_H},\frac{a_i}{s_i},\frac{1_H}{1_H}^{(n-2)}) \notin \sqrt{0_{{S^{\prime}}^{-1}H}}^{(m,n)}$ for some $1 \leq i \leq n$. So $\frac{g(a_1^n)}{g(s_1^n)}\in {S^{\prime}}^{-1}I$
which follows  there  exists $t \in S^{\prime}$ such that $g(t,g(a_1^n),1_H^{(n-2)}) \in I$ and then $g(a_i,g(a_1^{i-1},t,a_{i+1}^n),1_H^{(n-2)}) \in I$. Since $I$ is an $n$-ary $S$-$N$-hyperideal of $H$ and
$g(s,a_i,1_H^{(n-2)}) \notin \sqrt{0_H}^{(m,n)}$, we get the result that $g(s,g(a_1^{i-1},t,a_{i+1}^n),1_H^{(n-2)}) \in I$ which means $G(\frac{a_1}{s_1},\cdots,\frac{a_{i-1}}{s_{i-1}},\frac{s}{1_H},\frac{a_{i+1}}{s_{i+1}},\cdots,\frac{a_n}{s_n})=\frac{g(a_1^{i-1},s,a_{i+1}^n)}{g(s_1^{i-1},1_H,s_{i+1}^n)}=\frac{g(s,g(a_1^{i-1},t,a_{i+1}^n),1^{(n-2)})}{g(s_1^{i-1},t,s_{i+1}^n)} \in {S^{\prime}}^{-1}I$. Thus ${S^{\prime}}^{-1}I$ is an $n$-ary ${S^{\prime}}^{-1}S$-$N$-hyperideal of ${S^{\prime}}^{-1}H$. For the second assertion, suppose that $x \in {S^{\prime}}^{-1}I \cap H$. Then there exists $a \in I$ such that $\frac{x}{1_H}=\frac{a}{t}$ for some $t \in S^{\prime}$. Therefore there exists $u \in S^{\prime}$ such that $g(u,a,1_H^{(n-2)}) \in I$. Since $I$ is an $n$-ary $S$-$N$-hyperideal of $H$, then there exists $s \in S \subseteq S^{\prime}$ such that we have $g(s,u,1_H^{(n-2)}) \in \sqrt{0_H}^{(m,n)}$ or $g(s,a,1_H^{(n-2)}) \in I$. In the former case, we
have a contradiction since  $S^{\prime} \cap \sqrt{0_H}^{(m,n)}=\varnothing$. Hence $g(s,a,1_H^{(n-2)}) \in I$ which implies $a \in E_s$ which means $ {S^{\prime}}^{-1}I \cap H \subseteq E_s$. 
Since the inclusion $E_s \subseteq {S^{\prime}}^{-1}I \cap H$ holds, we get ${S^{\prime}}^{-1}I \cap H=E_s$.
\end{proof}
We have the following corollary of Theorem \ref{34}.
\begin{corollary}\label{35}
Let $S$ be an $n$-ary multiplicative subset of a commutative Krasner $(m,n)$-hyperring $H$  such that $1_H \in S$ and $I$ be a hyperideal of $H$ with $I \cap S=\varnothing$ . If $I$ is an $n$-ary $S$-$N$-hyperideal of $H$, then $S^{-1}I$ is an $n$-ary $N$-hyperideal of $S^{-1}H$ and $S^{-1}I \cap H=E_s$   where $E_s=\{y \in H \ \vert \ g(y,s,1^{(n-2)}) \in I\}$ and $ s $ is an $S$-element  of $I$.
\end{corollary}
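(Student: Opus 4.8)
The plan is to obtain this corollary by specializing Theorem \ref{34} to the case $S' = S$ and then observing that the resulting notion of an $n$-ary $S^{-1}S$-$N$-hyperideal collapses to that of an ordinary $n$-ary $N$-hyperideal. First I would verify that all the hypotheses of Theorem \ref{34} hold when $S'$ is taken to be $S$ itself: the inclusion $S \subseteq S$ is trivial, the conditions $1_H \in S$ and $I \cap S = \varnothing$ are part of the present hypotheses, and $I$ is assumed to be an $n$-ary $S$-$N$-hyperideal of $H$. Feeding these into Theorem \ref{34} immediately produces both the equality $S^{-1}I \cap H = E_s$, which is exactly the second assertion and requires nothing further, and the statement that $S^{-1}I$ is an $n$-ary $S^{-1}S$-$N$-hyperideal of $S^{-1}H$.

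It then remains to upgrade ``$n$-ary $S^{-1}S$-$N$-hyperideal'' to ``$n$-ary $N$-hyperideal''. The key point is that every element of $S^{-1}S$ is invertible in $S^{-1}H$: since $1_H \in S$, each generator $\frac{s}{1_H}$ with $s \in S$ admits the inverse $\frac{1_H}{s}$, so the $S^{-1}S$-element $u = \frac{s}{1_H}$ supplied by Theorem \ref{34} is a unit. I would then show that multiplication by $u$ neither creates nor destroys membership in the two sets controlling the definition. Because $u$ is invertible, $G(u, a, \frac{1_H}{1_H}^{(n-2)}) \in \sqrt{0_{S^{-1}H}}^{(m,n)}$ if and only if $a \in \sqrt{0_{S^{-1}H}}^{(m,n)}$ (one direction uses that the nilradical absorbs products, the other cancels $u$ via $u^{-1}$), and likewise $G(z_1^{i-1}, u, z_{i+1}^n) \in S^{-1}I$ if and only if $G(z_1^{i-1}, \frac{1_H}{1_H}, z_{i+1}^n) \in S^{-1}I$, since $S^{-1}I$ is a hyperideal and $u$ can be cancelled.

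Substituting these equivalences into the defining condition of an $n$-ary $S^{-1}S$-$N$-hyperideal turns the hypothesis $G(u, z_i, \frac{1_H}{1_H}^{(n-2)}) \notin \sqrt{0_{S^{-1}H}}^{(m,n)}$ into $z_i \notin \sqrt{0_{S^{-1}H}}^{(m,n)}$, and turns the conclusion $G(z_1^{i-1}, u, z_{i+1}^n) \in S^{-1}I$ into $G(z_1^{i-1}, \frac{1_H}{1_H}, z_{i+1}^n) \in S^{-1}I$; these are precisely the hypothesis and conclusion appearing in the definition of an $n$-ary $N$-hyperideal. Hence $S^{-1}I$ is an $n$-ary $N$-hyperideal of $S^{-1}H$, which completes the argument. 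I expect the only delicate step to be the cancellation: one must check carefully, using the invertibility of $u$ together with the fact that the nilradical of $S^{-1}H$ is stable under multiplication and unchanged by units, that the two displayed equivalences genuinely hold in the hyperring $S^{-1}H$; once these are established, the reduction to Theorem \ref{34} is purely formal.
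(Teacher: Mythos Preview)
Your proposal is correct and follows essentially the same approach as the paper: specialize Theorem \ref{34} to $S'=S$, then use that the $S^{-1}S$-element is invertible in $S^{-1}H$ to collapse the $S^{-1}S$-$N$-hyperideal condition to the ordinary $N$-hyperideal condition. The paper's proof is terser (it simply says ``as $\frac{u}{v}$ is invertible'') while you spell out the two cancellation equivalences explicitly, but the strategy is identical.
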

\begin{proof}
Let $I$ be an $n$-ary $S$-$N$-hyperideal of $H$. Then, by Theorem \ref{34}, $S^{-1}I$ is an $n$-ary $S^{-1}S$-N-hyperideal of $S^{-1}H$. Suppose that $G(\frac{a_1}{s_1},\cdots,\frac{a_n}{s_n}) \in S^{-1}I$ for $a_1^n \in H$ and $s_1^n \in S$. There there exist an $S^{-1}S$-element $\frac{u}{v}$ of $S^{-1}I$ such that $G(\frac{u}{v},\frac{a_i}{s_i},\frac{1_H}{1_H}^{(n-2)}) \in \sqrt{0_{S^{-1}H}}^{(m,n)}$  or $G(\frac{a_1}{s_1},\cdots,\frac{a_{i-1}}{s_{i-1}},\frac{u}{v},\frac{a_{i+1}}{s_{i+1}},\cdots,\frac{a_n}{s_n}) \in  S^{-1}I$ for some $1 \leq i \leq n$. Then we get the result that $S^{-1}I$ is an $n$-ary $N$-hyperideal of $S^{-1}H$, as $\frac{u}{v}$ is invertible. For the second assertion, we use a similar argument to that of Theorem \ref{34}.
\end{proof}
From Theorem \ref{34} and Corollary \ref{35}, we can conclude the
following.
\begin{theorem}
Let $S$ be an $n$-ary multiplicative subset of a commutative Krasner $(m,n)$-hyperring $H$  such that $1_H \in S$ and $I$ be a hyperideal of $H$ with $I \cap S=\varnothing$. Then $I$ is an $n$-ary $S$-$N$-hyperideal of $H$ if and only if  $S^{-1}I$ is an $n$-ary $N$-hyperideal of $S^{-1}H$ and $S^{-1}I \cap H=E_s$   and  $S^{-1}\sqrt{0}^{(m,n)} \cap H=F_{s^{\prime}}$ where $E_s=\{y \in H \ \vert \ g(y,s,1^{(n-2)}) \in I\}$ and $F_{s^\prime}=\{z \in H \ \vert \ g(z,s^{\prime},1^{(n-2)}) \in \sqrt{0}^{(m,n)}\}$ for some $s,s^{\prime} \in S$.
\end{theorem}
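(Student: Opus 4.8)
The plan is to read the forward implication off Corollary~\ref{35} (applied both to $I$ and to the nilradical) and to run the localize--contract argument in reverse for the converse. Throughout I will use two standing facts. First, that the nilradical commutes with localization, $S^{-1}\sqrt{0}^{(m,n)}=\sqrt{0_{S^{-1}H}}^{(m,n)}$, which is the special case $\delta=\delta_1$ of $\delta_S(S^{-1}I)=S^{-1}\delta(I)$ recorded before Theorem~\ref{113}. Second, Theorem~\ref{17}, by which a commutative Krasner $(m,n)$-hyperring carries an $n$-ary $N$-hyperideal exactly when its nilradical is $n$-ary prime. I will also repeatedly invoke the $n$-ary bookkeeping identity $g(g(a_1^{i-1},1_H,a_{i+1}^n),b,1_H^{(n-2)})=g(a_1^{i-1},b,a_{i+1}^n)$, which follows from the scalar identity $1_H$ together with the generalized associativity and commutativity of $g$.

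For the forward direction, suppose $I$ is an $n$-ary $S$-$N$-hyperideal with $S$-element $s$. Corollary~\ref{35} immediately gives that $S^{-1}I$ is an $n$-ary $N$-hyperideal of $S^{-1}H$ and that $S^{-1}I\cap H=E_s$, settling the first two clauses. For the third clause I would note that, since $S^{-1}I$ is an $n$-ary $N$-hyperideal of $S^{-1}H$, Theorem~\ref{17} forces $\sqrt{0_{S^{-1}H}}^{(m,n)}=S^{-1}\sqrt{0}^{(m,n)}$ to be $n$-ary prime, hence an $n$-ary $N$-hyperideal by Corollary~\ref{13}. I then argue that $\sqrt{0}^{(m,n)}$ is itself an $n$-ary $S$-$N$-hyperideal of $H$ (using $S\cap\sqrt{0}^{(m,n)}=\varnothing$, since elements of $S$ become units in $S^{-1}H$ and so cannot be nilpotent) and apply Corollary~\ref{35} to it, obtaining an $S$-element $s'$ with $S^{-1}\sqrt{0}^{(m,n)}\cap H=F_{s'}$.

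For the converse, assume the three right-hand conditions hold with $s,s'\in S$, and set $s''=g(s,s',1_H^{(n-2)})$; since $1_H\in S$ and $S$ is $n$-ary multiplicative, $s''\in S$, and this is the candidate $S$-element. Let $g(x_1^n)\in I$ with $g(s'',x_i,1_H^{(n-2)})\notin\sqrt{0}^{(m,n)}$ for some $i$. First I would show $\frac{x_i}{1_H}\notin\sqrt{0_{S^{-1}H}}^{(m,n)}$: otherwise $x_i\in S^{-1}\sqrt{0}^{(m,n)}\cap H=F_{s'}$, so $g(x_i,s',1_H^{(n-2)})\in\sqrt{0}^{(m,n)}$, and multiplying by $s$ inside the hyperideal $\sqrt{0}^{(m,n)}$ yields $g(s'',x_i,1_H^{(n-2)})=g(s,g(x_i,s',1_H^{(n-2)}),1_H^{(n-2)})\in\sqrt{0}^{(m,n)}$, a contradiction. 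Since $\frac{g(x_1^n)}{1_H}\in S^{-1}I$ and $S^{-1}I$ is an $n$-ary $N$-hyperideal with $\frac{x_i}{1_H}\notin\sqrt{0_{S^{-1}H}}^{(m,n)}$, I obtain $\frac{g(x_1^{i-1},1_H,x_{i+1}^n)}{1_H}\in S^{-1}I$, hence $g(x_1^{i-1},1_H,x_{i+1}^n)\in S^{-1}I\cap H=E_s$. By the definition of $E_s$ this says $g(x_1^{i-1},s,x_{i+1}^n)\in I$, and scaling by $s'$ inside $I$ produces $g(x_1^{i-1},s'',x_{i+1}^n)\in I$. Thus $I$ is an $n$-ary $S$-$N$-hyperideal with $S$-element $s''$.

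The main obstacle is the final clause of the forward direction: producing a \emph{single} uniform $s'$ for which the $S$-saturation $S^{-1}\sqrt{0}^{(m,n)}\cap H$ collapses to one colon set $F_{s'}$, equivalently verifying that $\sqrt{0}^{(m,n)}$ is genuinely an $n$-ary $S$-$N$-hyperideal and not merely an ideal with prime localization. I expect to extract such an $s'$ from the $S$-element $s$ of $I$ together with the primeness of $\sqrt{0_{S^{-1}H}}^{(m,n)}$ secured above. The remaining steps are routine once the $n$-ary arithmetic — commuting $s$ and $s'$ into the correct argument slot and absorbing the $1_H$'s via the scalar identity — is controlled by the bookkeeping identity noted at the outset.
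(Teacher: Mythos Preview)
Your converse is correct and coincides with the paper's argument almost verbatim: both set $s''=g(s,s',1_H^{(n-2)})$, push the product into $S^{-1}I$, invoke the $N$-hyperideal property there, and contract through $E_s$ and $F_{s'}$; your contrapositive packaging is just a reorganization of the paper's two-case split.

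The difference is in the forward direction's third clause. The paper does not pass through Theorem~\ref{17} or Corollary~\ref{13}, and it never tries to promote $\sqrt{0}^{(m,n)}$ to an $S$-$N$-hyperideal in its own right so as to feed it back into Corollary~\ref{35}. Instead it simply says the remaining claims follow ``by a similar argument to that of Theorem~\ref{34}'': one reruns the contraction computation of Theorem~\ref{34} with $\sqrt{0}^{(m,n)}$ in place of $I$, but still driven by the $S$-$N$-property of $I$. The mechanism is that the $S$-element $s$ of $I$ already controls the nilradical: from $a=g(a,1_H^{(n-1)})\in I$ and $I\cap S=\varnothing$ one gets $g(s,a,1_H^{(n-2)})\in\sqrt{0}^{(m,n)}$ for every $a\in I$, and combining this with $S\cap\sqrt{0}^{(m,n)}=\varnothing$ lets one trade an arbitrary witness $t\in S$ appearing in $S^{-1}\sqrt{0}^{(m,n)}\cap H$ for the fixed $s$ (iterating the $S$-$N$-condition on powers lying in $I$). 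So the obstacle you flag --- manufacturing a uniform $s'$ --- is handled not by showing $\sqrt{0}^{(m,n)}$ is an $S$-$N$-hyperideal, but by exploiting the $S$-$N$-hypothesis on $I$ directly; your route through Theorem~\ref{17} is a detour that leaves exactly the gap you identified.
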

\begin{proof}
$\Longrightarrow$ Let $I$ be an $n$-ary $S$-$N$-hyperideal of $H$. Then, by Corollary \ref{35} we conclude that $S^{-1}I$ is an $n$-ary $N$-hyperideal of $S^{-1}H$. The rest of the claim follows by a similar argument to that of Theorem \ref{34}.\\
$\Longleftarrow$ Let $S^{-1}I$ be an $n$-ary $N$-hyperideal of $S^{-1}H$, $S^{-1}I \cap H=E_s$   and  $S^{-1}\sqrt{0}^{(m,n)} \cap H=F_{s^{\prime}}$ where $E_s=\{y \in H \ \vert \ g(y,s,1_H^{(n-2)}) \in I\}$ and $F_{s^\prime}=\{z \in H \ \vert \ g(z,s^{\prime},1_H^{(n-2)}) \in \sqrt{0}^{(m,n)}\}$ for some $s,s^{\prime} \in S$. Put $g(s,s^{\prime},1_H^{(n-2)})=s^{\prime \prime}$. Assume that $g(a_1^n) \in I$ for $a_1^n \in H$. This implies that $G(\frac{a_1}{1_H},\cdots,\frac{a_n}{1_H}) \in S^{-1}I$. Then we get the result that either $\frac{a_i}{1_H} \in \sqrt{S^{-1}0}^{(m,n)}=S^{-1}\sqrt{0}^{(m,n)}$ or $G(\frac{a_1}{1_H},\cdots,\frac{a_{i-1}}{1_H},\frac{1_H}{1_H},\frac{a_{i+1}}{1_H},\cdots,\frac{a_n}{1_H}) \in S^{-1}I$ for some $1 \leq i \leq n$. In the first case, we get $g(t,a_i,1_H^{(n-2)}) \in \sqrt{0}^{(m,n)}$ for some $t \in S$ which implies $a_i=\frac{g(t,a_i,1_H^{(n-2)})}{g(t,1_H^{(n-1)})} \in S^{-1}\sqrt{0}^{(m,n)} \cap H=F_{s^{\prime}}$. This means that $g(s^{\prime},a_i,1_H^{(n-2)}) \in \sqrt{0}^{(m,n)}$. Then $g(s^{\prime \prime},a_i,1_H^{(n-2)})=g(g(s,s^{\prime},1_H^{(n-2)}),a_i,1_H^{(n-2)}) \in \sqrt{0}^{(m,n)}$. In the second case, we get $g(a_1^{i-1},t^{\prime},a_{i+1}^n) \in I$ for some $t^{\prime} \in S$. This implies that $g(a_1^{i-1},1_H,a_{i+1}^n)=\frac{g(a_1^{i-1},t^{\prime},a_{i+1}^n)}{g(t^{\prime},1_H^{(n-1)})} \in S^{-1}I \cap H=E_s$. Then we conclude that  $g(a_1^{i-1},s^{\prime \prime},a_{i+1}^n)=g(a_1^{i-1},g(s,s^{\prime},1_H^{(n-2)}),a_{i+1}^n) \in I$. Consequently, $I$ is an $n$-ary $S$-$N$-hyperideal of $H$.
\end{proof}
\begin{theorem} \label{36}
Let $S_1$ and $S_2$ be  $n$-ary multiplicative subsets of commutative Krasner $(m,n)$-hyperrings $(H_1, f_1, g_1)$ and $(H_2, f_2, g_2)$, respectively, and let  $1_{H_1}$ and $1_{H_2}$ be scalar identitis of $H_1$ and $H_2$, respectively.  Then the following statements hold.
\begin{itemize} 
\item[\rm(1)]~ $I_1 \times H_2$ is an $n$-ary $S$-$N$-hyperideal of $H_1 \times H_2$ where $S=S_1 \times S_2$ if and only if $I_1$ is an $n$-ary $S_1$-$N$-hyperideal of $H_1$ and $S_2 \cap \sqrt{0_{H_2}} \neq \varnothing$.
\item[\rm(2)]~ $H_1 \times I_2$ is an $n$-ary $S$-$N$-hyperideal of $H_1 \times H_2$ where $S=S_1 \times S_2$ if and only if $I_2$ is an $n$-ary $S_2$-$N$-hyperideal of $H_2$ and $S_1 \cap \sqrt{0_{H_1}} \neq \varnothing$.
\end{itemize}
\end{theorem}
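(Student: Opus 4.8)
The plan is to prove $(1)$ in full and then obtain $(2)$ by interchanging the two factors, since the hypotheses and conclusions of $(2)$ are exactly those of $(1)$ after swapping the indices $1$ and $2$. Throughout I would use the componentwise description of the nilradical, namely $\sqrt{0_{H_1 \times H_2}}^{(m,n)} = \sqrt{0_{H_1}}^{(m,n)} \times \sqrt{0_{H_2}}^{(m,n)}$; this holds because $g = g_1 \times g_2$ acts coordinatewise and an element $(a,b)$ is nilpotent exactly when both $a$ and $b$ are, where a common exponent can be taken for the two factors.

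For $(\Longleftarrow)$, assume $I_1$ is an $n$-ary $S_1$-$N$-hyperideal with $S_1$-element $s_1$ and fix $s_2 \in S_2 \cap \sqrt{0_{H_2}}^{(m,n)}$. First I would note that $I_1 \times H_2$ is proper and disjoint from $S = S_1 \times S_2$, since $(I_1 \times H_2) \cap S = (I_1 \cap S_1) \times S_2 = \varnothing$. Then I claim $(s_1,s_2)$ is an $S$-element. Suppose $g((x_1,y_1),\dots,(x_n,y_n)) \in I_1 \times H_2$, so $g_1(x_1^n) \in I_1$, and suppose $g((s_1,s_2),(x_i,y_i),(1_{H_1},1_{H_2})^{(n-2)}) \notin \sqrt{0_{H_1 \times H_2}}^{(m,n)}$ for some $i$. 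Because $s_2 \in \sqrt{0_{H_2}}^{(m,n)}$ and the nilradical is a hyperideal, the second coordinate $g_2(s_2,y_i,1_{H_2}^{(n-2)})$ lies in $\sqrt{0_{H_2}}^{(m,n)}$, so the first coordinate $g_1(s_1,x_i,1_{H_1}^{(n-2)})$ must be non-nilpotent. The $S_1$-$N$-property of $I_1$ then gives $g_1(x_1^{i-1},s_1,x_{i+1}^n) \in I_1$, hence $g(x_1^{i-1},(s_1,s_2),x_{i+1}^n)$ has first coordinate in $I_1$ and lies in $I_1 \times H_2$.

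For $(\Longrightarrow)$, let $(s_1,s_2) \in S$ be an $S$-element of $I_1 \times H_2$. Disjointness together with $S_2 \neq \varnothing$ forces $I_1 \cap S_1 = \varnothing$. To see $I_1$ is an $n$-ary $S_1$-$N$-hyperideal with $S_1$-element $s_1$, I would take $x_1^n \in H_1$ with $g_1(x_1^n) \in I_1$ and $g_1(s_1,x_i,1_{H_1}^{(n-2)}) \notin \sqrt{0_{H_1}}^{(m,n)}$, lift to the tuples $(x_j,1_{H_2})$, observe $g((x_1,1_{H_2}),\dots,(x_n,1_{H_2})) = (g_1(x_1^n),1_{H_2}) \in I_1 \times H_2$, note that the $i$-th test element $(g_1(s_1,x_i,1_{H_1}^{(n-2)}),s_2)$ is non-nilpotent since its first coordinate is, and then read off $g_1(x_1^{i-1},s_1,x_{i+1}^n) \in I_1$ from the $S$-element property.

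The crux, and where I expect the real work, is showing $S_2 \cap \sqrt{0_{H_2}}^{(m,n)} \neq \varnothing$. I would argue by contradiction: if $s_2 \notin \sqrt{0_{H_2}}^{(m,n)}$, test the $S$-element property on the tuples with $(0_{H_1},1_{H_2})$ in a single position $i$ and $(1_{H_1},1_{H_2})$ in every other position. Their $g$-product is $(0_{H_1},1_{H_2}) \in I_1 \times H_2$; the $i$-th test element equals $(0_{H_1},s_2)$, which fails to be nilpotent precisely because $s_2$ does; yet inserting $(s_1,s_2)$ in position $i$ produces $(s_1,s_2)$, which lies outside $I_1 \times H_2$ since $s_1 \notin I_1$. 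This violates the $S$-element property, so $s_2 \in \sqrt{0_{H_2}}^{(m,n)}$ and hence $s_2 \in S_2 \cap \sqrt{0_{H_2}}^{(m,n)}$. The delicate point is the choice of test tuples: the zero entry keeps the product inside $I_1 \times H_2$ so that the hypothesis genuinely applies, the non-nilpotency of $s_2$ alone activates it, and the disjointness $s_1 \notin I_1$ is exactly what makes the inserted product escape $I_1 \times H_2$.
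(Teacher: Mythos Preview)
Your proof is correct. The converse direction $(\Longleftarrow)$ and the verification that $I_1$ is an $n$-ary $S_1$-$N$-hyperideal in the forward direction match the paper's approach: both lift $a_1^n \in H_1$ to $(a_j,1_{H_2})$ and read off the first coordinate.

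The genuine difference is in how you obtain $s_2 \in \sqrt{0_{H_2}}^{(m,n)}$. The paper does not isolate this conclusion; instead it takes arbitrary $a_1^n$ with $g_1(a_1^n)\in I_1$, splits into the cases $g_1(a_1^{i-1},s_1,a_{i+1}^n)\notin I_1$ and $g_1(a_1^{i-1},s_1,a_{i+1}^n)\in I_1$, and extracts $s_2\in\sqrt{0_{H_2}}^{(m,n)}$ as a byproduct of the first case (the second coordinate of the nilpotent element produced there is exactly $s_2$). Your argument instead plugs in the concrete tuple with $(0_{H_1},1_{H_2})$ in one slot and identities elsewhere, so that the $i$-th test element is $(0_{H_1},s_2)$ and the inserted product is $(s_1,s_2)$; disjointness $s_1\notin I_1$ then forces $(0_{H_1},s_2)$ to be nilpotent, i.e.\ $s_2\in\sqrt{0_{H_2}}^{(m,n)}$. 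This is a cleaner and more self-contained route: it decouples the nilpotency of $s_2$ from the verification of the $S_1$-$N$-property and avoids the case split, at the cost of one extra explicit computation.
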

\begin{proof}
$\Longrightarrow$ Let $I_1 \times H_2$ be an $n$-ary $S$-$N$-hyperideal of $H_1 \times H_2$ where $S=S_1 \times S_2$ and let $(s_1,s_2)$ be an $S$-element of $I_1 \times H_2$. Suppose that $g_1(a_1^n) \in I_1$ for some $a_1^n \in H_1$. So we consider the following cases.\\
Case {\bf 1}: Let $g_1(a_1^{i-1},s_1,a_{i+1}^n) \notin I_1$. Then we get $g_1 \times g_2((a_1,1_{H_2}),\cdots,(a_n,1_{H_2})) \in I_1 \times H_2$ but $g_1 \times g_2((a_1,1_{H_2}),\cdots,(a_{i-1},1_{H_2}),(s_1,s_2),(a_{i+1},1_{H_2}),\cdots(a_n,1_{H_2})) \notin I_1 \times H_2$. Then we get the result that $g_1 \times g_2((s_1,s_2),(a_i,1_{H_2}),(1_{H_1},1_{H_2})^{(n-2)}) \in \sqrt{0_{H_1 \times H_2}}=\sqrt{0_{H_1}} \times \sqrt{0_{H_2}}$, as $I_1 \times H_2$ is an $n$-ary $S$-$N$-hyperideal of $H_1 \times H_2$. Hence $g(s_1,a_i,1_{H_1}^{(n-2)}) \in \sqrt{0_{H_1}}$ and $s_2 \in S_2 \cap \sqrt{0_{H_2}}$, as needed.\\
Case {\bf 2}: In this case we suppose that  $g_1(a_1^{i-1},s_1,a_{i+1}^n) \in I_1$. Then we have $g_1 \times g_2((a_1,1_{H_2}),\cdots,(a_{i-1},1_{H_2}),(s_1,s_2),(a_{i+1},1_{H_2}),\cdots(a_n,1_{H_2})) \in I_1 \times H_2$. So we get the result that $g_1 \times g_2((a_1,1_{H_2}),\cdots,(a_{i-1},1_{H_2}),(s_1,s_2),(a_{i+1},1_{H_2}),\cdots(a_n,1_{H_2})) \in \sqrt{0_{H_1 \times H_2}}=\sqrt{0_{H_1}} \times \sqrt{0_{H_2}}$, since $g_1 \times g_2((s_1,s_2)^{(2)},(1_{H_1},1_{H_2})^{(n-2)}) \notin I_1 \times H_2$.\\
$\Longleftarrow$  Let $I_1$ be an $n$-ary $S_1$-$N$-hyperideal of $H_1$ and let $s_1$ be an $S_1$-element of $I_1$ and $s_2 \in S_2 \cap \sqrt{0_{H_2}}$. Now, assume that $g_1 \times g_2((a_1,b_1), \cdots, (a_n,b_n)) \in I_1 \times H_2$ for some $a_1^n \in H_1$ and $b_1^n \in H_2$. Then we have $g_1(a_1^n) \in I_1$. Hence we get either  $g_1(s_1,a_i,1_H^{(n-2)}) \in \sqrt{0_{H_1}}$ for some $1 \leq i \leq n$ or $g(a_1^{i-1},s_1,a_{i+1}^n) \in I_1$. Therefore $g_1 \times g_2((s_1,1_{H_2}),(a_i,b_i),(1_{H_1},1_{H_2})^{(n-2)}) \in \sqrt{0_{H_1}} \times \sqrt{0_{H_2}}$ or $g_1 \times g_2((a_1,b_1),\cdots,(a_{i-1},b_{i-1}),(s_1,s_2),(a_{i+1},b_{i+1}),\cdots,(a_n,b_n)) \in I_1 \times H_2$. So $(s_1,s_2)$ is an $S$-element of $I_1 \times H_2$.

(2) The proof is similar to (1).
\end{proof}
\begin{theorem} \label{37}
Let $S_1$ and $S_2$ be  $n$-ary multiplicative subsets of commutative Krasner $(m,n)$-hyperrings $(H_1, f_1, g_1)$ and $(H_2, f_2, g_2)$, respectively, and let  $I_1$ and $I_2$ be proper hyperideals of $H_1$ and $H_2$, respectively.  If one of the following cases  holds, 
\begin{itemize} 
\item[\rm(1)]~  $\sqrt{0_{H_2}}^{(m,n)} \cap S_2 \neq \varnothing$ and $I_1$ is an $n$-ary $S_1$-$N$-hyperideal of $H_1$.
\item[\rm(2)]~   $\sqrt{0_{H_1}}^{(m,n)} \cap S_1 \neq \varnothing$ and $I_2$ is an $n$-ary $S_2$-$N$-hyperideal of $H_2$.
\end{itemize}
then $I_1 \times I_2$ is an $n$-ary $S$-$N$-hyperideal of $H_1 \times H_2$ where $S=S_1 \times S_2$.
\end{theorem}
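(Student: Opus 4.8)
The plan is to prove case (1) in detail; case (2) follows by interchanging the roles of the two factors. Throughout I write $S = S_1 \times S_2$ and use the identity $\sqrt{0_{H_1 \times H_2}}^{(m,n)} = \sqrt{0_{H_1}}^{(m,n)} \times \sqrt{0_{H_2}}^{(m,n)}$ already exploited in Theorem \ref{111} and Theorem \ref{36}. First I would record that $I_1 \times I_2$ is disjoint from $S$: since $I_1$ is an $n$-ary $S_1$-$N$-hyperideal we have $I_1 \cap S_1 = \varnothing$, and $(I_1 \times I_2) \cap (S_1 \times S_2) = (I_1 \cap S_1) \times (I_2 \cap S_2) = \varnothing$. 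Note that $I_2 \cap S_2$ is allowed to be non-empty; only $I_1 \cap S_1 = \varnothing$ is needed. So $I_1 \times I_2$ is an admissible candidate.

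The decisive observation is that the obvious candidate $S$-element $(s_1, s_2)$, with $s_1$ an $S_1$-element of $I_1$ and $s_2 \in S_2 \cap \sqrt{0_{H_2}}^{(m,n)}$, is too weak: in the conclusion $g_1 \times g_2((a_1,b_1),\dots,(s_1,s_2),\dots,(a_n,b_n)) = (g_1(a_1^{i-1},s_1,a_{i+1}^n),\, g_2(b_1^{i-1},s_2,b_{i+1}^n))$ the first coordinate can be forced into $I_1$ by the $S_1$-$N$-property, but the second coordinate $g_2(b_1^{i-1},s_2,b_{i+1}^n)$ lands only in $\sqrt{0_{H_2}}^{(m,n)}$, which need not sit inside $I_2$. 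Hence I would instead take the $S$-element to be $(s_1, 0_{H_2})$. The key point that makes this legitimate is that $0_{H_2} \in S_2$: since $s_2 \in \sqrt{0_{H_2}}^{(m,n)}$, some pure iterated $g$-product of copies of $s_2$ equals $0_{H_2}$ — directly when $g_{(l)}(s_2^{(t)}) = 0$, and when $g(s_2^{(t)},1_{H_2}^{(n-t)})=0$ with $t \le n$ a short computation using associativity, commutativity and the fact that $0$ is absorbing for $g$ shows $g(s_2^{(n)}) = 0$ as well — and this product lies in $S_2$ because $S_2$ is $n$-ary multiplicative and contains $s_2$. In particular $(s_1,0_{H_2}) \in S_1 \times S_2 = S$.

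With $(s_1, 0_{H_2})$ fixed, I would verify the defining implication. Suppose $g_1 \times g_2((a_1,b_1),\dots,(a_n,b_n)) \in I_1 \times I_2$, so that $g_1(a_1^n) \in I_1$ and $g_2(b_1^n) \in I_2$, and suppose that for some $i$ we have $g_1 \times g_2((s_1,0_{H_2}),(a_i,b_i),(1_{H_1},1_{H_2})^{(n-2)}) \notin \sqrt{0_{H_1}}^{(m,n)} \times \sqrt{0_{H_2}}^{(m,n)}$. Because the second coordinate of this element is $g_2(0_{H_2},b_i,1_{H_2}^{(n-2)}) = 0_{H_2} \in \sqrt{0_{H_2}}^{(m,n)}$, non-membership forces $g_1(s_1,a_i,1_{H_1}^{(n-2)}) \notin \sqrt{0_{H_1}}^{(m,n)}$. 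Now the $S_1$-$N$-property of $I_1$ (with $S_1$-element $s_1$) applied to $g_1(a_1^n) \in I_1$ yields $g_1(a_1^{i-1},s_1,a_{i+1}^n) \in I_1$, while the second coordinate $g_2(b_1^{i-1},0_{H_2},b_{i+1}^n) = 0_{H_2} \in I_2$ is automatic. Therefore $g_1 \times g_2((a_1,b_1),\dots,(s_1,0_{H_2}),\dots,(a_n,b_n)) \in I_1 \times I_2$, which is exactly the required conclusion, so $(s_1,0_{H_2})$ is an $S$-element and $I_1 \times I_2$ is an $n$-ary $S$-$N$-hyperideal.

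The main obstacle is precisely the second-coordinate issue flagged above: unlike Theorem \ref{36}, where one factor is the whole hyperring and therefore automatically absorbs the product, here both factors are proper, so a generic element of $S_2 \cap \sqrt{0_{H_2}}^{(m,n)}$ will not push $g_2(b_1^{i-1},s_2,b_{i+1}^n)$ into $I_2$. Recognising that the hypothesis in fact guarantees $0_{H_2} \in S_2$, and then using $0_{H_2}$ rather than an arbitrary nilpotent as the second component of the $S$-element, is the crux; everything else is a routine check against the definitions.
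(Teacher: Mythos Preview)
Your proof is correct and follows essentially the same route as the paper: both arguments take $(s_1,0_{H_2})$ as the $S$-element, verify the second coordinate automatically via the absorbing property of $0_{H_2}$, and reduce the first coordinate to the $S_1$-$N$-hyperideal property of $I_1$. Your write-up is in fact more careful than the paper's, which simply asserts $0_{H_2}\in S_2$ without spelling out the nilpotency-plus-multiplicativity argument you give.
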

\begin{proof}
Let $\sqrt{0_{H_2}}^{(m,n)} \cap S_2 \neq \varnothing$ and $I_1$ be an $n$-ary $S_1$-$N$-hyperideal of $H_1$. So $0_{H_2} \in I_2 \cap S_2 \neq \varnothing$ and $I_1 \cap S_1 =\varnothing$. Suppose that $g_1 \times g_2((a_1,b_1),\cdots,(a_n,b_n)) \in I_1 \times I_2$ for some $a_1^n \in H_1$ and $b_1^n \in H_2$ and $s_1$ is an $S_1$-element of $I_1$. Hence we have $g_1(a_1^n) \in I_1$. Then either $g_1(s_1,a_i,1_{H_1}^{(n-2)}) \in \sqrt{0_{H_1}}^{(m,n)}$ for some $1 \leq i \leq n$ or $g_1(a_1^{i-1},s_1,a_{i+1}^n) \in I_1$. So  $g_1 \times g_2((s_1,0),(a_i,b_i),(1_{H_1},1_{H_2})^{(n-2)}) \in \sqrt{0_{H_1}}^{(m,n)} \times \sqrt{0_{H_2}}^{(m,n)}$ or $g_1 \times g_2((a_1,b_1),\cdots,(a_{i-1},b_{i-1}),(s_1,0),(a_{i+1},b_{i+1}),\cdots,(a_n,b_n)) \in I_1 \times I_2$. Thus $(s_1,0_{H_2})$ is an $S$-element of $I_1 \times I_2$.\\
Also, if Case 2 holds, then by using a similar argument, we can show that  $I_1 \times I_2$ is an $n$-ary $S$-$N$-hyperideal of $H_1 \times H_2$ where $S=S_1 \times S_2$.
\end{proof}


\end{document}